\RequirePackage{snapshot}
\documentclass[leqno]{siamltex}

\usepackage{siampaper}
\usepackage{braket}
\usepackage{enumerate} %

\raggedbottom
\newcommand{\polar}[1]{#1^\circ}

\DeclareMathOperator{\vspan}{span}

\pdfinfo{/Author (MP Friedlander, I Mac\^edo, TK Pong)
         /Title (Insert Title Here)
         /Keywords (keyword1, keyword2, keyword3)}

\title{%
  Gauge optimization and duality%
  \thanks{Research partially supported by NSERC Discovery grant 312104, and
    NSERC Collaborative Research and Development grant 375142-08.}%
}
\markboth{M. P. FRIEDLANDER, I. MAC\^EDO, and T. K. PONG}
         {GAUGE OPTIMIZATION AND DUALITY}
\author{
  Michael P. Friedlander%
  \thanks{%
    Department of Computer Science,
    University of British Columbia,
    Vancouver, BC, Canada.
    E-mail: \texttt{\{mpf,ijamj,tkpong\}@cs.ubc.ca}.
    MPF was also
    supported as a visiting scholar at the Simons Institute on
    Theoretical Computer Science at UC Berkeley; TKP was also supported
    as a PIMS Postdoctoral Fellow.
    \hfill\bf October 9, 2013; revised March 19, 2014
  }
  \and
  Ives Mac\^edo$^{\dagger}$\!\!%
  \and
  Ting Kei Pong$^{\dagger}$%
}

\begin{document}

\maketitle

\thispagestyle{plain}
\pagestyle{myheadings}

\begin{abstract}
  Gauge functions significantly generalize the notion of a norm, and
  gauge optimization, as defined by \cite{freund:1987}, seeks the
  element of a convex set that is minimal with respect to a gauge
  function. This conceptually simple problem can be used to model a
  remarkable array of useful problems, including a special case of
  conic optimization, and related problems that arise in machine
  learning and signal processing. The gauge structure of these
  problems allows for a special kind of duality framework. This paper
  explores the duality framework proposed by \citeauthor{freund:1987},
  and proposes a particular form of the problem that exposes some
  useful properties of the gauge optimization framework (such as the
  variational properties of its value function), and yet maintains most
  of the generality of the abstract form of gauge optimization.
\end{abstract}
\begin{keywords}
  gauges, duality, convex optimization, nonsmooth optimization
\end{keywords}
\begin{AMS}
  90C15, 90C25
\end{AMS}

\section{Introduction}

One approach to solving linear inverse problems is to optimize a
regularization function over the set of admissible deviations between
the observations and the forward model. Although regularization
functions come in a wide range of forms depending on the particular
application, they often share some common properties.  The aim of this
paper is to describe the class of gauge optimization problems, which
neatly captures a wide variety of regularization formulations that
arise in fields such as machine learning and signal processing. We
explore the duality and variational properties particular to this
family of problems.

All of the problems that we consider can be expressed as
\begin{equation*}
  \label{g-primal}\tag{P}
  \minimize{x\in\Xscr} \quad \kappa(x) \textt{subject to} x\in\Cscr,
\end{equation*}
where $\Xscr$ is a finite-dimensional Euclidean space,
$\Cscr\subseteq\Xscr$ is a closed convex set, and
$\kappa:\Xscr\to\R\cup\{+\infty\}$ is a \emph{gauge} function, i.e., a
nonnegative, positively homogeneous convex function that vanishes at
the origin. (We assume that $0\notin \Cscr$, since otherwise the
origin is trivially a solution of the problem.) This class of problems
admits a duality relationship that is different from Lagrange duality,
and is founded on the gauge structure of its objective. Indeed,
\cite{freund:1987} defines the dual counterpart
\begin{equation*}
  \label{g-dual}\tag{D}
  \minimize{y\in\Xscr} \quad \kappa^\circ(y) \textt{subject to} y\in\Cscr',
\end{equation*}
where the set
\begin{equation}\label{eq:1}
  \Cscr'=\set{y|\innerp{y}{x}\ge 1\text{\ for all\ }x\in\Cscr}
\end{equation}
is the antipolar of $\Cscr$ (in contrast to the better-known polar of a convex
set), and the polar $\polar\kappa$ (also a gauge) is the function that best
satisfies the Cauchy-Schwartz-like inequality most tightly:
\begin{equation}\label{eq:4}
  \innerp{x}{y} \le \kappa(x)\,\kappa^{\circ}(y),
  \qquad \forall x\in\dom\kappa,\ \forall y\in\dom\kappa^{\circ};
\end{equation}
see \eqref{eq:polardef} for the precise definition.
It follows directly from this inequality and the definition of
$\Cscr'$ that all primal-dual feasible pairs $(x,y)$ satisfy the
weak-duality relationship
\begin{equation}\label{eq:9}
  1 \le \kappa(x)\,\polar\kappa(y),
  \qquad
  \forall x\in\Cscr\cap \dom\kappa,\ \forall y\in\Cscr'\cap\dom\kappa^\circ.
\end{equation}
This duality relationship stands in contrast to the more usual
Lagrange framework, where the primal and dual objective values bound
each other in an additive sense.

\subsection{A roadmap}

Freund's analysis of gauge duality is mainly concerned with
specialized linear and quadratic problems that fit into the gauge
framework, and with the pair of abstract problems \eqref{g-primal} and
\eqref{g-dual}. Our treatment in this paper considers the particular
formulation of
\eqref{g-primal} given by
\begin{equation*}
  \label{eq:7}
  \tag{P\!$_{\smash\rho}$}
  \minimize{x\in \Xscr} \quad \kappa(x) \quad\st\quad \rho(b-Ax)\le\sigma,
\end{equation*}
where $\rho$ is also a gauge. Typical applications might use $\rho$ to
measure the mismatch between the model $Ax$ and the measurements $b$,
and in that case, it is natural to assume that $\rho$ vanishes only at
the origin, so that the constraint reduces to $Ax=b$ when $\sigma=0$. This formulation
is only very slightly less general than \eqref{g-primal} because any
closed convex set can be represented as $\set{x|\rho(b-x)\le1}$ for
some vector $b$ and gauge $\rho$; cf.~\S\ref{sec:gauges}. However, it
is sufficiently concrete that it allows us to develop a calculus for
computing gauge duals for a wide range of existing problems. (Conic
side constraints and a linear map in the objective can be easily
accommodated; this is covered in~\S\ref{sec:extensions}.)

The special structure of the functions in the gauge
program~\eqref{eq:7} leads to a duality framework that is analogous to
the classical Lagrange-duality framework.  The gauge dual program
of~\eqref{eq:7} is
\begin{equation*}
  \label{g-dual2}
  \tag{D\!$_{\smash\rho}$}
  \minimize{ y\in \Xscr}\quad \polar\kappa(A^*y)
  \quad\st\quad
  \innerp y b - \sigma\polar\rho(y) \ge 1,
\end{equation*}
which bears a striking similarity to the Lagrange dual problem
\begin{equation*}
  \label{l-dual}
  \tag{D$_{\smash\ell}$}
  \maximize{ y\in \Xscr}\quad
  \langle y,b\rangle-\sigma\rho^\circ( y)
  \quad\st\quad
  \kappa^\circ(A^* y)\leq1.
\end{equation*}
Note that the objective and constraints between the two duals play
different roles. (These two duals are derived in \S\ref{sec:duality}
under suitable assumptions.)  A significant practical difference
between these two formulations is when $\rho$ is a simple Euclidean
norm and $\kappa$ is a more complicated function (such as one
described by Example~\ref{sec1:ex2} below). The result is that the
Lagrange dual optimizes a ``simple'' objective function over a
potentially ``complicated'' constraint; in contrast, the situation is
reversed in the gauge optimization formulation.

We develop in \S\ref{sec:antipolar} an antipolar calculus for
computing the antipolars of sets such as
$\set{x|\rho(b-Ax)\le\sigma}$, which corresponds to the constraint in
our canonical formulation~\eqref{eq:7}. This calculus is applied in
\S\ref{sec:duality} to derive the gauge dual~\eqref{g-dual2}.

The formal properties of the polar and antipolar operations are
described in~\S\S\ref{sec:background}--\ref{sec:antipolar}.
In~\S\ref{sec:strong-duality} we develop conditions sufficient for
strong duality, i.e., for there to exist a primal-dual pair that
satisfies~\eqref{eq:9} with equality. Our derivation parts with the
``ray-like'' assumption used by Freund, and in certain cases further
relaxes the required assumptions by leveraging connections with
established results from Fenchel duality.

\subsection{Examples}

The following examples illustrate the versatility of the gauge
optimization formulation.
\begin{example}[Norms and minimum-length solutions]
  Norms are special cases of gauge functions that are finite
  everywhere, symmetric, and zero only at the origin. (Semi-norms drop
  the last requirement, and allow the function to be zero at other
  points.) Let $\kappa(x) = \norm{x}$ be any norm, and
  $\Cscr=\set{x|Ax=b}$ describe the solutions to an underdetermined
  linear system. Then~\eqref{g-primal} yields a minimum-length
  solution to the linear system $Ax=b$. This problem can be modeled as
  an instance of \eqref{eq:7} by letting $\rho$ be any gauge
  function for which $\rho^{-1}(0)=\set0$ and setting $\sigma=0$. The polar
  $\polar\kappa = \norm{\cdot}\subd$ is the norm dual to
  $\norm{\cdot}$, and $\Cscr'=\set{A^{*} y | \innerp{b}{y}\ge 1}$; cf.
  Corollary~\ref{cor:rho-axb-antipolar}. The corresponding gauge
  dual~\eqref{g-dual} is then
  \[
  \minim_{y\in\Xscr}\quad\norm{A^*y}\subd\quad\st\quad\innerp{b}{y}\ge1.
  \]
  \vspace{-\baselineskip}
\end{example}

\begin{example}[Sparse optimization and atomic norms]\label{sec1:ex2}
  In his thesis, \cite{Berg:2009} describes a framework for sparse
  optimization based on the formulation where $\kappa$ is a gauge, and
  the function $\rho$ is differentiable away from the origin. The
  nonnegative regularization parameter $\sigma$ influences the degree
  to which the linear model $Ax$ fits the observations $b$. This
  formulation is specialized by \cite{BergFriedlander:2011} to the
  particular case in which $\rho$ is the 2-norm. In that case,
  $\Cscr=\set{x|\norm{Ax-b}_{2}\le\sigma}$ and
  \[
  \Cscr'=\set{A^{*} y | \innerp{b}{y} - \sigma\norm{y}_{2}\ge1};
  \]
  cf. Corollary~\ref{cor:rho-antipolar}. \cite{TeuS:2013} consider a
  related case where the misfit between the model and the observations
  is measured by the Kullback-Leibler divergence.

  \cite{Chan:2012} describe how to construct regularizers that
  generalize the notion of sparsity in linear inverse problems. In
  particular, they define the gauge
  \begin{equation}\label{eq:3}
  \norm{x}_{\Ascr} := \inf\set{ \lambda\ge0 \mid x\in\lambda\conv\Ascr}
  \end{equation}
  over the convex hull of the set of canonical atoms given by the set $\Ascr$.
  If $0\in \interior\conv\Ascr$ and $\Ascr$ is bounded and symmetric,
  i.e., $\Ascr=-\Ascr$, then the definition~\eqref{eq:3} yields a
  norm. For example, if $\Ascr$ consists of the set of unit
  $n$-vectors that contain a single nonzero element, then~\eqref{eq:3}
  is the 1-norm; if $\Ascr$ consists of the set of rank-1 matrices
  with unit spectral norm, then~\eqref{eq:3} is the Schatten
  1-norm. The polar $\polar\kappa(y) =
  \sup\set{\innerp{y}{a}|a\in\conv(\{0\}\cup\Ascr)}$ is the support
  function of the closure of
  $\conv(\{0\}\cup\Ascr)$. \cite{jaggi:2013} catalogs various sets of
  atoms that yield commonly used gauges in machine learning.
\end{example}

\begin{example}[Conic gauge optimization] \label{example:conic-gauge}
  In this example we demonstrate that it is possible to cast any
  convex conic optimization problem in the gauge framework. Let
  $\Kscr$ be a closed convex cone, and let $\Kscr^{*}$ denote its
  dual. Consider the primal-dual pair of feasible conic problems:
  \begin{subequations}
  \begin{alignat}{4}
    \label{eq:2}
      &\minimize{x} \quad&\innerp{c}{x} &\quad\st&\quad Ax = b,\ x&\in\Kscr,
    \\&\maximize{y} \quad&\innerp{b}{y} &\quad\st&\quad   c-A^*y  &\in\Kscr^*.
  \end{alignat}
  \end{subequations}
  Suppose that $\yhat$ is a dual-feasible point, and define
  $\chat=c-A^*\yhat$. Because $\chat\in\Kscr^*$, it follows that
  $\innerp{\chat}{x}\ge0$ for all $x\in\Kscr$. In particular, the
  primal problem can be equivalently formulated as a gauge
  optimization problem by defining
  \begin{equation}\label{eq:8}
    \kappa(x) = \innerp{\chat}{x} + \indicator{\Kscr}(x)
    \textt{and}
    \Cscr=\set{x \mid Ax=b},
  \end{equation}
  where $\delta_{\Kscr}$ is the indicator function on the set
  $\Kscr$. (More generally, it is evident that any function of the
  form $\gamma + \delta_{\Kscr}$ is a gauge if $\gamma$ is a gauge.)
  This formulation is a generalization of the nonnegative linear
  program discussed by Freund, and we refer to it as conic gauge
  optimization. The generalization captures some important problem
  classes, such as trace minimization of positive semidefinite (PSD)
  matrices, which arises in the phase-retrieval problem
  \citep{candes2012phaselift}. This is an example where $c\in\Kscr^*$,
  in which case the dual-feasible point $\yhat=0$ is trivially
  available for the gauge reformulation; cf.~\S\ref{sec:SDP-gauge}.

  A concrete example of the simple case where $c\in\Kscr^*$ is the
  semidefinite programming relaxation of the max-cut problem studied by
  \cite{Goemans:1995}. Let $G=(V,E)$ be an undirected graph, and
  $D=\diag\big((d_v)_{v\in V}\big)$, where $d_v$ denotes the degree of
  vertex $v\in V.$ The max-cut problem can be formulated as
  \begin{equation*}
    \label{eq:maxcut}
    \maximize{x} \quad \tfrac{1}{4}\innerp{D-A}{xx^{T}}
    \quad\st\quad
    x\in\{-1,1\}^V,
  \end{equation*}
  where $A$ denotes the adjacency matrix associated with $G.$ The
  semidefinite programming relaxation for this problem is derived by
  ``lifting'' $xx\T$ into a PSD matrix:
  \begin{equation*}
    \label{eq:maxcut:sdp:max}
    \maximize{X} \quad \tfrac{1}{4}\innerp{D-A}{X}
    \quad\st\quad \diag X=e,\ X\succeq0,
  \end{equation*}
  where $e$ is the vector of all ones, and $X\succeq 0$ denotes the
  the PSD constraint on $X$.
  The constraint $\diag X=e$ implies that $\innerp{D}{X}=\sum_{v\in
    V}d_v=2|E|$ is constant. Thus, the optimal value is equal to
  \begin{equation}
    \label{eq:maxcut:sdp:min}
    |E| - \tfrac{1}{4}\cdot
    \min_{X}\Set{\innerp{D+A}{X}|\diag X=e,\ X\succeq0},
  \end{equation}
  and the solution can be obtained by solving this latter problem.
  Note that $D+A$ is PSD because it has nonnegative
  diagonals and is diagonally dominant. (In fact, it is possible to
  reduce the problem in linear time to one where $D+A$ is positive definite
  by identifying its bipartite connected components.)
  Because the dual of the cone of PSD matrices is itself,
  and the trace inner product between PSD matrices is nonnegative,
  \eqref{eq:maxcut:sdp:min} falls into the class of conic gauge
  problems defined by~\eqref{eq:2}.
\end{example}

\begin{example}[Submodular functions]
  Let $V=\set{1,\ldots,n}$, and consider the set-function
  $f:2^{V}\to\R$, where $f(\emptyset)=0$. The \cite{Lovasz:1983}
  extension $\fhat:\R^{n}\to\R$ of $f$ is given by
  \[
  \fhat(x) = \sum_{k=1}^{n}x_{j_{k}}
  \big[
  f(\set{j_{1},\ldots,j_{k}}) - f(\set{j_{1},\ldots,j\km1})
  \big],
  \]
  where $x_{j_{1}}\ge x_{j_{2}}\ge\cdots\ge x_{j_{n}}$ are the sorted
  elements of $x$. Clearly, the extension is positively homogeneous
  and vanishes at the origin. As shown by Lov\'asz, the extension is
  convex if and only if $f$ is \emph{submodular}, i.e.,
  \[
  f(A) + f(B) \ge f(A\cup B) + f(A\cap B) \textt{for all} A,B\subset V;
  \]
  see also \cite[Proposition~2.3]{bach2011learning}. If $f$ is
  additionally non-decreasing, i.e.,
  \[
  A, B \subset V\text{ and }  A\subset B \quad\Longrightarrow\quad f(A)\le f(B),
  \]
  then the extension is nonnegative over $\R^n_+$. Thus, when $f$ is a
  submodular and non-decreasing set function, that function plus the
  indicator on the nonnegative orthant, i.e., $\fhat +
  \delta_{\R^n_+}$, is a gauge. \cite{bach2011learning} surveys the
  properties of submodular functions and their application in machine
  learning; see Proposition~3.7 therein.
\end{example}

\section{Background and notation} \label{sec:background}

In this section we review known facts about polar sets, gauges and
their polars, and introduce results that are useful for our subsequent
analysis. We mainly follow \citet{Roc70}: see~\S14 in that text for a
discussion of polarity operations on convex sets, and~\S15 for a
discussion of gauge functions and their corresponding polarity operations.

We use the following notation throughout. For a closed convex set
$\Dscr$, let $\Dscr_\infty$ denote the recession cone of $\Dscr$
\citep[Definition 2.1.2]{AuT03}, and $\ri\Dscr$ and $\cl \Dscr$ denote,
respectively, the relative interior and the closure of $\Dscr$.  The
indicator function of the set $\Dscr$ is denoted by $\delta_\Dscr$.

For a gauge $\kappa:\Xscr\to\R\cup\set\infty$, its domain is denoted
by $\dom \kappa = \set{x|\kappa(x)<\infty}$, and its epigraph is
denoted by $\epi\kappa = \set{(x,\mu)|\kappa(x)\le \mu}$.  A function
is called closed if its epigraph is closed, which is equivalent to the
function being lower semi-continuous \citep[Theorem~7.1]{Roc70}.  Let
$\cl \kappa$ denote the gauge whose epigraph is $\cl\epi\kappa$, which
is the largest lower semi-continuous function smaller than $\kappa$
\citep[p.~52]{Roc70}. Finally, for any $x\in \dom \kappa$, the
subdifferential of $\kappa$ at $x$ is denoted $\partial \kappa(x) =
\set{y|\kappa(u) -\kappa(x)\ge \innerp{y}{u-x},\ \forall u}$.

We make the following blanket assumptions throughout. The set $\Cscr$
is a nonempty closed convex set that does not contain the origin;
the set $\Dscr$ is a nonempty convex set that may or may not
contain the origin, depending on the context. The gauge function
$\rho:\Xscr\to\R\cup\set\infty$, used in \eqref{eq:7}, is closed; when
$\sigma=0$, we additionally assume that $\rho^{-1}(0)=\set0$.

\subsection{Polar sets}\label{sec:polar}

The polar of a nonempty closed convex set $\Dscr$ is defined as
\begin{equation*}\label{sec3:polardef}
  \polar\Dscr := \set{y | \innerp{x}{y}\le 1, \ \forall x\in \Dscr},
\end{equation*}
which is necessarily closed convex, and contains the origin. The bipolar theorem states that if $\Dscr$ is closed, then it contains the origin if and
only if $\Dscr = \Dscr^{\circ\circ}$ \citep[Theorem~14.5]{Roc70}.

When $\Dscr = \Kscr$ is a closed convex cone, the polar is
equivalently given by
\begin{equation*}\label{sec3:polardef_cone}
  \polar\Kscr := \set{y |  \innerp{x}{y}\le 0, \ \forall x\in \Kscr}.
\end{equation*}
The positive polar cone (also known as the dual cone) of $\Dscr$ is
given by
\begin{equation*}\label{sec3:dualdef_cone}
  \Dscr^* := \set{y | \innerp{x}{y}\ge 0, \ \forall x\in \Dscr}.
\end{equation*}
The polar and positive polar are related via the closure of the conic
hull, i.e.,
\[
  \Dscr^*
  =  (\cl\cone\Dscr)^*
  = -(\cl\cone\Dscr)^\circ,
  \textt{where}
  \cone\Dscr := \bigcup_{\lambda\ge 0}\lambda \Dscr.
\]

\subsection{Gauge functions} \label{sec:gauges}

All gauges can be represented in the form of a Minkowski
function $\gamma_{\Dscr}$ of some nonempty convex set $\Dscr$, i.e.,
\begin{equation}\label{eq:11}
  \kappa(x) = \gamma_\Dscr(x)
  := \inf\set{\lambda\ge 0 | x\in \lambda \Dscr}.
\end{equation}
In particular, one can always choose $\Dscr=\set{x | \kappa(x)\le 1}$,
and the above representation holds.  The polar of the
gauge $\kappa$ is defined by
\begin{equation}\label{eq:polardef}
  \polar\kappa(y):=
  \inf\set{\mu>0 | \innerp{x}{y}\le \mu \kappa(x),\ \forall x},
\end{equation}
which explains the inequality~\eqref{eq:4}. Because $\kappa$ is a proper
convex function, one can also define its convex conjugate:
\begin{equation} \label{eq:10}
  \kappa^*(y) := \sup_x\set{\innerp{x}{y} - \kappa(x)}.
\end{equation}
It is well known that $\kappa^*$ is a proper closed convex function
\cite[Theorem~12.2]{Roc70}.  The following proposition collects properties
that relate the polar and conjugate of a gauge.

\begin{proposition}\label{sec2:prop1}
  For the gauge $\kappa:\Xscr\to\R\cup\set\infty$, it holds that
  \begin{enumerate}[{\rm(i)}]
  \item $\kappa^\circ$ is a closed gauge function;
  \item $\kappa^{\circ\circ} = \cl\kappa = \kappa^{**}$;
  \item $\kappa^\circ(y) = \sup_x\set{\innerp{x}{y}|\kappa(x)\le 1}$ for all $y$;
  \item $\kappa^*(y) = \delta_{\kappa^\circ(\cdot)\le 1}(y)$ for all $y$;
  \item $\dom\polar\kappa = \Xscr$ if $\kappa$ is closed and
    $\kappa^{-1}(0)= \set{0}$;
  \item $\epi\kappa^\circ = \set{(y,\lambda)|(y,-\lambda)\in (\epi\kappa)^\circ}$.
  \end{enumerate}
\end{proposition}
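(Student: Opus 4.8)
The six items split naturally into two groups: (ii), (iii), (iv), (vi) are essentially repackagings of the relation between the polar and the conjugate, so the strategy is to first nail down the single bridge identity connecting $\kappa^\circ$ and $\kappa^*$, and then read off the rest. The cleanest bridge is item (iv): $\kappa^*=\delta_{\{y\,:\,\kappa^\circ(y)\le 1\}}$. To see this, unwind the definition~\eqref{eq:polardef}: $\kappa^\circ(y)\le 1$ means $\innerp{x}{y}\le\kappa(x)$ for all $x$, i.e. $\innerp{x}{y}-\kappa(x)\le 0$ for all $x$, i.e. $\kappa^*(y)\le 0$; and since $\kappa(0)=0$ forces $\kappa^*(y)\ge 0$ always, $\kappa^*(y)\le 0$ is the same as $\kappa^*(y)=0$. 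Conversely if $\kappa^\circ(y)>1$ then $\innerp{x}{y}>\kappa(x)$ for some $x$, and positive homogeneity of $\kappa$ lets us scale $x\mapsto tx$ to send $\innerp{x}{y}-\kappa(x)\to+\infty$, so $\kappa^*(y)=+\infty$. That dichotomy is exactly (iv).

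\emph{From (iv) to the rest.} For (iii): apply the biconjugate/definition of the support function, or more directly compute $\sup_x\{\innerp{x}{y}\mid\kappa(x)\le 1\}$ and check it equals $\kappa^\circ(y)$ by the same homogeneity-scaling argument used above (the sup over the unit sublevel set, rescaled, recovers the tightest constant $\mu$ in~\eqref{eq:polardef}); one must be slightly careful when $\kappa(x)=0$ for some $x\ne 0$, but then $\innerp{x}{y}$ unbounded above forces both sides to $+\infty$, and otherwise the scaling is clean. For (i): by (iii), $\kappa^\circ$ is a pointwise supremum of the linear functions $y\mapsto\innerp{x}{y}$ over $x$ in the unit sublevel set, hence closed and convex; nonnegativity comes from including (or limiting to) $x=0$, positive homogeneity is immediate from the formula, and $\kappa^\circ(0)=0$, so it is a gauge. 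For (ii): $\kappa^*=\delta_{\{\kappa^\circ\le 1\}}$ from (iv), and taking conjugates again, $\kappa^{**}=(\delta_{\{\kappa^\circ\le 1\}})^*=\gamma_{\{\kappa^\circ\le 1\}}$, the Minkowski gauge of that set; but applying~\eqref{eq:11} and (iii) one identifies $\gamma_{\{\kappa^\circ\le 1\}}=\kappa^{\circ\circ}$. Meanwhile $\kappa^{**}=\cl\kappa$ is the standard biconjugate theorem \cite[Theorem~12.2]{Roc70} for proper convex functions (which $\kappa$ is). For (vi): just translate the definitions — $(y,\lambda)\in\epi\kappa^\circ$ iff $\innerp{x}{y}\le\lambda$ whenever $\kappa(x)\le 1$ (using (iii)), which, after checking the correspondence $(x,\mu)\leftrightarrow$ points of $\epi\kappa$, is precisely the condition $\innerp{(x,\mu)}{(y,-\lambda)}\le 0$ for all $(x,\mu)\in\epi\kappa$ — i.e. $(y,-\lambda)\in(\epi\kappa)^\circ$, using that $\epi\kappa$ is a cone by positive homogeneity, so its polar is the polar cone.

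\emph{Item (v) is the odd one out.} Here the claim is that closedness of $\kappa$ together with $\kappa^{-1}(0)=\{0\}$ forces $\dom\kappa^\circ=\Xscr$. The plan is: under these hypotheses the unit sublevel set $\Dscr:=\{x\mid\kappa(x)\le 1\}$ is closed (by closedness of $\kappa$), convex, and \emph{bounded} — boundedness is the crux, and it follows from $\kappa^{-1}(0)=\{0\}$: if $\Dscr$ were unbounded it would contain a ray direction $d\ne 0$ with $\kappa(td)\le 1$ for all $t\ge 0$, and positive homogeneity then gives $\kappa(d)\le 1/t\to 0$, so $\kappa(d)=0$, contradicting $\kappa^{-1}(0)=\{0\}$. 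Since $\Dscr$ is compact, the support function $y\mapsto\sup_{x\in\Dscr}\innerp{x}{y}$ is finite everywhere; by (iii) this support function is $\kappa^\circ$ (modulo the caveat that $\kappa^\circ$ is the support function of $\Dscr$ when $\kappa=\gamma_\Dscr$, which needs $\kappa$ closed so that $\kappa=\gamma_\Dscr$ exactly). Hence $\dom\kappa^\circ=\Xscr$.

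\emph{Main obstacle.} The genuinely delicate point — and the one I expect to absorb most of the care — is the boundedness argument in (v) and, relatedly, making the "scaling" steps in (iii) fully rigorous in the degenerate cases where $\kappa$ takes the value $0$ off the origin or takes the value $+\infty$ (so that $\dom\kappa$ is a proper subset and one must restrict suprema accordingly). Everything else is definition-chasing, but these degenerate cases are where a naive argument silently breaks; handling them amounts to a short case analysis (is there $x\ne 0$ with $\kappa(x)=0$? is $\innerp{x}{y}$ bounded above on the sublevel set?), and once that is in place the six items fall out in the order (iv) $\Rightarrow$ (iii) $\Rightarrow$ (i), (vi); (iv) $\Rightarrow$ (ii); and (v) separately.
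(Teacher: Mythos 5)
Your overall route is sound and genuinely different from the paper's: where the paper disposes of items (i), (ii), (iv), and (v) by citation (Rockafellar's Theorems 15.1 and 12.2 for (i)--(ii), Theorem 15.3 on monotone conjugates for (iv), and Auslender--Teboulle for the fact that $0$ lies in the interior of $\dom\kappa^*$ in (v)), you prove everything from first principles, taking (iv) as the bridge identity and handling the degenerate cases ($\kappa(x)=0$ off the origin, $\kappa=+\infty$) by explicit scaling. Your argument for (v) in particular --- closedness of $\kappa$ plus $\kappa^{-1}(0)=\{0\}$ forces the unit sublevel set to be compact via the recession-direction argument, whence its support function, which is $\kappa^\circ$ by (iii), is finite everywhere --- is correct and more self-contained than the paper's. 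What the paper's approach buys is brevity; what yours buys is independence from the cited machinery.

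There is, however, one wrong step, in item (ii). You write $\kappa^{**}=(\delta_{\{\kappa^\circ\le 1\}})^*=\gamma_{\{\kappa^\circ\le 1\}}$, ``the Minkowski gauge of that set,'' and then claim $\gamma_{\{\kappa^\circ\le 1\}}=\kappa^{\circ\circ}$. Both identifications are false: the conjugate of an indicator is the \emph{support} function, so $(\delta_{\Dscr})^*=\sigma_{\Dscr}$, not $\gamma_{\Dscr}$; and the Minkowski function of the unit sublevel set of the closed gauge $\kappa^\circ$ is $\kappa^\circ$ itself (cf.\ Proposition~\ref{sec2:prop2}(iv)), not $\kappa^{\circ\circ}$. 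The two errors happen to cancel, because $\sigma_{\Dscr}=(\gamma_{\Dscr})^\circ$ for a closed convex $\Dscr$ containing the origin (Proposition~\ref{sec2:prop3}), so the composite conclusion $\kappa^{**}=\kappa^{\circ\circ}$ is true --- but as written the chain does not establish it. The repair is immediate and actually shortens the argument: $\kappa^{**}=\sigma_{\{\kappa^\circ\le 1\}}(x)=\sup\{\langle x,y\rangle \mid \kappa^\circ(y)\le 1\}$, which is exactly $\kappa^{\circ\circ}(x)$ by your own item (iii) applied to the gauge $\kappa^\circ$. With that one-line fix, and combined with $\kappa^{**}=\cl\kappa$ for the proper convex function $\kappa$, the whole proof stands.
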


\begin{proof}
  The first two items are proved in Theorems~15.1 and~12.2 of
  \citet{Roc70}. Item (iii) follows directly from the
  definition~\eqref{eq:polardef} of the polar gauge.
  To prove item (iv), we note that if $g(t)=t$, $t\in \mathbb{R}$, then the so-called monotone conjugate $g^+$ is
  \[
    g^+(s) = \sup_{t\ge 0}\set{st - t} = \delta_{[0,1]}(s),
  \]
  where $s\ge 0$. Now, apply \citet[Theorem~15.3]{Roc70} with $g(t)=t$, and
  $\kappa^{**}$ in place of $f$ in that theorem to obtain that $\kappa^{***}(y) = \delta_{[0,1]}(\kappa^{**\circ}(y))$. The conclusion in item (iv)
  now follows by noting that
  $\kappa^{***} = \kappa^*$ and $ \kappa^{**\circ} = \kappa^{\circ\circ\circ} =
  \kappa^{\circ}$. To prove item (v), note that the assumptions
  together with \citet[Proposition~3.1.3]{AuT03} show that $0\in
  \interior\dom\kappa^*$.  This together with item (iv) and the
  positive homogeneity of $\kappa^\circ$ shows that $\dom\kappa^\circ =
  \Xscr$. Finally, item (vi) is stated on \citet[p.~137]{Roc70} and can also be verified directly from the definition.
\end{proof}

In many interesting applications, the objective in \eqref{g-primal} is the
composition $\kappa\circ A$, where $\kappa$ is a gauge and $A$ is a linear
map. Clearly, $\kappa\circ A$ is also a gauge. The next result gives the polar
of this composition.

\begin{proposition}\label{sec2:prop1.5}
  Let $A$ be a linear map.
  Suppose that either
  \begin{enumerate}[{\rm (i)}]
    \item $\epi\kappa$ is polyhedral; or
    \item $\ri \dom\kappa\cap \range A\neq \emptyset$.
  \end{enumerate}
  Then
  \begin{equation*}\label{sec2:formula}
    (\kappa\circ A)^\circ(y) =
      \inf\limits_u\set{\kappa^\circ(u)|A^*u=y}.
  \end{equation*}
  Moreover, the infimum is attained when the value is finite.
\end{proposition}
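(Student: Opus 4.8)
The plan is to compute $(\kappa\circ A)^\circ$ by routing everything through the conjugate, using Proposition~\ref{sec2:prop1}(iv), which identifies the conjugate of a gauge with the indicator of the unit ball of the polar gauge. First I would observe that $\kappa\circ A$ is a gauge (already noted), so by item (iv) of Proposition~\ref{sec2:prop1} we have $(\kappa\circ A)^*=\delta_{(\kappa\circ A)^\circ(\cdot)\le 1}$, and likewise $\kappa^*=\delta_{\kappa^\circ(\cdot)\le 1}$. Thus the claimed formula is equivalent to the statement that the infimal-projection function $y\mapsto\inf_u\set{\kappa^\circ(u)\mid A^*u=y}$ is itself a gauge whose unit ball is exactly $\set{y\mid (\kappa\circ A)^*(y)<\infty}=\dom(\kappa\circ A)^*$; so it suffices to compute $(\kappa\circ A)^*$ and to show the infimal projection is closed (attained) under either hypothesis.

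Next I would compute $(\kappa\circ A)^*$ directly from the definition \eqref{eq:10}: $(\kappa\circ A)^*(y)=\sup_x\set{\innerp{x}{y}-\kappa(Ax)}$. This is the standard conjugate-of-a-composition-with-a-linear-map identity, whose conclusion is $(\kappa\circ A)^*(y)=\inf_u\set{\kappa^*(u)\mid A^*u=y}$ (an infimal projection of $\kappa^*$ through $A^*$), with the infimum attained when finite. The appropriate reference is \citet[Theorem~16.3]{Roc70}, whose hypotheses are precisely that either $\epi\kappa$ is polyhedral (covering case (i), since then $\kappa\circ A$ has polyhedral epigraph too) or $\ri\dom\kappa\cap\range A\ne\emptyset$ (case (ii)); in both cases that theorem also gives attainment of the inner infimum. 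Substituting $\kappa^*=\delta_{\kappa^\circ(\cdot)\le1}$ from Proposition~\ref{sec2:prop1}(iv), we get $(\kappa\circ A)^*(y)=\inf_u\set{\delta_{\kappa^\circ(\cdot)\le1}(u)\mid A^*u=y}=\delta_{\Sscr}(y)$, where $\Sscr=\set{y\mid \exists\,u,\ A^*u=y,\ \kappa^\circ(u)\le1}=A^*\set{u\mid\kappa^\circ(u)\le1}$, and attainment means this $u$ can be chosen.

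Finally I would connect $\Sscr$ back to the polar gauge. By Proposition~\ref{sec2:prop1}(iv) applied to $\kappa\circ A$, $(\kappa\circ A)^*=\delta_{(\kappa\circ A)^\circ(\cdot)\le1}$, so comparing with $\delta_{\Sscr}$ gives $\set{y\mid(\kappa\circ A)^\circ(y)\le1}=\Sscr=A^*\set{u\mid\kappa^\circ(u)\le1}$. Both $(\kappa\circ A)^\circ$ and the infimal projection $y\mapsto\inf_u\set{\kappa^\circ(u)\mid A^*u=y}$ are gauges (the latter because $\kappa^\circ$ is a gauge and infimal projection through a linear map preserves nonnegativity, positive homogeneity, convexity, and vanishing at the origin), and a gauge is determined by its unit sublevel set via \eqref{eq:11}; since the two gauges share the same unit sublevel set $\Sscr$, they coincide. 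Attainment of the infimum when finite is inherited from the attainment in \citet[Theorem~16.3]{Roc70}. The main obstacle is really just bookkeeping: one must verify that the sublevel set $\set{y\mid(\kappa\circ A)^\circ(y)\le1}$ genuinely recovers the gauge (i.e. that no rescaling ambiguity arises), and that the hypotheses (i) and (ii) map correctly onto the constraint-qualification hypotheses of the Rockafellar composition theorem — in particular that polyhedrality of $\epi\kappa$ suffices in case (i) without any interior condition.
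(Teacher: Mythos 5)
Your route is genuinely different from the paper's. The paper evaluates $(\kappa\circ A)^\circ$ directly via Proposition~\ref{sec2:prop1}(iii) as the constrained supremum $\sup_x\set{\innerp{y}{x}\mid\kappa(Ax)\le 1}$ and applies Fenchel duality \citep[Corollary~31.2.1]{Roc70} to that optimization problem; you instead pass through the conjugate identity $(\kappa\circ A)^*=A^*\kappa^*$ and then match unit balls using Proposition~\ref{sec2:prop1}(iv). For the formula itself your argument can be made to work: you obtain $\dom(\kappa\circ A)^*=A^*\set{u\mid\kappa^\circ(u)\le1}=:\Sscr$, and since the infimal projection $h(y)=\inf_u\set{\kappa^\circ(u)\mid A^*u=y}$ is a gauge satisfying $\set{h<1}\subseteq\Sscr\subseteq\set{h\le1}$, both $h$ and $(\kappa\circ A)^\circ$ coincide with the Minkowski function $\gamma_\Sscr$. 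Note, though, that your assertion that the two gauges ``share the same unit sublevel set'' is circular as written: proving $\set{h\le1}\subseteq\Sscr$ already requires the attainment you are trying to establish, so you should argue via the sandwich just described rather than via equality of the sublevel sets.

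There are two genuine gaps. First, the attainment clause. Theorem~16.3 gives attainment for the infimal projection of $\kappa^*=\delta_{\kappa^\circ(\cdot)\le1}$; that is, for each $y$ with $(\kappa\circ A)^\circ(y)\le1$ it produces $u$ with $A^*u=y$ and $\kappa^\circ(u)\le1$. By positive homogeneity this does yield a minimizer of $h(y)$ whenever $0<h(y)<\infty$, but it does not settle the case $h(y)=0$: there you only get $u_n$ with $A^*u_n=y$ and $\kappa^\circ(u_n)\le 1/n$, and without a compactness argument these need not accumulate at a point where $\kappa^\circ$ vanishes. The paper's Fenchel-duality route delivers dual attainment whenever the common optimal value is finite, including zero, which is precisely why that machinery is invoked; your ``inherited'' attainment does not cover this case. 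Second, case (i) is not covered by Theorem~16.3 as you cite it: the refinement allowing the closure operation to be dropped (with attainment) is stated there only under the relative-interior condition, so for polyhedral $\epi\kappa$ you need the polyhedral refinements --- e.g.\ \citet[Corollary~19.3.1]{Roc70}, or the remark following Corollary~31.2.1 that the paper itself relies on.
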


\begin{proof}
  Since $\kappa\circ A$ is a gauge, we have from Proposition~\ref{sec2:prop1}(iii) that
  \begin{equation*}
      (\kappa\circ A)^\circ(y) = \sup_x\set{\innerp{y}{x}|\kappa(Ax)\le 1} = -\inf_x\set{\innerp{-y}{x} + \delta_\Dscr(Ax)},
  \end{equation*}
  where $\Dscr = \set{x|\kappa(x)\le 1}$.  Since $\kappa$ is
  positively homogeneous, we have $\dom \kappa =
  \bigcup_{\lambda\ge0}\lambda\Dscr$.  Hence, $\ri\dom\kappa =
  \bigcup_{\lambda > 0}\lambda \ri \Dscr$ from
  \citet[p.~50]{Roc70}. Thus, assumption (ii) implies that
  $\ri\Dscr\cap \range A\neq \emptyset$. On the other hand, assumption
  (i) implies that $\Dscr$ is polyhedral; and $\Dscr\cap \Range A\neq
  \emptyset$ because they both contain the origin.  Use these
  conclusions and apply \citet[Corollary~31.2.1]{Roc70} (see also
  Rockafellar's remark right after that corollary for the case when
  $\Dscr$ is polyhedral) to conclude that
  \begin{equation*}
  \begin{split}
      (\kappa\circ A)^\circ(y) & = -\sup_u\set{-(\innerp{-y}{\cdot})^*(-A^*u) - (\delta_\Dscr)^*(u)}\\
      & = -\sup_u\set{-\kappa^\circ(u)|A^*u=y},
    \end{split}
  \end{equation*}
  where the second equality follows from the definition of conjugate
  functions and Proposition~\ref{sec2:prop1}(iii).  Moreover, from
  that same corollary, the supremum is attained when finite. (Note
  that Rockafeller's statement of that corollary is formulated for the
  difference between convex and concave function, and must be
  appropriately adapted to our case.)  This completes the proof.
\end{proof}

Suppose that a gauge is given as the Minkowski function of a nonempty
convex set that may not necessarily contain the origin. The following
proposition summarizes some properties concerning this representation.

\begin{proposition}\label{sec2:prop2}
  Suppose that $\Dscr$ is a nonempty convex set. Then
  \begin{enumerate}[{\rm (i)}]
  \item $(\gamma_\Dscr)^\circ = \gamma_{\Dscr^\circ}$;
  \item $\gamma_\Dscr = \gamma_{{\rm conv}(\{0\}\cup\Dscr)}$;
  \item $\gamma_\Dscr$ is closed if ${\rm conv}(\{0\}\cup\Dscr)$ is closed.
  \item If $\kappa=\gamma_\Dscr$, $\Dscr$ is closed, and $0\in \Dscr$, then
    $\Dscr$ is the unique closed convex set containing the origin such that $\kappa
    = \gamma_\Dscr$; indeed, $\Dscr = \set{x|\kappa(x)\le 1}$.
  \end{enumerate}
\end{proposition}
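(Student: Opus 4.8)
The plan is to establish the four items in sequence, using the Minkowski-function definition \eqref{eq:11} and the polarity facts from \S\ref{sec:polar} and Proposition~\ref{sec2:prop1}. For item (i), I would argue directly from the definitions: unwinding $\gamma_{\Dscr^\circ}(y) = \inf\set{\lambda \ge 0 \mid y \in \lambda\Dscr^\circ}$ and noting that $y \in \lambda\Dscr^\circ$ for $\lambda > 0$ is equivalent to $\innerp{x}{y} \le \lambda$ for all $x \in \Dscr$, which in turn (using $\gamma_\Dscr(x) = \inf\set{t \ge 0 \mid x \in t\Dscr}$ and positive homogeneity) is equivalent to $\innerp{x}{y} \le \lambda\,\gamma_\Dscr(x)$ for all $x$. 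Comparing with \eqref{eq:polardef} this says exactly $\lambda \ge (\gamma_\Dscr)^\circ(y)$, giving the claim; the $\lambda = 0$ case and the case $y \notin \dom\gamma_{\Dscr^\circ}$ need a separate sentence but are routine.

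For item (ii), both sides are Minkowski functions, so it suffices to check the defining infima agree: since $0 \in \conv(\{0\}\cup\Dscr)$ and $\Dscr \subseteq \conv(\{0\}\cup\Dscr)$, one inequality is immediate from monotonicity of $\Dscr \mapsto \gamma_\Dscr$; for the reverse, if $x \in \lambda\conv(\{0\}\cup\Dscr)$ with $\lambda > 0$, write $x$ as a convex combination of $0$ and points of $\lambda\Dscr$ and use convexity of $\lambda\Dscr$ together with the fact that scaling down toward $0$ keeps us inside $\lambda\Dscr$ only after we pass to $\set{x \mid \gamma_\Dscr(x)\le 1}$ — more cleanly, observe $\gamma_\Dscr = \gamma_{\set{x \mid \gamma_\Dscr(x) \le 1}}$ and that $\set{x \mid \gamma_\Dscr(x)\le 1}$ already is convex and contains the origin and contains $\Dscr$, hence contains $\conv(\{0\}\cup\Dscr)$, which forces $\gamma_{\conv(\{0\}\cup\Dscr)} \ge \gamma_\Dscr$. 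Item (iii) then follows by combining (ii) with the standard fact that the Minkowski function of a closed convex set containing the origin is closed (its epigraph being closed), applied to $\conv(\{0\}\cup\Dscr)$.

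For item (iv), uniqueness: suppose $\Dscr$ is closed convex, $0 \in \Dscr$, and $\kappa = \gamma_\Dscr$. I would show $\Dscr = \set{x \mid \kappa(x)\le 1}$ by double inclusion. The inclusion $\Dscr \subseteq \set{x \mid \kappa(x)\le 1}$ is immediate from the definition of $\gamma_\Dscr$ (take $\lambda = 1$). For the reverse, take $x$ with $\kappa(x) = \gamma_\Dscr(x) \le 1$; then for every $\lambda > 1$ we have $x \in \lambda\Dscr$, i.e. $x/\lambda \in \Dscr$, and letting $\lambda \downarrow 1$ and using that $\Dscr$ is closed gives $x \in \Dscr$. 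Since $\set{x\mid\kappa(x)\le 1}$ is determined by $\kappa$ alone, any two such sets coincide, proving uniqueness. I do not anticipate a serious obstacle here; the one point requiring care is the interplay in (ii) between $\gamma_\Dscr$ and $\gamma_{\set{x\mid\gamma_\Dscr(x)\le1}}$ — making sure the level set $\set{x \mid \gamma_\Dscr(x)\le 1}$ genuinely contains $\Dscr$ (it does, by the $\lambda=1$ observation) and is convex (it is, as $\gamma_\Dscr$ is convex) — so that the monotonicity argument closes the loop without circularity.
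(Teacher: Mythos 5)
Your proposal is correct, but it takes a more self-contained route than the paper in two places. For item (i) the paper simply cites \citet[Theorem~15.1]{Roc70}, whereas you unwind both definitions and match them against \eqref{eq:polardef}; your computation goes through (the only care needed is the approximating-infimum step showing $\innerp{x}{y}\le\lambda$ on $\Dscr$ implies $\innerp{x}{y}\le\lambda\gamma_\Dscr(x)$ for all $x$, which you handle). For item (iii) the paper argues via conjugacy: $\gamma_\Dscr$ is closed iff $\gamma_\Dscr=\gamma_\Dscr^{**}$, and $\gamma_\Dscr^{**}=\gamma_{\Dscr^{\circ\circ}}=\gamma_{\cl\Dscr}$ by item (i), Proposition~\ref{sec2:prop1}(ii), and the bipolar theorem; you instead reduce via (ii) to a closed convex set containing the origin and invoke the ``standard fact'' that its Minkowski function is closed. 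That is valid, but note that after the reduction this standard fact \emph{is} the entire content of (iii), so it should not be left as a bare assertion: the one-line justification is that each sublevel set satisfies $\set{x\mid\gamma_\Dscr(x)\le\alpha}=\bigcap_{\beta>\alpha}\beta\Dscr$ (using $0\in\Dscr$ and convexity to make $\set{\mu\mid x\in\mu\Dscr}$ an up-set), hence is closed, so $\gamma_\Dscr$ is lower semicontinuous. Your arguments for (ii) (the clean version via the level set $\set{x\mid\gamma_\Dscr(x)\le1}$, which the paper dispatches as ``directly from the definition'') and for (iv) (double inclusion with $\lambda\downarrow1$ and closedness, where the paper cites \citet[p.~128]{Roc70}) coincide in substance with what the paper intends. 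The trade-off is the usual one: the paper's version is shorter by outsourcing (i) and leveraging the polarity machinery it has already set up, while yours is elementary and does not depend on the bipolar theorem or on Proposition~\ref{sec2:prop1}(ii).
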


\begin{proof}
  Item (i) is proved in \citet[Theorem~15.1]{Roc70}. Item (ii) follows
  directly from the definition. To prove (iii), we first notice from
  item (ii) that we may assume without loss of generality that $\Dscr$
  contains the origin. Notice also that $\gamma_\Dscr$ is closed if
  and only if $\gamma_\Dscr = \gamma_\Dscr^{**}$. Moreover,
  $\gamma_\Dscr^{**} =\gamma_{\Dscr^{\circ\circ}} =
  \gamma_{\cl\Dscr}$, where the first equality follows from
  Proposition~\ref{sec2:prop1}(ii) and item (i), while the second
  equality follows from the bipolar theorem.  Thus, $\gamma_\Dscr$ is
  closed if and only if $\gamma_\Dscr = \gamma_{\cl\Dscr}$.  The
  latter holds when $\Dscr = \cl\Dscr$. Finally, the conclusion in item (iv)
  was stated on \citet[p.~128]{Roc70}; indeed, the relation $\Dscr = \set{x|\kappa(x)\le 1}$
  can be verified directly from definition.
\end{proof}

From Proposition~\ref{sec2:prop1}(iv) and Proposition~\ref{sec2:prop2}(iv), it is not hard to prove
the following formula on the polar of the sum of two gauges of independent variables.

\begin{proposition}\label{sec2:prop2.5}
  Let $\kappa_1$ and $\kappa_2$ be gauges. Then
  $\kappa(x_1,x_2):= \kappa_1(x_1) + \kappa_2(x_2)$
  is a gauge, and its polar is given by
  \[
  \kappa^\circ(y_1,y_2)
  = \max\set{\kappa_1^\circ(y_1),\ \kappa_2^\circ(y_2)}.
  \]
\end{proposition}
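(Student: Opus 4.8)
The plan is to verify the gauge claim directly and then reduce the polar formula to the conjugate identity of Proposition~\ref{sec2:prop1}(iv). That $\kappa(x_1,x_2)=\kappa_1(x_1)+\kappa_2(x_2)$ is a gauge is immediate: nonnegativity, positive homogeneity, convexity, and vanishing at the origin are all preserved under the separable sum. The same elementary bookkeeping shows that $\mu(y_1,y_2):=\max\set{\kappa_1^\circ(y_1),\ \kappa_2^\circ(y_2)}$ is a gauge, and in fact a closed one, since each $\kappa_i^\circ$ is a closed gauge by Proposition~\ref{sec2:prop1}(i) and a pointwise maximum of closed functions is closed.

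Next I would pass to conjugates. Because $\kappa$ splits across the two blocks of variables, block-separability of the Legendre--Fenchel transform gives $\kappa^*(y_1,y_2)=\kappa_1^*(y_1)+\kappa_2^*(y_2)$. Applying Proposition~\ref{sec2:prop1}(iv) to each $\kappa_i$ yields $\kappa_i^*=\delta_{\set{\kappa_i^\circ(\cdot)\le1}}$, so
\[
  \kappa^*(y_1,y_2)
  = \delta_{\set{\kappa_1^\circ(\cdot)\le1}}(y_1) + \delta_{\set{\kappa_2^\circ(\cdot)\le1}}(y_2)
  = \delta_{\Dscr}(y_1,y_2),
\]
where $\Dscr=\set{(y_1,y_2)|\kappa_1^\circ(y_1)\le1,\ \kappa_2^\circ(y_2)\le1}=\set{(y_1,y_2)|\mu(y_1,y_2)\le1}$. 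On the other hand, Proposition~\ref{sec2:prop1}(iv) applied to the gauge $\kappa$ itself gives $\kappa^*=\delta_{\set{\kappa^\circ(\cdot)\le1}}$. Comparing effective domains shows $\set{(y_1,y_2)|\kappa^\circ(y_1,y_2)\le1}=\Dscr$. Now both $\kappa^\circ$ (closed gauge by Proposition~\ref{sec2:prop1}(i)) and $\mu$ are closed gauges, hence each is the Minkowski function of its own unit sublevel set, cf.~\eqref{eq:11}; since those sublevel sets are the single set $\Dscr$, Proposition~\ref{sec2:prop2}(iv) forces $\kappa^\circ=\gamma_{\Dscr}=\mu$, which is the claimed identity.

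I do not expect a genuine obstacle here. The only points needing care are the block-separability of the conjugate (routine) and the fact that a closed gauge is uniquely recovered from its unit sublevel set, which is precisely what Proposition~\ref{sec2:prop2}(iv) supplies once we observe that both $\kappa^\circ$ and $\mu$ are closed gauges.
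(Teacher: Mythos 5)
Your proposal is correct and follows essentially the same route as the paper: both compute $\kappa^*=\kappa_1^*+\kappa_2^*=\delta_{\Dscr_1\times\Dscr_2}$ via block-separability and Proposition~\ref{sec2:prop1}(iv), and then recover $\kappa^\circ$ from its unit sublevel set via Proposition~\ref{sec2:prop2}(iv). The only cosmetic difference is that the paper evaluates the Minkowski function $\gamma_{\Dscr_1\times\Dscr_2}$ directly as a maximum, whereas you verify that the candidate $\max\set{\kappa_1^\circ,\kappa_2^\circ}$ is a closed gauge with the same unit ball.
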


\begin{proof}
  It is clear that $\kappa$ is a gauge. Moreover,
    \begin{equation*}
    \begin{split}
      \kappa^*(y_1,y_2)& = \kappa_1^*(y_1) + \kappa_2^*(y_2)= \delta_{\Dscr_1\times \Dscr_2}(y_1,y_2),
    \end{split}
  \end{equation*}
  where $\Dscr_i = \set{x|\kappa^\circ_i(x)\le 1}$ for $i = 1,2$; the first equality follows from the definition of the convex conjugate and the fact that $y_1$ and $y_2$ are decoupled, and
  the second equality follows from Proposition~\ref{sec2:prop1}(iv). This together with Proposition~\ref{sec2:prop2}(iv) implies that
  \begin{equation*}
    \begin{split}
     \kappa^\circ(y_1,y_2) & = \inf\set{\lambda\ge 0| y_1\in \lambda \Dscr_1,\ y_2\in \lambda \Dscr_2}\\
      & = \max\set{\inf\set{\lambda\ge 0| y_1\in \lambda \Dscr_1},\ \inf\set{\lambda\ge 0| y_2\in \lambda \Dscr_2}}\\
      & = \max\set{\gamma_{\Dscr_1}(y_1),\ \gamma_{\Dscr_2}(y_2)}=
      \max\set{\kappa_1^\circ(y_1),\ \kappa_2^\circ(y_2)}.
    \end{split}
  \end{equation*}
  This completes the proof.
\end{proof}

The following corollary is immediate from
Propositions~\ref{sec2:prop1.5} and~\ref{sec2:prop2.5}.

\begin{corollary}
  Let $\kappa_1$ and $\kappa_2$ be gauges.  Suppose that either
  \begin{enumerate}[{\rm (i)}]
    \item $\epi\kappa_1$ and $\epi\kappa_2$ are polyhedral; or
    \item $\ri \dom\kappa_1\cap \ri \dom \kappa_2\neq \emptyset$.
  \end{enumerate}
  Then
  \begin{equation}
    (\kappa_1 + \kappa_2)^\circ(y) =
      \inf_{u_1,u_2}\set{\max\set{\kappa_1^\circ(u_1),\ \kappa_2^\circ(u_2)}|u_1+u_2=y}.
  \end{equation}
  Moreover, the infimum is attained when finite.
\end{corollary}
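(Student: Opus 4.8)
The plan is to realize $\kappa_1+\kappa_2$ as the composition of a gauge on the product space with a linear map, apply Proposition~\ref{sec2:prop1.5} to that composition, and then evaluate the polar of the product gauge using Proposition~\ref{sec2:prop2.5}. Concretely, I would introduce
\[
  \widetilde\kappa(x_1,x_2) := \kappa_1(x_1) + \kappa_2(x_2)
\]
on $\Xscr\times\Xscr$, which is a gauge by Proposition~\ref{sec2:prop2.5}, together with the linear map $A\colon\Xscr\to\Xscr\times\Xscr$ given by $Ax=(x,x)$, whose adjoint is $A^*(u_1,u_2)=u_1+u_2$. Then $\kappa_1+\kappa_2 = \widetilde\kappa\circ A$.

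Granting for the moment that the hypotheses of Proposition~\ref{sec2:prop1.5} hold for $\widetilde\kappa$ and $A$, that proposition gives
\[
  (\kappa_1+\kappa_2)^\circ(y)
  = (\widetilde\kappa\circ A)^\circ(y)
  = \inf_u\set{\widetilde\kappa^\circ(u)\mid A^*u=y}
  = \inf_{u_1,u_2}\set{\widetilde\kappa^\circ(u_1,u_2)\mid u_1+u_2=y},
\]
with the infimum attained whenever it is finite. Substituting the identity $\widetilde\kappa^\circ(u_1,u_2)=\max\set{\kappa_1^\circ(u_1),\kappa_2^\circ(u_2)}$ from Proposition~\ref{sec2:prop2.5} yields exactly the claimed formula, and the attainment statement is inherited.

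It remains to check the hypotheses of Proposition~\ref{sec2:prop1.5} under each alternative. Under (i), I would observe that $\epi\widetilde\kappa$ is the image of the polyhedral set $\epi\kappa_1\times\epi\kappa_2$ under the linear map $(x_1,\mu_1,x_2,\mu_2)\mapsto(x_1,x_2,\mu_1+\mu_2)$, hence polyhedral, so hypothesis (i) of Proposition~\ref{sec2:prop1.5} is met. Under (ii), I would use that $\dom\widetilde\kappa=\dom\kappa_1\times\dom\kappa_2$ and that relative interior commutes with Cartesian products \citep[Theorem~6.8]{Roc70}, so $\ri\dom\widetilde\kappa=\ri\dom\kappa_1\times\ri\dom\kappa_2$; any $x\in\ri\dom\kappa_1\cap\ri\dom\kappa_2$ then yields $(x,x)\in\ri\dom\widetilde\kappa\cap\range A$, which is hypothesis (ii) of Proposition~\ref{sec2:prop1.5}. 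There is no serious obstacle here: the only points needing care are the two standard facts just invoked — that linear images of polyhedra are polyhedral, and that relative interior distributes over products — and everything else is bookkeeping with the adjoint $A^*$ and a substitution.
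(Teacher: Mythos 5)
Your proposal is correct and follows exactly the paper's route: the paper's proof is precisely to apply Proposition~\ref{sec2:prop1.5} with $Ax=(x,x)$ to the product gauge $\kappa_1(x_1)+\kappa_2(x_2)$, whose polar is supplied by Proposition~\ref{sec2:prop2.5}. Your additional verification of the hypotheses (polyhedrality of $\epi\widetilde\kappa$ as a linear image of $\epi\kappa_1\times\epi\kappa_2$, and $\ri$ distributing over products) is sound detail that the paper leaves implicit.
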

\begin{proof}
  Apply Proposition~\ref{sec2:prop1.5} with $Ax = (x,x)$ and the gauge
  $\kappa_1(x_1) + \kappa_2(x_2)$, whose polar is given by
  Proposition~\ref{sec2:prop2.5}.
\end{proof}

The support function for a nonempty convex set $\Dscr$ is defined as
\[
\sigma_\Dscr(y)=\sup_{x\in \Dscr}\, \innerp{x}{y}.
\]
It is straightforward to check that if $\Dscr$ contains the origin, then the
support function is a (closed) gauge function. Indeed, we have the following
relationship between support and Minkowski functions
\citep[Corollary~15.1.2]{Roc70}.

\begin{proposition}\label{sec2:prop3}
  Let $\Dscr$ be a closed convex set that contains the origin. Then
  $\gamma^\circ_\Dscr = \sigma_\Dscr$ and $\sigma^\circ_\Dscr =
  \gamma_\Dscr$.
\end{proposition}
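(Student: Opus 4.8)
The plan is to deduce this from the polarity calculus already established, in particular Proposition~\ref{sec2:prop2}(i) together with Proposition~\ref{sec2:prop1}(ii) and the bipolar theorem. First, observe that since $\Dscr$ is a closed convex set containing the origin, it equals its own bipolar, $\Dscr = \Dscr^{\circ\circ}$, by \citet[Theorem~14.5]{Roc70}. Now apply Proposition~\ref{sec2:prop2}(i) twice: once to get $(\gamma_\Dscr)^\circ = \gamma_{\Dscr^\circ}$, and once more (with $\Dscr^\circ$ in place of $\Dscr$, which is legitimate since $\Dscr^\circ$ is again a nonempty convex set) to get $(\gamma_\Dscr)^{\circ\circ} = \gamma_{\Dscr^{\circ\circ}} = \gamma_\Dscr$.

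Next, I would identify the Minkowski function of the polar set with the support function of $\Dscr$. The key identity is that $\gamma_{\Dscr^\circ} = \sigma_\Dscr$, which follows because, by the definition of the polar set, $y \in \lambda \Dscr^\circ$ for $\lambda > 0$ exactly when $\innerp{x}{y} \le \lambda$ for all $x \in \Dscr$, i.e., when $\lambda \ge \sup_{x\in\Dscr}\innerp{x}{y} = \sigma_\Dscr(y)$; taking the infimum over such $\lambda$ gives $\gamma_{\Dscr^\circ}(y) = \sigma_\Dscr(y)$ (with the usual convention that the infimum of the empty set is $+\infty$, matching $\sigma_\Dscr(y) = +\infty$ when the supremum is unbounded, and noting $0 \in \Dscr$ forces $\sigma_\Dscr \ge 0$). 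Combining this with the first paragraph yields $\gamma^\circ_\Dscr = \gamma_{\Dscr^\circ} = \sigma_\Dscr$, which is the first claimed identity.

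For the second identity, $\sigma^\circ_\Dscr = \gamma_\Dscr$, I would simply polarize the first: from $\gamma^\circ_\Dscr = \sigma_\Dscr$ we get $\sigma^\circ_\Dscr = \gamma^{\circ\circ}_\Dscr$, and by Proposition~\ref{sec2:prop1}(ii) this equals $\cl\gamma_\Dscr$. Since $\Dscr$ is closed and contains the origin, $\mathrm{conv}(\{0\}\cup\Dscr) = \Dscr$ is closed, so Proposition~\ref{sec2:prop2}(iii) gives that $\gamma_\Dscr$ is closed, i.e., $\cl\gamma_\Dscr = \gamma_\Dscr$. Hence $\sigma^\circ_\Dscr = \gamma_\Dscr$.

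There is no serious obstacle here; the statement is essentially a repackaging of $(\gamma_\Dscr)^\circ = \gamma_{\Dscr^\circ}$ through the elementary fact $\gamma_{\Dscr^\circ} = \sigma_\Dscr$. The only points requiring a little care are the bookkeeping of the $+\infty$ conventions in the Minkowski-gauge definition when $\sigma_\Dscr$ takes the value $+\infty$, and invoking closedness of $\gamma_\Dscr$ (via Proposition~\ref{sec2:prop2}(iii)) so that the double-polar collapses back to $\gamma_\Dscr$ rather than merely its closure. Indeed, since this is precisely \citet[Corollary~15.1.2]{Roc70}, it would also be acceptable to cite that reference directly; I would include the short argument above for completeness.
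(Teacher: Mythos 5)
Your proof is correct. Note that the paper itself supplies no argument for this proposition: it is quoted directly as \citet[Corollary~15.1.2]{Roc70}, which you correctly identified as an acceptable alternative. Your self-contained derivation is sound: the identification $\gamma_{\Dscr^\circ}=\sigma_\Dscr$ is exactly right (including the handling of $\lambda=0$, the $+\infty$ convention, and the fact that $0\in\Dscr$ forces $\sigma_\Dscr\ge0$), and combining it with Proposition~\ref{sec2:prop2}(i) gives the first identity. For the second identity you take a slight detour through conjugacy and closedness of $\gamma_\Dscr$ via Propositions~\ref{sec2:prop1}(ii) and~\ref{sec2:prop2}(iii); this works, but it is marginally cleaner to apply Proposition~\ref{sec2:prop2}(i) once more with $\Dscr^\circ$ in place of $\Dscr$ and then use your own identity together with the bipolar theorem: $\sigma_\Dscr^\circ=(\gamma_{\Dscr^\circ})^\circ=\gamma_{\Dscr^{\circ\circ}}=\gamma_\Dscr$, which sidesteps the closure bookkeeping entirely. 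Either way, the argument buys the reader a proof from the polarity calculus already developed in the paper rather than an external citation, at the cost of a few lines.
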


\subsection{Antipolar sets}

The antipolar $\Cscr'$, defined by~\eqref{eq:1}, is nonempty as a
consequence of the separation theorem. Freund's \citeyear{freund:1987} derivations are largely based on the following definition of a ray-like set. (As Freund mentions, the terms \emph{antipolar} and \emph{ray-like} are not universally
used.)

\begin{definition}
  A set $\Dscr$ is \emph{ray-like} if for any $x,y\in \Dscr$,
  \[
    x + \alpha y\in \Dscr \textt{for all} \alpha\ge 0.
  \]
\end{definition}
Note that the antipolar $\Cscr'$ of a (not
necessarily ray-like) set $\Cscr$ must be ray-like.

The following result is analogous to the bipolar theorem for antipolar operations; see \citet[p.~176]{McLinden:1978} and \citet[Lemma~3]{freund:1987}.

\begin{theorem}[Bi-antipolar theorem] \label{thm:bi-antipolar}
  $\Cscr = \Cscr''$ if and only if $\Cscr$ is ray-like.
\end{theorem}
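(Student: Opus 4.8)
The plan is to handle the two implications separately. The forward direction is immediate: as noted just before the theorem, the antipolar of \emph{any} set is ray-like, so $\Cscr'' = (\Cscr')'$ is ray-like, and therefore $\Cscr = \Cscr''$ forces $\Cscr$ to be ray-like. (The inclusion $\Cscr \subseteq \Cscr''$ also holds unconditionally, directly from the definitions: if $x \in \Cscr$ then $\innerp{y}{x}\ge 1$ for every $y \in \Cscr'$, hence $x\in\Cscr''$.) So the real content is the converse.

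For the converse, assume $\Cscr$ is ray-like; by the blanket assumptions it is also nonempty, closed, convex, and avoids the origin. Since $\Cscr\subseteq\Cscr''$ always, it suffices to show $\Cscr''\subseteq\Cscr$, i.e., that every $z\notin\Cscr$ also fails to lie in $\Cscr''$. Fix such a $z$. Because $\Cscr$ is closed convex and $z\notin\Cscr$ (so $z$ is at positive distance from $\Cscr$), the separation theorem yields $y_0$ with $\innerp{y_0}{z}<\mu$, where $\mu:=\inf_{x\in\Cscr}\innerp{y_0}{x}$ is a finite real number. The aim is to rescale $y_0$ into an actual member of $\Cscr'$ without letting its inner product with $z$ reach $1$.

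Two facts drive the argument. First, ray-likeness implies $\innerp{y_0}{x}\ge 0$ for all $x\in\Cscr$, hence $\mu\ge 0$: if $\innerp{y_0}{x_0}<0$ for some $x_0\in\Cscr$, then taking $x=y=x_0$ in the definition of ray-like gives $\lambda x_0\in\Cscr$ for every $\lambda\ge 1$, forcing $\innerp{y_0}{\lambda x_0}\to-\infty$ and contradicting finiteness of $\mu$. Second, since $0\notin\Cscr$, separating the origin from $\Cscr$ and rescaling produces $w\in\Cscr'$, i.e., $\innerp{w}{x}\ge 1$ on $\Cscr$. Combining these, for any $\epsilon>0$ we get $\innerp{y_0+\epsilon w}{x}\ge\mu+\epsilon\ge\epsilon>0$ on $\Cscr$, so $\nu_\epsilon:=\inf_{x\in\Cscr}\innerp{y_0+\epsilon w}{x}$ satisfies $\nu_\epsilon\ge\mu+\epsilon>0$ and $y:=\nu_\epsilon^{-1}(y_0+\epsilon w)\in\Cscr'$. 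Finally, $\innerp{y_0+\epsilon w}{z}=\innerp{y_0}{z}+\epsilon\innerp{w}{z}\to\innerp{y_0}{z}<\mu$ as $\epsilon\downarrow 0$, so for all sufficiently small $\epsilon$ we have $\innerp{y_0+\epsilon w}{z}<\mu+\epsilon\le\nu_\epsilon$, that is, $\innerp{y}{z}<1$. Hence $z\notin\Cscr''$, completing the proof.

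The step I expect to be the main obstacle is the rescaling in the converse: the naive move of normalizing $y_0$ so that $\inf_{\Cscr}\innerp{y_0}{\cdot}=1$ fails precisely when that infimum is $0$, and repairing it is exactly where both hypotheses enter — $0\notin\Cscr$ to supply the auxiliary functional $w\in\Cscr'$, and ray-likeness to guarantee $\mu\ge 0$, so that the perturbed functional $y_0+\epsilon w$ is strictly positive on $\Cscr$ and can be legitimately normalized. Everything else is bookkeeping.
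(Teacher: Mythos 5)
Your proof is correct. The paper does not actually prove this theorem---it is quoted from \citet[p.~176]{McLinden:1978} and \citet[Lemma~3]{freund:1987}---and your separation argument (strictly separating $z\notin\Cscr$ from $\Cscr$, using $0\notin\Cscr$ to obtain an auxiliary $w\in\Cscr'$, and using ray-likeness to force $\inf_{x\in\Cscr}\innerp{y_0}{x}\ge 0$ so that $y_0+\epsilon w$ can be normalized into $\Cscr'$ while keeping its inner product with $z$ below $1$) is essentially the standard proof found in those references; both directions, including the unconditional inclusion $\Cscr\subseteq\Cscr''$ and the ray-likeness of every antipolar, check out.
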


The following proposition, stated by \citet[p.~176]{McLinden:1978},
follows from the bi-antipolar theorem.

\begin{proposition}\label{sec3:C''}
  $ \Cscr'' = \bigcup_{\lambda\ge 1}\lambda \Cscr.$
\end{proposition}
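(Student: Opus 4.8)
The plan is to deduce the identity $\Cscr'' = \bigcup_{\lambda\ge 1}\lambda\Cscr$ directly from the bi-antipolar theorem (Theorem~\ref{thm:bi-antipolar}), which asserts that a set equals its double antipolar precisely when it is ray-like. First I would set $\Dscr := \bigcup_{\lambda\ge 1}\lambda\Cscr$ and argue that $\Dscr$ is ray-like; then, since the antipolar of any set is ray-like, a second reduction shows $\Dscr'' = \Dscr$, so it suffices to prove $\Cscr' = \Dscr'$, i.e.\ that passing from $\Cscr$ to $\bigcup_{\lambda\ge1}\lambda\Cscr$ does not change the antipolar. Because antipolars reverse inclusions and $\Cscr\subseteq\Dscr$, we immediately get $\Dscr'\subseteq\Cscr'$; the reverse inclusion is the only substantive computation.

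For the reverse inclusion, take $y\in\Cscr'$, so $\innerp{y}{x}\ge1$ for all $x\in\Cscr$. Any element of $\Dscr$ has the form $\lambda x$ with $\lambda\ge1$ and $x\in\Cscr$, and then $\innerp{y}{\lambda x} = \lambda\innerp{y}{x}\ge\lambda\cdot 1\ge 1$, using $\innerp{y}{x}\ge 1>0$. Hence $y\in\Dscr'$, giving $\Cscr'\subseteq\Dscr'$ and therefore $\Cscr' = \Dscr'$. Taking antipolars once more yields $\Cscr'' = \Dscr'' = \Dscr$, where the last equality is the bi-antipolar theorem applied to the ray-like set $\Dscr$.

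To complete the argument I need the two ray-like claims. That $\Dscr = \bigcup_{\lambda\ge1}\lambda\Cscr$ is ray-like follows from convexity of $\Cscr$ together with the fact that $\Cscr$ does not contain the origin: given $u = \lambda_1 x_1$ and $v = \lambda_2 x_2$ in $\Dscr$ with $\lambda_i\ge1$, $x_i\in\Cscr$, and $\alpha\ge 0$, one writes $u+\alpha v$ as a nonnegative combination of $x_1$ and $x_2$ with total weight $\lambda_1 + \alpha\lambda_2 \ge 1$, and by convexity $u + \alpha v \in (\lambda_1+\alpha\lambda_2)\Cscr \subseteq \Dscr$; the case where one of the terms vanishes is handled similarly. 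Alternatively, and more cleanly, one may simply invoke that $\Dscr$ is the antipolar-invariant object: since $\Cscr'$ is automatically ray-like (noted in the excerpt just before the bi-antipolar theorem), $\Cscr''$ is ray-like, and the computation $\Cscr' = \Dscr'$ combined with Theorem~\ref{thm:bi-antipolar} forces $\Dscr = \Cscr''$ without separately checking that $\Dscr$ is ray-like. I expect the only mild obstacle to be bookkeeping in the ray-like verification of $\Dscr$ — in particular ensuring the total weight stays $\ge 1$ and correctly treating the boundary cases $\alpha = 0$ or a zero coefficient — but this is routine given that $0\notin\Cscr$ is a blanket assumption.
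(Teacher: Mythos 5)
The paper offers no proof of this proposition at all---it simply cites McLinden and remarks that the result ``follows from the bi-antipolar theorem''---so your argument is supplying exactly the route the authors gesture at, and the two substantive computations (that $\Cscr' = \Dscr'$ for $\Dscr := \bigcup_{\lambda\ge1}\lambda\Cscr$, and that $\Dscr$ is ray-like) are correct as written.

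There is, however, one unverified hypothesis that the appeal to Theorem~\ref{thm:bi-antipolar} genuinely needs. That theorem is stated under the paper's blanket assumption that the set is nonempty, \emph{closed}, convex, and omits the origin; to conclude $\Dscr'' = \Dscr$ you must therefore also check that $\Dscr$ is closed (and convex---ray-likeness alone does not imply convexity, though the same weight computation gives it). Closedness is not free for a union of this kind and really uses $0\notin\Cscr$: if $\lambda_n x_n \to z$ with $\lambda_n\ge1$ and $x_n\in\Cscr$, then either $(\lambda_n)$ is bounded, in which case a subsequence gives $\lambda_n\to\lambda\ge1$ and $x_n\to z/\lambda\in\Cscr$, so $z\in\lambda\Cscr\subseteq\Dscr$; or $\lambda_n\to\infty$, which would force $x_n\to0\in\Cscr$, contradicting the blanket assumption. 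This check cannot be skipped, since $\Dscr''$ is automatically closed, so $\Dscr''=\Dscr$ would be false for a non-closed $\Dscr$. Relatedly, the ``cleaner alternative'' you sketch at the end is circular: from $\Cscr'=\Dscr'$ you only get $\Cscr''=\Dscr''$, and identifying $\Dscr''$ with $\Dscr$ is precisely the step that requires $\Dscr$ to be ray-like and closed; knowing that $\Cscr''$ is ray-like does not by itself force $\Cscr''=\Dscr$. So the direct verification you present as a fallback is in fact the indispensable part of the argument.
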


The next lemma relates the positive polar of a convex set, its antipolar and the recession cone of its antipolar.

\begin{lemma}\label{sec4:lem1}
  $\cl\cone(\Cscr') = \Cscr^* = (\Cscr')_\infty$.
\end{lemma}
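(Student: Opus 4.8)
The plan is to establish the chain of equalities $\cl\cone(\Cscr')=\Cscr^{*}=(\Cscr')_{\infty}$ by proving two inclusions for each identity, exploiting the defining inequality $\innerp{y}{x}\ge 1$ for $y\in\Cscr'$, $x\in\Cscr$, together with the facts that $\Cscr'$ is ray-like and that $0\notin\Cscr$. I would organize the argument around the middle set $\Cscr^{*}=\set{y\mid \innerp{y}{x}\ge 0\ \forall x\in\Cscr}$, showing $(\Cscr')_{\infty}\subseteq\Cscr^{*}$, then $\cl\cone(\Cscr')\subseteq\Cscr^{*}$, and finally $\Cscr^{*}\subseteq\cl\cone(\Cscr')$ and $\Cscr^{*}\subseteq(\Cscr')_{\infty}$, which closes the loop since $(\Cscr')_\infty$ and $\cl\cone(\Cscr')$ will each be squeezed between the same two sets.

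First I would show $\cl\cone(\Cscr')\subseteq\Cscr^{*}$: for $y\in\Cscr'$ and any $\lambda\ge 0$, $\innerp{\lambda y}{x}=\lambda\innerp{y}{x}\ge 0$ for all $x\in\Cscr$, so $\cone(\Cscr')\subseteq\Cscr^{*}$; since $\Cscr^{*}$ is closed, taking closures gives the inclusion. Next, $(\Cscr')_{\infty}\subseteq\Cscr^{*}$: if $d\in(\Cscr')_{\infty}$ then for a fixed $y_{0}\in\Cscr'$ (nonempty by the separation theorem) we have $y_{0}+\alpha d\in\Cscr'$ for all $\alpha\ge 0$, hence $\innerp{y_{0}}{x}+\alpha\innerp{d}{x}\ge 1$ for all $x\in\Cscr$ and all $\alpha\ge 0$; letting $\alpha\to\infty$ forces $\innerp{d}{x}\ge 0$, so $d\in\Cscr^{*}$. (Alternatively, since $\Cscr'$ is ray-like, $\Cscr'+\Cscr'_\infty=\Cscr'$ and one can read off the recession directions directly.)

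For the reverse inclusions I would prove $\Cscr^{*}\subseteq(\Cscr')_{\infty}$ and $\Cscr^{*}\subseteq\cl\cone(\Cscr')$. For the former: take $d\in\Cscr^{*}$ and any $y\in\Cscr'$; then for all $x\in\Cscr$ and $\alpha\ge 0$, $\innerp{y+\alpha d}{x}=\innerp{y}{x}+\alpha\innerp{d}{x}\ge 1+0=1$, so $y+\alpha d\in\Cscr'$, which shows $d$ is a recession direction of $\Cscr'$. This already gives $\Cscr^{*}=(\Cscr')_{\infty}$ once combined with the previous paragraph. For $\Cscr^{*}\subseteq\cl\cone(\Cscr')$, I would use $0\notin\Cscr$ (a blanket assumption) together with the separation theorem: there exists $\bar y$ with $\innerp{\bar y}{x}\ge \epsilon>0$ for all $x\in\Cscr$, after scaling $\bar y\in\Cscr'$. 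Then for any $d\in\Cscr^{*}$ and any $\epsilon'>0$, the point $d+\epsilon'\bar y$ satisfies $\innerp{d+\epsilon'\bar y}{x}\ge \epsilon'\epsilon>0$ for all $x\in\Cscr$; rescaling by $1/(\epsilon'\epsilon)$ lands in $\Cscr'$, so $d+\epsilon'\bar y\in\cone(\Cscr')$ for every $\epsilon'>0$, and letting $\epsilon'\to 0$ gives $d\in\cl\cone(\Cscr')$.

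The main obstacle is the inclusion $\Cscr^{*}\subseteq\cl\cone(\Cscr')$: elements of $\Cscr^{*}$ only satisfy $\innerp{d}{x}\ge 0$, not the strict $\ge 1$ needed for membership in $\Cscr'$ (for instance $0\in\Cscr^{*}$ but $0\notin\Cscr'$), so one cannot simply rescale $d$ into $\Cscr'$. The fix is the perturbation-and-limit argument above, which crucially needs a strictly positive separating functional, and this is exactly where the blanket assumption $0\notin\Cscr$ enters. Everything else is routine manipulation of the defining inequalities and the ray-like property of $\Cscr'$ noted just before the lemma; the identity $\Cscr^{*}=(\cl\cone\Cscr)^{*}=-(\cl\cone\Cscr)^{\circ}$ recorded in \S\ref{sec:polar} could also be invoked to streamline the bookkeeping, but it is not essential.
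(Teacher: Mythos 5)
Your proposal is correct and follows essentially the same route as the paper: the same four inclusions, with the same perturbation-and-limit argument ($d+\epsilon'\bar y\in\cone\Cscr'$, then $\epsilon'\to 0$) for $\Cscr^*\subseteq\cl\cone(\Cscr')$ and the same $\alpha\to\infty$ limit for $(\Cscr')_\infty\subseteq\Cscr^*$. The only cosmetic difference is that you separate a strictly positive functional from scratch, whereas the paper simply fixes an arbitrary $x_0\in\Cscr'$ (whose nonemptiness was already noted via separation); this does not change the argument.
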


\begin{proof}
  It is evident that $\cl\cone(\Cscr')\subseteq \Cscr^*$. To show
  the converse inclusion, take any $x\in \Cscr^*$ and fix an $x_0\in
  \Cscr'$.  Then for any $\tau > 0$, we have
  \begin{equation*}
    \innerp{c}{x + \tau x_0} \ge \tau\innerp{c}{x_0}\ge \tau
    \textt{for all}
    c\in \Cscr,
  \end{equation*}
  which shows that $x + \tau x_0\in \cone\Cscr'$. Taking the limit as $\tau$
  goes to $0$ shows that $x\in \cl\cone(\Cscr')$. This proves the first equality.

  Next we show the second equality, and begin with the observation
  that $\Cscr^*\subseteq (\Cscr')_\infty$. Conversely, suppose that
  $x\in (\Cscr')_\infty$ and fix any $x_0\in \Cscr'$. Then, by
  \citet[Proposition~2.1.5]{AuT03}, $x_0 + \tau x\in \Cscr'$ for all
  $\tau> 0$. Hence, for any $c\in \Cscr$,
  \begin{equation*}
    \frac{1}{\tau}\innerp{c}{x_0} + \innerp{c}{x} = \frac{1}{\tau}\innerp{c}{x_0+\tau x} \ge \frac{1}{\tau}.
  \end{equation*}
  Since this is true for all $\tau>0$, we must have $\innerp{c}{x}\ge 0$. Since $c\in \Cscr$ is arbitrary, we conclude that $x\in \Cscr^*$.
\end{proof}

\section{Antipolar calculus}
\label{sec:antipolar}

In general, it may not always be easy to obtain an explicit formula
for the Minkowski function of a given closed convex set
$\Dscr$. Hence, we derive some elements of an antipolar calculus that
allows us to express the antipolar of a more complicated set in terms
of the antipolars of its constituents.  These rules are useful for
writing down the explicit gauge duals of problems such
as~\eqref{eq:7}. Table~\ref{tab:antipolar-calculus} summarizes the
main elements of the calculus.

\begin{table}
  \centering
  \caption{The main rules of the antipolar calculus; the required assumptions are made explicit in the specific references. \label{tab:antipolar-calculus}}
  \begin{tabular}{ll}
  \toprule
  Result & Reference
  \\ \midrule
      $(A\Cscr)' = (A^*)^{-1}\Cscr'$ & Proposition~\ref{sec3:antipolarAC}
    \\$(A^{-1}\Cscr)' = \cl(A^*\Cscr')$ & Propositions~\ref{sec3:cor1} and~\ref{sec3:cor2}
    \\$(\Cscr_1\cup\Cscr_2)' = \Cscr_1' \cap \Cscr_2'$
                                     & Proposition~\ref{sec3:union}
    \\$(\Cscr_1\cap\Cscr_2)' = \cl\conv(\Cscr_1' \cup \Cscr_2')$
                                     & Proposition~\ref{sec3:intersection}
  \\ \bottomrule
  \end{tabular}
\end{table}

As a first step, the following formula gives an expression for the
antipolar of a set defined via a gauge. The formula follows
directly from the definition of polar functions.

\begin{proposition}\label{sec3:prop2}
  Let $\Cscr = \set{x | \rho(b - x)\le \sigma}$ with $0 <
  \sigma<\rho(b)$. Then
  \[
  \Cscr' = \set{y | \innerp{b}{y} - \sigma\rho^\circ(y)\ge 1}.
  \]
\end{proposition}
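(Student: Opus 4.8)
The plan is to unwind both inclusions directly from the definitions of the antipolar~\eqref{eq:1} and the polar gauge~\eqref{eq:polardef}. Write $\Cscr=\set{x\mid\rho(b-x)\le\sigma}$. For the forward inclusion, take $y\in\Cscr'$; I want to show $\innerp{b}{y}-\sigma\rho^\circ(y)\ge1$. The idea is to feed into the defining inequality $\innerp{x}{y}\ge1$ (valid for all $x\in\Cscr$) the points of the form $x=b-\sigma w$ where $w$ ranges over the sublevel set $\set{w\mid\rho(w)\le1}$; each such $x$ lies in $\Cscr$ because $\rho(b-x)=\rho(\sigma w)=\sigma\rho(w)\le\sigma$ by positive homogeneity. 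This gives $\innerp{b}{y}-\sigma\innerp{w}{y}\ge1$ for all such $w$, and taking the supremum over $w$ together with Proposition~\ref{sec2:prop1}(iii) (which identifies $\rho^\circ(y)=\sup\set{\innerp{w}{y}\mid\rho(w)\le1}$) yields $\innerp{b}{y}-\sigma\rho^\circ(y)\ge1$, as desired. Note this direction does not actually need $\sigma<\rho(b)$.

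For the reverse inclusion, take $y$ with $\innerp{b}{y}-\sigma\rho^\circ(y)\ge1$ and an arbitrary $x\in\Cscr$; I must show $\innerp{x}{y}\ge1$. Write $x=b-(b-x)$ and set $w=b-x$, so $\rho(w)\le\sigma$. Then $\innerp{x}{y}=\innerp{b}{y}-\innerp{w}{y}$, and I bound $\innerp{w}{y}$ above: by the Cauchy–Schwartz-like inequality~\eqref{eq:4} for the gauge $\rho$ and its polar, $\innerp{w}{y}\le\rho(w)\,\rho^\circ(y)\le\sigma\rho^\circ(y)$, using $\rho(w)\le\sigma$ and $\rho^\circ(y)\ge0$. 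Hence $\innerp{x}{y}\ge\innerp{b}{y}-\sigma\rho^\circ(y)\ge1$. One should be a little careful that~\eqref{eq:4} is stated for $y\in\dom\rho^\circ$; if $\rho^\circ(y)=+\infty$ the hypothesis $\innerp{b}{y}-\sigma\rho^\circ(y)\ge1$ forces $\sigma=0$, which is excluded here since $\sigma>0$, so $y\in\dom\rho^\circ$ automatically and the inequality applies.

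The condition $0<\sigma<\rho(b)$ is what guarantees the statement is a clean equivalence rather than a degenerate one: $\sigma>0$ is used as above, and $\sigma<\rho(b)$ ensures $0\notin\Cscr$ (since $\rho(b-0)=\rho(b)>\sigma$), consistent with the blanket assumption on $\Cscr$ and making $\Cscr'$ genuinely the antipolar of a set not containing the origin. I do not expect a real obstacle here — the only subtlety is the bookkeeping around infinite values of $\rho^\circ$ and making sure the sublevel-set description of $\rho^\circ$ from Proposition~\ref{sec2:prop1}(iii) is invoked correctly — so the write-up is essentially the two short computations above.
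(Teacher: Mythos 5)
Your proof is correct and takes essentially the same route as the paper: both reduce membership in $\Cscr'$ to the condition $\sup\set{\innerp{b-x}{y} \mid \rho(b-x)\le\sigma}\le\innerp{b}{y}-1$ and identify that supremum as $\sigma\rho^\circ(y)$ via Proposition~\ref{sec2:prop1}(iii) and positive homogeneity. The paper simply writes this as one chain of equivalences, whereas you unwind it into two inclusions (with the reverse direction phrased through the inequality~\eqref{eq:4}, which is the same fact); your extra care about $\rho^\circ(y)=+\infty$ is a nice touch but not a substantive difference.
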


\begin{proof}
  Note that $y\in \Cscr'$ is equivalent to $\innerp{x}{y} \ge 1$ for
  all $x\in \Cscr$. Thus, for all $x$ such that $\rho(b-x)\le\sigma$,
  \begin{equation*}
  \innerp{x-b}{y}\ge 1 - \innerp{b}{y}
    \ \Longleftrightarrow\
    \innerp{b-x}{y}\le \innerp{b}{y} - 1.
  \end{equation*}
  From Proposition~\ref{sec2:prop1}(iii), this is further equivalent to $\sigma\rho^\circ(y)\le \innerp{b}{y} - 1$. %
\end{proof}

Proposition~\ref{sec3:prop2} is very general since any closed convex
set $\Dscr$ containing the origin can be represented in the form of
$\set{x | \rho(x)\le 1}$, where $\rho(x)=\inf\set{\lambda\ge 0 | x\in
  \lambda\Dscr}$; cf.~\eqref{eq:11}.
  For conic constraints in particular, one obtains the following corollary by
  setting $\rho(x) = \delta_{-\Kscr}(x)$.

\begin{corollary}\label{sec3:prop2immedcor}
  Let $\Cscr = \set{x | x\in b + \Kscr}$ for some closed convex cone
  $\Kscr$ and a vector $b\notin -\Kscr$. Then
  \[
  \Cscr' = \set{y\in \Kscr^* | \innerp{b}{y} \ge 1}.
  \]
\end{corollary}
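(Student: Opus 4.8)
The plan is to obtain this as an immediate specialization of Proposition~\ref{sec3:prop2}, following the parenthetical hint and taking $\rho = \delta_{-\Kscr}$. First I would check that $\delta_{-\Kscr}$ is a closed gauge: it is nonnegative, vanishes at $0$ (since $0\in\Kscr$), is convex as the indicator of a convex set, and is positively homogeneous because $-\Kscr$ is a cone, so that $\delta_{-\Kscr}(\lambda x)=\lambda\,\delta_{-\Kscr}(x)$ for $\lambda>0$; closedness is inherited from that of $\Kscr$. (Equivalently, $\delta_{-\Kscr}=\gamma_{-\Kscr}$.) With this choice of $\rho$, the constraint $\rho(b-x)\le\sigma$ reads $b-x\in-\Kscr$, i.e.\ $x\in b+\Kscr$, regardless of the value of $\sigma\ge 0$; hence $\Cscr=\set{x\mid\rho(b-x)\le\sigma}$ for every such $\sigma$.

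Next I would fix the parameter. Proposition~\ref{sec3:prop2} requires $0<\sigma<\rho(b)$, and the hypothesis $b\notin-\Kscr$ is exactly what yields $\rho(b)=\delta_{-\Kscr}(b)=+\infty$; so I may take $\sigma=1$, which trivially satisfies $0<1<+\infty$. The only genuine computation is then to identify the polar $\rho^\circ$. By Proposition~\ref{sec2:prop1}(iii), $\rho^\circ(y)=\sup\set{\innerp{x}{y}\mid x\in-\Kscr}=\sigma_{-\Kscr}(y)$, the support function of the cone $-\Kscr$; since $-\Kscr$ is a cone this support function takes the value $0$ on $(-\Kscr)^\circ$ and $+\infty$ elsewhere, and $(-\Kscr)^\circ=\Kscr^*$ by the identities relating polar and positive polar cones in \S\ref{sec:polar}. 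Hence $\rho^\circ=\delta_{\Kscr^*}$.

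Substituting $\sigma=1$ and this expression for $\rho^\circ$ into the conclusion of Proposition~\ref{sec3:prop2} gives
\[
\Cscr' = \set{y\mid\innerp{b}{y}-\delta_{\Kscr^*}(y)\ge 1} = \set{y\in\Kscr^*\mid\innerp{b}{y}\ge 1},
\]
because the displayed inequality forces $\delta_{\Kscr^*}(y)<\infty$, i.e.\ $y\in\Kscr^*$, and there it reduces to $\innerp{b}{y}\ge 1$. I do not anticipate any real obstacle; the two points that require a moment of care are verifying that $\delta_{-\Kscr}$ meets the definition of a gauge and recognising that the assumption $b\notin-\Kscr$ is precisely what makes an admissible $\sigma$ available in Proposition~\ref{sec3:prop2}.
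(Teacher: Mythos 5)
Your proposal is correct and is exactly the route the paper takes: the text preceding the corollary states that it follows from Proposition~\ref{sec3:prop2} by setting $\rho=\delta_{-\Kscr}$, and you have simply filled in the (routine) details — that $\delta_{-\Kscr}$ is a closed gauge, that $b\notin-\Kscr$ makes $\rho(b)=+\infty$ so an admissible $\sigma$ exists, and that $\rho^\circ=\sigma_{-\Kscr}=\delta_{\Kscr^*}$. No gaps.
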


Note that Proposition~\ref{sec3:prop2} excludes the potentially
important case $\sigma=0$; however, Corollary~\ref{sec3:prop2immedcor}
can instead be applied by defining $\Kscr = \rho^{-1}(0) =
\set{0}$. %

\subsection{Linear transformations}

We now consider the antipolar of the image of $\Cscr$ under a linear
map $A$.

\begin{proposition}\label{sec3:antipolarAC}
  It holds that
  \[
  (A\Cscr)' = (A^*)^{-1}\Cscr'.
  \]
  Furthermore, if $\cl(A\Cscr)$ does not contain the origin, then both
  sets above are nonempty.
\end{proposition}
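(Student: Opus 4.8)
The plan is to establish the set identity directly from the definitions, and then to deduce the nonemptiness statement from the separation theorem.

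For the identity, I would simply unfold the definition~\eqref{eq:1} of the antipolar together with the definition of the adjoint. A point $y$ belongs to $(A\Cscr)'$ exactly when $\innerp{z}{y}\ge 1$ for every $z\in A\Cscr$, i.e., when $\innerp{Ax}{y}\ge 1$ for every $x\in\Cscr$. Passing $A$ to the other side of the inner product gives $\innerp{x}{A^*y}\ge 1$ for every $x\in\Cscr$, which is precisely the statement that $A^*y\in\Cscr'$, that is, $y\in(A^*)^{-1}\Cscr'$ (the preimage of $\Cscr'$ under $A^*$). Each implication is an equivalence, so the two sets coincide. There is essentially no obstacle in this part; it is a one-line chain of equivalences once~\eqref{eq:1} and $\innerp{Ax}{y}=\innerp{x}{A^*y}$ are invoked.

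For the ``furthermore'' part, I would argue as follows. The set $A\Cscr$ is convex, being the image of the convex set $\Cscr$ under a linear map, so $\cl(A\Cscr)$ is a nonempty closed convex set; by hypothesis it does not contain the origin. The separation theorem then produces a vector $y$ and a scalar $\beta>0$ with $\innerp{z}{y}\ge\beta$ for all $z\in\cl(A\Cscr)$, hence for all $z\in A\Cscr$; rescaling, $\innerp{z}{y/\beta}\ge 1$ for all $z\in A\Cscr$, so $y/\beta\in(A\Cscr)'$ and this set is nonempty. By the identity already proved, $(A^*)^{-1}\Cscr'$ is then nonempty as well. The only subtlety worth flagging — and the closest thing to an obstacle here — is that $A\Cscr$ need not be closed, which is exactly why the hypothesis is stated for $\cl(A\Cscr)$ and why one separates the origin from the closure rather than from $A\Cscr$ itself; the order-reversing property of the antipolar ($A\Cscr\subseteq\cl(A\Cscr)$ implies $(\cl(A\Cscr))'\subseteq(A\Cscr)'$) then carries the separating functional into $(A\Cscr)'$.
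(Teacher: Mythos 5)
Your proposal is correct and follows essentially the same route as the paper: the identity is the same one-line chain of equivalences from the definition of the antipolar and the adjoint, and the nonemptiness claim is obtained by separating the origin from the closed convex set $\cl(A\Cscr)$, which is exactly the ``supporting hyperplanes'' argument the paper invokes (you merely spell out the rescaling step explicitly).
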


\begin{proof}
  Note that $y\in (A\Cscr)'$ is equivalent to
  \begin{equation*}
      \innerp{y}{Ac}=\innerp{A^* y}{c} \ge 1 \textt{for all} c\in \Cscr.
  \end{equation*}
  The last relation is equivalent to $A^*y \in \Cscr'$. Hence,
  $(A\Cscr)' = (A^*)^{-1}\Cscr'$.  Furthermore, the assumption that $\cl(A\Cscr)$ does not contain the origin,
  together with an argument using supporting hyperplanes, implies $(A\Cscr)'$
  is nonempty. This completes the proof.
\end{proof}

We have the following result concerning the pre-image of
$\Cscr$.

\begin{proposition}\label{sec3:cor1}
  Suppose that $A^{-1}\Cscr\neq \emptyset$. Then
  \[
  (A^{-1}\Cscr)' = \cl(A^*\Cscr'),
  \]
  and both sets are nonempty.
\end{proposition}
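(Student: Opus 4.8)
The plan is to prove the inclusion $(A^{-1}\Cscr)' \supseteq A^*\Cscr'$ directly, and then pass to the closure on the right-hand side using the fact that the antipolar of any set is always closed; the reverse inclusion $(A^{-1}\Cscr)' \subseteq \cl(A^*\Cscr')$ will be obtained by a separation/duality argument. For the easy direction, take $y \in \Cscr'$, so $\innerp{c}{y} \ge 1$ for all $c \in \Cscr$. If $x \in A^{-1}\Cscr$, then $Ax \in \Cscr$, hence $\innerp{x}{A^*y} = \innerp{Ax}{y} \ge 1$; this shows $A^*y \in (A^{-1}\Cscr)'$. Since $(A^{-1}\Cscr)'$ is closed (it is an antipolar, hence an intersection of closed halfspaces), we get $\cl(A^*\Cscr') \subseteq (A^{-1}\Cscr)'$ at no extra cost.

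For the reverse inclusion I would argue by contradiction: suppose $\bar y \in (A^{-1}\Cscr)'$ but $\bar y \notin \cl(A^*\Cscr')$. The set $\cl(A^*\Cscr')$ is closed and convex (image of a convex set under a linear map, then closed), so by the separation theorem there is a vector $z$ and a scalar with $\innerp{\bar y}{z} < \alpha \le \innerp{w}{z}$ for all $w \in \cl(A^*\Cscr')$. I want to massage this into a statement about $Az$ relative to $\Cscr'$ and $\Cscr$. Since $A^*\Cscr'$ is a ray-like-flavored set — more precisely, $\Cscr'$ is ray-like, so $t(A^*y') \in A^*\Cscr'$ is not quite right, but $A^*(y' + \alpha y'') \in A^*\Cscr'$ for $y',y'' \in \Cscr'$, $\alpha \ge 0$, giving $A^*\Cscr'$ a recession direction structure — the separating inequality $\innerp{w}{z} \ge \alpha$ forces $\alpha \le 0$ and moreover $\innerp{A^*y'}{z} = \innerp{y'}{Az} \ge 0$ for all $y' \in \Cscr'$, i.e. $Az \in (\Cscr')^* = \Cscr^*$ by Lemma~\ref{sec4:lem1}. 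Meanwhile $\innerp{\bar y}{z} < \alpha \le 0$.

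The main obstacle is turning ``$Az \in \Cscr^*$'' together with ``$\innerp{\bar y}{z} < 0$'' into a contradiction with $\bar y \in (A^{-1}\Cscr)'$. The natural route: fix $\bar x \in A^{-1}\Cscr$ (nonempty by hypothesis), so $A\bar x \in \Cscr$. Since $Az \in \Cscr^* = (\Cscr')_\infty$, and also $\Cscr^* = \cl\cone(\Cscr')$, one can show $A\bar x + \tau (Az) \in \Cscr$ for all $\tau \ge 0$ — this uses that $Az$ is a recession direction of $\Cscr$ (because $\innerp{Az}{y'}\ge 0$ for all $y'\in\Cscr'$ and $\Cscr = \Cscr''$... wait, $\Cscr$ need not be ray-like, so instead use: $\Cscr^*$ being the dual cone means $\innerp{Az}{u}\ge 0$ for $u\in\Cscr$, which does not directly give recession of $\Cscr$). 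The cleaner fix is to instead note that $\bar x + \tau z \in A^{-1}\Cscr$ would require $A\bar x + \tau Az \in \Cscr$, which I do not have for free. So I would instead apply $\bar y \in (A^{-1}\Cscr)'$ to points of the form $\bar x$ itself and exploit homogeneity differently: since $A^{-1}\Cscr$ may fail to be ray-like, the right tool is Proposition~\ref{sec3:C''} applied after observing $(A^{-1}\Cscr)'' $ structure, or simply: $\innerp{\bar x}{\bar y}\ge 1$, and adding the separation data $\innerp{\bar y}{z} < 0$ with the ray-like structure of $\Cscr'$ to produce $y' + t\,(\text{something}) \in \Cscr'$ whose image pairs negatively with — honestly the crux is getting the recession direction on the correct side. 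I expect the actual proof invokes Proposition~\ref{sec3:antipolarAC} in the form $(A^{-1}\Cscr)' = (A^{-1}\Cscr)'$ combined with bi-antipolarity, or reduces to Proposition~\ref{sec3:cor2} (cited alongside it in the table); the honest summary is that the hard part is the closure/recession bookkeeping needed to close the reverse inclusion, and I would lean on Lemma~\ref{sec4:lem1} and the bi-antipolar theorem to handle it.
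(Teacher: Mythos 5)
Your first inclusion is correct and is exactly the paper's opening step: $A^*\Cscr'\subseteq(A^{-1}\Cscr)'$ by the direct computation $\innerp{x}{A^*y}=\innerp{Ax}{y}\ge1$, and then $\cl(A^*\Cscr')\subseteq(A^{-1}\Cscr)'$ because an antipolar is an intersection of closed halfspaces. The reverse inclusion, however, is a genuine gap: your separation argument never closes, and you say as much. Two concrete problems. First, the identification $(\Cscr')^*=\Cscr^*$ ``by Lemma~\ref{sec4:lem1}'' is wrong; that lemma gives $\cl\cone(\Cscr')=\Cscr^*$, so the dual cone of $\Cscr'$ is $(\cl\cone\Cscr')^*=(\Cscr^*)^*=\cl\cone\Cscr$, not $\Cscr^*$. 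Second, and more fundamentally, even with the correct conclusion $Az\in\cl\cone\Cscr$ (or $Az\in\Cscr^*$), neither membership makes $Az$ a recession direction of $\Cscr$ --- for a bounded $\Cscr$ (e.g.\ a ball not containing the origin) the recession cone is $\set{0}$ while $\Cscr^*$ and $\cl\cone\Cscr$ are large --- so the perturbation $\bar x+\tau z$ cannot be shown to remain in $A^{-1}\Cscr$, and no contradiction with $\bar y\in(A^{-1}\Cscr)'$ is reached. Your closing paragraph is an honest guess at the right ingredients, but a guess is not a proof.

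The paper avoids separation entirely and instead bootstraps from Proposition~\ref{sec3:antipolarAC}: after checking that $\cl(A^*\Cscr')$ does not contain the origin (which follows from your easy inclusion together with $A^{-1}\Cscr\neq\emptyset$), it applies Proposition~\ref{sec3:antipolarAC} with $A^*$ in place of $A$ and $\Cscr'$ in place of $\Cscr$ to get $(A^*\Cscr')'=A^{-1}\Cscr''$, then takes antipolars of both sides to obtain $(A^*\Cscr')''=(A^{-1}\Cscr'')'$. The left side equals $\cl(A^*\Cscr')$ by the bi-antipolar theorem, since $\Cscr'$ is ray-like, hence so is $\cl(A^*\Cscr')$, and it omits the origin. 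The right side equals $(A^{-1}\Cscr)'$ because Proposition~\ref{sec3:C''} gives $\Cscr''=\bigcup_{\lambda\ge1}\lambda\Cscr$, and scaling a set by $\lambda\ge1$ does not change its antipolar intersected over the union. This is the ``closure/recession bookkeeping'' you were missing: it is handled by the bi-antipolar theorem applied on the \emph{dual} side, where ray-likeness is automatic, rather than by separating in the primal.
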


\begin{proof}
  Recall from our blanket assumption (cf. \S\ref{sec:background}) that $\Cscr$ is a closed convex set not containing the origin.
  It follows that $\cl(A^*\Cscr')$ is nonempty. Moreover, $A^{-1}\Cscr$ is also a
  closed convex set that does not contain the origin. Hence,
  $(A^{-1}\Cscr)'$ is also nonempty.

  We next show that $\cl(A^*\Cscr')$ does not contain the
  origin. Suppose that $y\in A^*\Cscr'$ so that $y = A^*u$ for some
  $u\in \Cscr'$. Then for any $x\in A^{-1}\Cscr$, we have $Ax\in
  \Cscr$ and thus
  \[
  \innerp{x}{y} = \innerp{x}{A^*u} = \innerp{Ax}{u} \ge 1,
  \]
  which shows that $y\in (A^{-1}\Cscr)'$. Thus, we have
  $A^*\Cscr'\subseteq (A^{-1}\Cscr)'$ and consequently that
    $\cl(A^*\Cscr)\subseteq (A^{-1}\Cscr)'$.  Since the set $A^{-1}\Cscr$ is nonempty,
  $(A^{-1}\Cscr)'$ does not contain the origin. Hence, it follows that
  $\cl(A^*\Cscr')$ also does not contain the origin.

  Now apply Proposition~\ref{sec3:antipolarAC} with $A^*$ in place
  of $A$, and $\Cscr'$ in place of $\Cscr$, to obtain
  \begin{equation*}
    (A^*\Cscr')' = A^{-1}\Cscr''.
  \end{equation*}
  Taking the antipolar on both sides of the above relation, we arrive at
  \begin{equation}\label{sec3:2sidepolar}
    (A^*\Cscr')'' = (A^{-1}\Cscr'')'.
  \end{equation}
  Since $\Cscr'$ is ray-like, it follows that $\cl(A^*\Cscr')$ is
  also ray-like. Since $\cl(A^*\Cscr')$ does not contain the
  origin, we conclude from the bi-antipolar theorem that $(A^*\Cscr')'' = \cl(A^*\Cscr')$. Moreover, we have
  \[
  \big(A^{-1}\Cscr''\big)' =
  \Bigg(\bigcup_{\lambda\ge 1}\lambda
  A^{-1}\Cscr
  \Bigg)' = \big(A^{-1}\Cscr\big)',
  \]
  where the first equality follows from Proposition~\ref{sec3:C''},
  and the second equality can be verified directly from
  definition. The conclusion now follows from the above discussion and
  \eqref{sec3:2sidepolar}.
\end{proof}

We have the following further consequence.

\begin{proposition}\label{sec3:cor2}
  Suppose that $A^{-1}\Cscr\neq \emptyset$, and either $\Cscr$ is
  polyhedral or $\ri\Cscr\cap \range A\neq\emptyset$.  Then
  $(A^{-1}\Cscr)'$ is nonempty and
  \begin{equation*}\label{sec3:antipolarinvAC}
    \big(A^{-1}\Cscr\big)' = A^*\Cscr'.
  \end{equation*}
\end{proposition}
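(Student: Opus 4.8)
The plan is to reduce the statement to a closedness assertion and then settle that separately in the polyhedral and relative-interior cases. Proposition~\ref{sec3:cor1} already supplies $(A^{-1}\Cscr)' = \cl(A^*\Cscr')$ together with the nonemptiness of both sides, using only $A^{-1}\Cscr\neq\emptyset$; so it remains to show that, under either of the two extra hypotheses, $A^*\Cscr'$ is already closed, which makes the closure operation in Proposition~\ref{sec3:cor1} redundant.

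If $\Cscr$ is polyhedral, I would first observe that $\Cscr'$ is polyhedral. Writing $\Cscr$ in finitely generated form $\Cscr = \conv\{a_1,\ldots,a_p\} + \cone\{b_1,\ldots,b_q\}$ via \citet[Theorem~19.1]{Roc70}, the condition $\innerp{y}{x}\ge 1$ for all $x\in\Cscr$ is equivalent to the finite linear system $\innerp{y}{a_i}\ge 1$ ($i=1,\ldots,p$) together with $\innerp{y}{b_j}\ge 0$ ($j=1,\ldots,q$); hence $\Cscr'$ is polyhedral, and then $A^*\Cscr'$ is polyhedral, in particular closed, by \citet[Theorem~19.3]{Roc70}.

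For the case $\ri\Cscr\cap\range A\neq\emptyset$ I would appeal to the image theorem for closed convex sets \citep[Theorem~9.1]{Roc70}: a linear image of a closed convex set is closed provided every recession direction of the set that lies in the kernel of the map in fact belongs to the lineality space of the set. Here the set is $\Cscr'$ and the map is $A^*$, and by Lemma~\ref{sec4:lem1} we have $(\Cscr')_\infty = \Cscr^*$; so I must show that any $d\in\Cscr^*\cap\ker A^* = \Cscr^*\cap(\range A)^\perp$ satisfies $y+td\in\Cscr'$ for every $y\in\Cscr'$ and every $t\in\R$. Choosing $\bar c\in\ri\Cscr\cap\range A$, say $\bar c = A\bar x$, we get $\innerp{\bar c}{d} = \innerp{\bar x}{A^*d} = 0$; thus the linear functional $\innerp{\cdot}{d}$ is nonnegative on $\Cscr$ (since $d\in\Cscr^*$) and vanishes at the relative interior point $\bar c$; because $\bar c+\epsilon(\bar c-c)\in\Cscr$ for small $\epsilon>0$ whenever $c\in\Cscr$, nonnegativity at $\bar c+\epsilon(\bar c-c)$ forces $\innerp{c}{d}=0$ for every $c\in\Cscr$. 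Consequently $\innerp{c}{y+td} = \innerp{c}{y}\ge 1$ for all $c\in\Cscr$, i.e.\ $y+td\in\Cscr'$, which is exactly the lineality condition required. With $A^*\Cscr'$ shown closed, Proposition~\ref{sec3:cor1} gives $(A^{-1}\Cscr)' = \cl(A^*\Cscr') = A^*\Cscr'$.

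The steps I expect to be routine are the finite-generation bookkeeping in the polyhedral case and the one-line fact that a linear functional nonnegative on a convex set and vanishing at a relative interior point vanishes on the whole set. The step needing care is verifying the hypothesis of the image theorem in the second case: one must check that $\Cscr^*\cap\ker A^*$ is contained in the \emph{lineality space} of $\Cscr'$, not merely in its recession cone, and this is precisely where the relative-interior assumption is used rather than the weaker $\Cscr\cap\range A\neq\emptyset$ — mirroring the role of the analogous hypothesis in Proposition~\ref{sec2:prop1.5}.
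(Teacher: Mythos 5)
Your proposal is correct, and its overall architecture matches the paper's: both reduce the statement to the closedness of $A^*\Cscr'$ and then invoke Proposition~\ref{sec3:cor1}. The two arguments diverge in how that closedness is established. The paper first applies Abrams's theorem to trade $A^*\Cscr'$ for the set $\Cscr'+\ker A^*$, handles the polyhedral case by noting that a polyhedron plus a subspace is closed, and in the relative-interior case identifies $\Cscr'+\ker A^*$ with the subdifferential $\partial(\sigma_{\Cscr'}+\delta_{\range A})(0)$ via \citet[Theorem~23.8]{Roc70}, after verifying the needed overlap of relative interiors through the computation $\ri\dom\sigma_{\Cscr'}=-\ri\cone\Cscr$ (which, like your argument, leans on Lemma~\ref{sec4:lem1}). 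You instead work directly on the image $A^*\Cscr'$ with the classical closedness criterion for linear images of closed convex sets, verifying its hypothesis by showing that every $d\in\Cscr^*\cap\ker A^*$ lies in the lineality space of $\Cscr'$; your verification --- that $\innerp{\cdot}{d}$ is nonnegative on $\Cscr$ and vanishes at a point of $\ri\Cscr\cap\range A$, hence vanishes on all of $\Cscr$ --- is sound and is precisely where the relative-interior hypothesis enters, just as you flag. Your route is somewhat more elementary and self-contained (no Abrams's theorem, no subdifferential calculus), and it makes transparent the geometric role of the constraint qualification; the paper's route has the advantage of running both cases through a single closedness reduction and of reusing machinery (support functions, recession cones, Theorem~23.8) already deployed elsewhere in the paper. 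Your polyhedral case, arguing that $\Cscr'$ is cut out by finitely many inequalities from a finite generating representation of $\Cscr$ and that linear images of polyhedra are polyhedral, is the same routine observation the paper makes, only without the detour through $\Cscr'+\ker A^*$.
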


\begin{proof}
  We will show that $A^*\Cscr'$ is closed under the assumption of this
  proposition.  Then the conclusion follows immediately from
  Proposition~\ref{sec3:cor1}.

  Abrams's theorem \cite[Lemma~3.1]{Berman:1973} asserts that
  $A^*\Cscr'$ is closed if and only if $\Cscr' + \ker A^*$ is
  closed. We will thus establish the closedness of the latter set.

  Suppose that $\Cscr$ is a polyhedral. Then it is routine to show
  that $\Cscr'$ is also a polyhedral and thus $\Cscr' + \ker A^*$ is
  closed. Hence, the conclusion of the corollary holds under this
  assumption.

  Finally, suppose that $\ri \Cscr\cap \range A\neq\emptyset$.  From
  \citet[Theorem~2.2.1]{AuT03} and the bipolar theorem, we have
  $\cl\dom\sigma_{\Cscr'} = [(\Cscr')_\infty]^\circ$, where
  $(\Cscr')_\infty$ is the recession cone of $\Cscr'$, which turns out
  to be just $\Cscr^*$ by Lemma~\ref{sec4:lem1}. From this and the
  bipolar theorem, we see further that
  \[
  \cl\dom\sigma_{\Cscr'} = [\Cscr^*]^\circ = [(\cl\cone\Cscr)^*]^\circ = -\cl\cone\Cscr,
  \]
  and hence $\ri\dom\sigma_{\Cscr'} = -\ri\cone\Cscr$, thanks to
  \citet[Theorem~6.3]{Roc70}. Furthermore, the assumption that
  $\ri\Cscr\cap \range A\neq \emptyset$ is equivalent to
  $\ri\cone\Cscr\cap \range A\neq \emptyset$, since $\ri\cone\Cscr =
  \bigcup_{\lambda > 0}\lambda\ri\Cscr$; see
  \citet[p.~50]{Roc70}. Thus, the assumption $\ri\Cscr\cap \range
  A\neq\emptyset$ together with \citet[Theorem~23.8]{Roc70} imply that
  \begin{equation*}\label{sec3:sum1}
    \Cscr' + \ker A^* = \partial \sigma_{\Cscr'}(0) + \partial\delta_{\range  A}(0) = \partial(\sigma_{\Cscr'} + \delta_{\range A})(0).
  \end{equation*}
  In particular, $\Cscr' + \ker A^*$ is closed.
\end{proof}

\subsection{Unions and intersections}

Other important set operations are union and intersection, which we
discuss here.  \citet[Appendix~A.1]{RuysW79} outline additional rules.

\begin{proposition}\label{sec3:union}
  Let $\Cscr_1$ and $\Cscr_2$ be nonempty closed convex sets. Then
  \[
  (\Cscr_1\cup \Cscr_2)' = \Cscr_1'\cap \Cscr_2'.
  \]
  If $0\notin \cl\conv(\Cscr_1\cup \Cscr_2)$, then the sets above
  are nonempty.
\end{proposition}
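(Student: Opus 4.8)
The plan is to read off the set identity directly from the definition~\eqref{eq:1} of the antipolar, and then to deduce nonemptiness from the separation-theorem observation recorded immediately after~\eqref{eq:1}.

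For the identity, I would unwind definitions: $y\in(\Cscr_1\cup\Cscr_2)'$ means precisely that $\innerp{y}{x}\ge1$ for every $x\in\Cscr_1\cup\Cscr_2$, which is the same as saying $\innerp{y}{x}\ge1$ for every $x\in\Cscr_1$ \emph{and} $\innerp{y}{x}\ge1$ for every $x\in\Cscr_2$. The first condition is exactly $y\in\Cscr_1'$ and the second is exactly $y\in\Cscr_2'$, so $(\Cscr_1\cup\Cscr_2)'=\Cscr_1'\cap\Cscr_2'$. Note that neither convexity nor closedness of the $\Cscr_i$ plays any role in this part.

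For nonemptiness, I would first observe that, since each defining condition $\innerp{y}{\cdot}\ge1$ is affine and continuous, one has $(\Cscr_1\cup\Cscr_2)'=\big(\cl\conv(\Cscr_1\cup\Cscr_2)\big)'$: an affine functional is $\ge1$ on a set if and only if it is $\ge1$ on the closed convex hull of that set. Under the hypothesis $0\notin\cl\conv(\Cscr_1\cup\Cscr_2)$, the set $\Dscr:=\cl\conv(\Cscr_1\cup\Cscr_2)$ is a nonempty closed convex set not containing the origin, so — exactly as noted right after~\eqref{eq:1} — strict separation of $0$ from $\Dscr$ yields, after rescaling the separating functional, a point of $\Dscr'$. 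Hence $(\Cscr_1\cup\Cscr_2)'=\Dscr'\neq\emptyset$, and consequently $\Cscr_1'\cap\Cscr_2'$ is nonempty as well.

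There is essentially no hard step here; the only point that needs a line of justification is the passage $(\Cscr_1\cup\Cscr_2)'=\big(\cl\conv(\Cscr_1\cup\Cscr_2)\big)'$, which is immediate from the linearity and continuity of the inner product.
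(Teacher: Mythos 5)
Your argument is correct and follows essentially the same route as the paper's proof: the identity is read off directly from the definition of the antipolar, and nonemptiness follows from the identification $(\Cscr_1\cup\Cscr_2)'=[\cl\conv(\Cscr_1\cup\Cscr_2)]'$ together with the separation-based nonemptiness of the antipolar of a closed convex set not containing the origin. Your extra line justifying that identification (affine functionals being $\ge 1$ on a set iff on its closed convex hull) is a welcome, if minor, elaboration of what the paper leaves implicit.
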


\begin{proof}
  Note that $y\in (\Cscr_1\cup \Cscr_2)'$ is equivalent to
  $\innerp{y}{x} \ge 1$ for all $x\in \Cscr_1$ as well as $x\in
  \Cscr_2$.  This is equivalent to $y\in \Cscr_1'\cap \Cscr_2'$.
  Moreover, if we assume further that $0\notin \cl\conv(\Cscr_1\cup
  \Cscr_2)$, then $(\Cscr_1\cup \Cscr_2)' = [\cl\conv(\Cscr_1\cup
  \Cscr_2)]'$ is nonempty. This completes the proof.
\end{proof}

We now consider the antipolar of intersections. Note that it is
necessary to assume that both $\Cscr_1$ and $\Cscr_2$ are ray-like,
which was missing from \citet[Property~A.5]{RuysW79}. (The necessity
of this assumption is demonstrated by
Example~\ref{ex:intersection-ray-like}, which follows the
proposition.)

\begin{proposition}\label{sec3:intersection}
  Let $\Cscr_1$ and $\Cscr_2$ be nonempty ray-like closed convex sets
  not containing the origin. Suppose further that $\Cscr_1\cap
  \Cscr_2\neq \emptyset$. Then
  \[
  (\Cscr_1\cap \Cscr_2)' = \cl\conv(\Cscr_1'\cup \Cscr_2'),
  \]
  and both sets are nonempty.
\end{proposition}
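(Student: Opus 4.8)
The plan is to prove the two inclusions separately, using the bi-antipolar theorem (Theorem~\ref{thm:bi-antipolar}) as the main engine, together with the union rule (Proposition~\ref{sec3:union}) applied in reverse. First I would dispatch the nonemptiness claim: since $\Cscr_1\cap\Cscr_2$ is a nonempty closed convex set not containing the origin (being a subset of $\Cscr_1$), its antipolar $(\Cscr_1\cap\Cscr_2)'$ is nonempty by the separation argument already invoked repeatedly above; and $\cl\conv(\Cscr_1'\cup\Cscr_2')\supseteq\Cscr_1'$, which is nonempty, so both sides are nonempty.

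For the inclusion $(\Cscr_1\cap\Cscr_2)'\supseteq\cl\conv(\Cscr_1'\cup\Cscr_2')$, I would observe that $\Cscr_1\cap\Cscr_2\subseteq\Cscr_i$ implies $(\Cscr_1\cap\Cscr_2)'\supseteq\Cscr_i'$ for $i=1,2$ (antipolar reverses inclusions, directly from the definition~\eqref{eq:1}), hence $(\Cscr_1\cap\Cscr_2)'\supseteq\Cscr_1'\cup\Cscr_2'$. Since an antipolar set is always closed and convex (indeed ray-like), it contains the closed convex hull of any subset, giving $(\Cscr_1\cap\Cscr_2)'\supseteq\cl\conv(\Cscr_1'\cup\Cscr_2')$.

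The reverse inclusion is where the ray-like hypothesis is essential. The idea is to take antipolars of the easy inclusion just proved, but more cleanly: apply the union rule of Proposition~\ref{sec3:union} to the ray-like sets $\Cscr_1'$ and $\Cscr_2'$, noting that $(\Cscr_1'\cup\Cscr_2')' = \Cscr_1''\cap\Cscr_2'' = \Cscr_1\cap\Cscr_2$, where the last equality uses the bi-antipolar theorem \emph{and the hypothesis that each $\Cscr_i$ is ray-like}. Taking antipolars once more yields $(\Cscr_1'\cup\Cscr_2')'' = (\Cscr_1\cap\Cscr_2)'$. It remains to identify the left-hand side with $\cl\conv(\Cscr_1'\cup\Cscr_2')$: in general $\Dscr''$ is the smallest ray-like closed convex set containing $\Dscr$, and when $0\notin\cl\conv\Dscr$ this coincides with $\cl\conv\Dscr$ (one checks $\cl\conv(\Cscr_1'\cup\Cscr_2')$ is already ray-like, since the $\Cscr_i'$ are ray-like cones-of-directions in the appropriate sense, or invoke that a closed convex set containing a ray-like dense subset is ray-like). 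Concretely, I would verify $0\notin\cl\conv(\Cscr_1'\cup\Cscr_2')$ by exhibiting a strictly separating functional — any $x\in\Cscr_1\cap\Cscr_2$ satisfies $\innerp{x}{y}\ge1$ for all $y\in\Cscr_1'\cup\Cscr_2'$, hence for all $y$ in its closed convex hull, so $0$ is excluded — and then conclude via the bi-antipolar theorem applied to $\Dscr=\cl\conv(\Cscr_1'\cup\Cscr_2')$.

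The main obstacle I anticipate is the bookkeeping around the ray-like property of $\cl\conv(\Cscr_1'\cup\Cscr_2')$: one must be careful that the convex hull of two ray-like sets is ray-like (which uses that both are ray-like, closing the convex combination under adding nonnegative multiples of any element of \emph{either} set), and that this property survives taking closures. This is exactly the point that fails when only one of the $\Cscr_i$ is assumed ray-like, which is why Example~\ref{ex:intersection-ray-like} is needed; I would make sure the proof pinpoints where both ray-like hypotheses are consumed, namely in $\Cscr_i''=\Cscr_i$ and in the ray-likeness of the convex hull.
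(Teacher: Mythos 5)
Your proposal is correct and follows essentially the same route as the paper: apply the union rule (Proposition~\ref{sec3:union}) to $\Cscr_1'$ and $\Cscr_2'$, use the bi-antipolar theorem together with the ray-like hypothesis to get $(\Cscr_1'\cup\Cscr_2')'=\Cscr_1\cap\Cscr_2$, take antipolars again, and identify $(\Cscr_1'\cup\Cscr_2')''$ with $\cl\conv(\Cscr_1'\cup\Cscr_2')$ after checking that this set is a ray-like closed convex set excluding the origin (the exclusion being verified, as you do, via $\Cscr_1'\cup\Cscr_2'\subseteq(\Cscr_1\cap\Cscr_2)'$). The one step you flag as delicate --- that the closed convex hull of a union of ray-like sets is ray-like --- is precisely the point the paper also states as an ``observation'' without further detail, so your instinct to spell it out is sound but does not change the argument.
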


\begin{proof}
  From the fact that both $\Cscr_1$ and $\Cscr_2$ are closed convex sets not containing the origin,
  it follows that $\Cscr_1'$ and $\Cscr_2'$ are nonempty and hence $\cl{\rm conv}(\Cscr_1'\cup \Cscr_2')\neq \emptyset$.
  Moreover, because $\Cscr_1\cap \Cscr_2$ does not contain
  the origin, $(\Cscr_1\cap \Cscr_2)'$ is also nonempty.

  We first show that $\cl{\rm conv}(\Cscr_1'\cup \Cscr_2')$ does not
  contain the origin. To this end, let $y\in \Cscr_1'\cup
  \Cscr_2'$. For any $x\in \Cscr_1\cap \Cscr_2$, we have
  $\innerp{y}{x} \ge 1$, which shows that $\Cscr_1'\cup
  \Cscr_2'\subseteq (\Cscr_1\cap \Cscr_2)'$, and hence
  $\cl\conv(\Cscr_1'\cup \Cscr_2')\subseteq (\Cscr_1\cap \Cscr_2)'$.
  Since $\Cscr_1\cap \Cscr_2$ is nonempty, $(\Cscr_1\cap \Cscr_2)'$
  does not contain the origin. Consequently, $\cl\conv(\Cscr_1'\cup
  \Cscr_2')$ does not contain the origin, as claimed.

  Now apply Proposition~\ref{sec3:union}, with $\Cscr_1'$ in place of
  $\Cscr_1$ and $\Cscr_2'$ in place of $\Cscr_2$, to obtain
  \[
  (\Cscr_1'\cup \Cscr_2')' = \Cscr_1''\cap \Cscr_2'' = \Cscr_1\cap
  \Cscr_2.
  \]
  Take the antipolar of both sides to obtain
  \[
  (\Cscr_1\cap \Cscr_2)' = (\Cscr_1'\cup \Cscr_2')'' =
  [\cl\conv(\Cscr_1'\cup \Cscr_2')]'' = \cl\conv(\Cscr_1'\cup
  \Cscr_2'),
  \]
  where the second equality follows from the definition of antipolar,
  and the third equality follows from the observation that
  $\cl\conv(\Cscr_1'\cup \Cscr_2')$ is a nonempty ray-like closed
  convex set not containing the origin. This completes the proof.
\end{proof}

The following counter-example shows that the requirement that
$\Cscr_1$ and $\Cscr_2$ are ray-like cannot be removed from
Proposition~\ref{sec3:intersection}.

\begin{example}[Set intersection and the ray-like property]
  \label{ex:intersection-ray-like}
  Consider the sets
  \[
  \Cscr_1 = \set{(x_1,x_2)|1-x_1\le x_2\le x_1-1}
  \textt{and}
  \Cscr_2 = \set{(x_1,x_2)| x_1=1}.
  \]
  Define $H_1 = \set{(x_1,x_2)| x_1+x_2\ge 1}$ and $H_2 =
  \set{(x_1,x_2)| x_1-x_2\ge 1}$ so that $\Cscr_1 = H_1\cap H_2$. Clearly
  the set $\Cscr_2$ is not ray-like, while the sets $\Cscr_1$, $H_1$,
  and $H_2$ are. Moreover, all four sets do not contain the origin.
  Furthermore, $\Cscr_1\cap \Cscr_2$ is the singleton $\set{(1,0)}$,
  and hence a direct computation shows that $(\Cscr_1\cap \Cscr_2)' =
  \set{(y_1,y_2)|y_1\ge 1}$.

  Next, it follows directly from the antipolar definition that $\Cscr_2' =
  \set{(y_1,0)|y_1\ge 1}$.  Also note that $H_1 =
  L_1^{-1}I$, where $L_1(x_1,x_2) = x_1+x_2$ and $I = \set{u|u\ge
    1}$. Thus, by Proposition~\ref{sec3:cor2}, $H_1' =
  \set{(y_1,y_1)|y_1\ge 1}$.  Similarly, $H_2' =
  \set{(y_1,-y_1)|y_1\ge 1}$. Because $H_1$ and $H_2$ are ray-like, it follows from Proposition~\ref{sec3:intersection} that
  \begin{equation*}
    \begin{split}
      \Cscr_1' = (H_1\cap H_2) ' = \cl\conv(H_1'\cup H_2'),
    \end{split}
  \end{equation*}
  which contains $\Cscr_2'$.  Thus,
  \[
  \cl\conv(\Cscr_1'\cup \Cscr_2') = \Cscr_1'\subsetneq \set{(y_1,y_2)|y_1\ge 1} = (\Cscr_1\cap \Cscr_2)'.
  \]
\end{example}
\vspace{-\baselineskip}

\section{Duality derivations}
\label{sec:duality}

We derive in this section the gauge and Lagrange duals of the primal
problem~\eqref{eq:7}. Let
\begin{equation}\label{sec4:C}
  \Cscr = \set{x|\rho(b-Ax)\le \sigma}
\end{equation}
denote the constraint set, where $\rho$ is a closed gauge and $0\le \sigma < \rho(b)$. We also consider the associated
set
\begin{equation}\label{sec4:C0}
  \Cscr_0 = \set{u|\rho(b-u)\le \sigma},
\end{equation}
and note that $\Cscr = A^{-1}\Cscr_0$.
Recall from our blanket assumption in \S\ref{sec:background} that when $\sigma = 0$, we only consider
closed gauges $\rho$ with $\rho^{-1}(0) = \set0$.

\subsection{The gauge dual} \label{sec:gauge-duality}

We consider two approaches for deriving the gauge dual
of~\eqref{eq:7}. The first uses explicitly the abstract definition of
the gauge dual~\eqref{g-dual}. The second approach redefines the
objective function to also contain an indicator for the nonlinear
gauge $\rho$ where $\Cscr$ is an affine set. This alternative approach
is instructive, because it illustrates the modeling choices that are
available when working with gauge functions.

\subsubsection{First approach}

The following combines Proposition~\ref{sec3:cor2} with
Proposition~\ref{sec3:prop2}, and gives an explicit expression for the
antipolar of $\Cscr$ when $\sigma>0$.

\begin{corollary} \label{cor:rho-antipolar} Suppose that $\Cscr$ is
  given by~\eqref{sec4:C}, where $0<\sigma<\rho(b)$, and $\Cscr_0$
  is given by~\eqref{sec4:C0}.  If $\Cscr_0$ is polyhedral, or
  $\ri\Cscr_0\cap\range A\neq \emptyset$, then
  \[
  \Cscr' = \set{A^*y |  \innerp{b}{y} - \sigma\rho^\circ(y)\ge 1}.
  \]
\end{corollary}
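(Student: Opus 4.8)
The plan is to derive the formula by combining the two immediately preceding results with the identity $\Cscr = A^{-1}\Cscr_0$ recorded just after~\eqref{sec4:C0}: first compute the antipolar of the ``unmapped'' set $\Cscr_0$ using Proposition~\ref{sec3:prop2}, and then transport this expression through the linear pre-image using Proposition~\ref{sec3:cor2}.

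First I would check that $\Cscr_0$ satisfies the standing hypotheses required to invoke those propositions, namely that it is a nonempty closed convex set not containing the origin. Convexity and closedness are immediate, since $\Cscr_0$ is a sublevel set of the closed convex function $u\mapsto\rho(b-u)$; it is nonempty because $b\in\Cscr_0$ (as $\rho(0)=0\le\sigma$); and it excludes the origin because $\rho(b)>\sigma$ by assumption. Since $0<\sigma<\rho(b)$, Proposition~\ref{sec3:prop2} applies verbatim, with $\Cscr_0$ playing the role of its $\Cscr$ and with $b,\rho,\sigma$ unchanged, and yields
\[
\Cscr_0' = \set{y \mid \innerp{b}{y} - \sigma\rho^\circ(y)\ge 1}.
\]
Next I would apply Proposition~\ref{sec3:cor2} with $\Cscr_0$ in place of $\Cscr$ and the same linear map $A$. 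Its hypothesis $A^{-1}\Cscr_0\neq\emptyset$ holds because $A^{-1}\Cscr_0=\Cscr$ is nonempty by our blanket assumption on $\Cscr$ (and is in any case forced by $\ri\Cscr_0\cap\range A\neq\emptyset$ when that branch is in effect), while the remaining structural hypothesis---that $\Cscr_0$ be polyhedral or that $\ri\Cscr_0\cap\range A\neq\emptyset$---is exactly what the corollary assumes. Proposition~\ref{sec3:cor2} then gives $\Cscr' = (A^{-1}\Cscr_0)' = A^*\Cscr_0'$, and substituting the expression for $\Cscr_0'$ just obtained produces
\[
\Cscr' = A^*\set{y \mid \innerp{b}{y} - \sigma\rho^\circ(y)\ge 1} = \set{A^*y \mid \innerp{b}{y} - \sigma\rho^\circ(y)\ge 1},
\]
which is the claim.

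There is no substantive obstacle beyond bookkeeping: the entire content lies in verifying that the hypotheses of Propositions~\ref{sec3:prop2} and~\ref{sec3:cor2} hold when stated for $\Cscr_0$ rather than $\Cscr$, and in noting that the polyhedrality / relative-interior condition in the corollary is imposed on $\Cscr_0$ (the set \emph{before} applying $A$), which is precisely the form in which Proposition~\ref{sec3:cor2} requires it. The one point deserving a moment's care is confirming that $0\notin\Cscr_0$ and that $\Cscr_0$ is nonempty, since these facts underlie the nonemptiness of all the antipolars appearing in the argument.
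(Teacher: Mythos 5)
Your proposal is correct and follows exactly the route the paper intends: the paper states the corollary as the combination of Proposition~\ref{sec3:prop2} (applied to $\Cscr_0$) with Proposition~\ref{sec3:cor2} (applied to $\Cscr = A^{-1}\Cscr_0$), and you have simply filled in the hypothesis-checking that the paper leaves implicit. The verification that $\Cscr_0$ is nonempty, closed, convex, and excludes the origin is exactly the right bookkeeping to make the citation of those two propositions legitimate.
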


As an aside, we present the following
result, which follows from
Corollary~\ref{sec3:prop2immedcor} and Proposition~\ref{sec3:cor2}. It concerns a general closed convex cone $\Kscr$.

\begin{corollary}
  \label{cor:rho-axb-antipolar}
  Suppose that $\Cscr = \set{x |  Ax - b\in \Kscr}$ for some closed convex
  cone $\Kscr$ and $b\notin -\Kscr$. If $\Kscr$ is polyhedral, or $(b + \ri\Kscr)\cap\range A\neq \emptyset$, then
  \[
  \Cscr' =\set{A^*y |  \innerp{b}{y} \ge 1,\ y\in \Kscr^*}.
  \]
\end{corollary}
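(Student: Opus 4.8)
The plan is to reduce this to the two results it cites, exactly as suggested. Write $\Cscr = A^{-1}\Cscr_0$ where $\Cscr_0 = \set{u \mid u - b\in\Kscr} = b + \Kscr$, so that the constraint $Ax - b\in\Kscr$ is the statement $Ax\in\Cscr_0$. The first step is to compute the antipolar $\Cscr_0'$ using Corollary~\ref{sec3:prop2immedcor}: since $b\notin -\Kscr$, that corollary gives
\[
\Cscr_0' = \set{y\in\Kscr^* \mid \innerp{b}{y}\ge 1}.
\]
The second step is to pull this back through $A$ using Proposition~\ref{sec3:cor2}, which states that if $A^{-1}\Cscr_0\neq\emptyset$ and either $\Cscr_0$ is polyhedral or $\ri\Cscr_0\cap\range A\neq\emptyset$, then $(A^{-1}\Cscr_0)' = A^*\Cscr_0'$. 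Combining the two displays yields $\Cscr' = A^*\Cscr_0' = \set{A^*y \mid \innerp{b}{y}\ge 1,\ y\in\Kscr^*}$, which is the claimed formula; nonemptiness of both sets comes along from Proposition~\ref{sec3:cor2} as well.

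The one point requiring care is matching the hypotheses. Proposition~\ref{sec3:cor2} needs $\Cscr_0$ polyhedral (resp.\ $\ri\Cscr_0\cap\range A\neq\emptyset$), whereas the corollary's hypothesis is stated as $\Kscr$ polyhedral (resp.\ $(b+\ri\Kscr)\cap\range A\neq\emptyset$). For the polyhedral case, $\Cscr_0 = b + \Kscr$ is polyhedral precisely when $\Kscr$ is. For the relative-interior case, translation commutes with relative interior, so $\ri\Cscr_0 = \ri(b+\Kscr) = b + \ri\Kscr$, and hence $\ri\Cscr_0\cap\range A\neq\emptyset$ is exactly $(b+\ri\Kscr)\cap\range A\neq\emptyset$. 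Also note the standing assumption that $\Cscr$ is nonempty is in force, so $A^{-1}\Cscr_0\neq\emptyset$ is automatic. These are all routine, so I do not expect any genuine obstacle; the proof is essentially a two-line citation chain once the hypothesis translation is spelled out.

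I would therefore write the proof as: apply Corollary~\ref{sec3:prop2immedcor} with $\Cscr_0 = b+\Kscr$ in place of $\Cscr$ to get $\Cscr_0'$; observe that the hypotheses of the present corollary are, after the translation $\Cscr_0 = b+\Kscr$, exactly the hypotheses of Proposition~\ref{sec3:cor2} applied to $\Cscr_0$; then invoke that proposition with $\Cscr_0$ in place of $\Cscr$ to obtain $\Cscr' = (A^{-1}\Cscr_0)' = A^*\Cscr_0'$, and substitute.
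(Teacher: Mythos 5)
Your proposal is correct and is exactly the argument the paper intends: the paper gives no explicit proof, stating only that the corollary ``follows from Corollary~\ref{sec3:prop2immedcor} and Proposition~\ref{sec3:cor2},'' which is precisely your citation chain with the translation $\Cscr_0 = b+\Kscr$, $\ri\Cscr_0 = b+\ri\Kscr$. Your extra care in checking that $b\notin-\Kscr$ gives $0\notin\Cscr_0$ and that nonemptiness of $A^{-1}\Cscr_0$ follows from the blanket assumption is a welcome bit of explicitness, but it is the same route.
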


These results can be used to obtain an explicit representation of the
gauge dual problem. We rely on the antipolar calculus developed in
\S\ref{sec:antipolar}. Assume that
\begin{equation}\label{eq:dual-assump}
  \hbox{$\Cscr_0$ is polyhedral,}
    \textt{or}
  \ri\Cscr_0\cap\range A\neq \emptyset.
\end{equation}
Consider separately the cases $\sigma > 0$ and $\sigma
= 0$.

\paragraph{Case 1: $\sigma > 0$} Apply
Corollary~\ref{cor:rho-antipolar} to derive the antipolar set
\begin{equation}\label{sec4:rep1}
  \Cscr' = \set{A^*y | \langle b,y\rangle - \sigma\rho^\circ(y)\ge 1}.
\end{equation}

\paragraph{Case 2: $\sigma = 0$} Here we use the blanket assumption (see \S\ref{sec:background}) that $\rho^{-1}(0) = \set{0}$, and in that case, $\Cscr = \set{x|Ax= b}$. Apply Corollary~\ref{cor:rho-axb-antipolar} with $\Kscr=\set{0}$ to obtain
\begin{equation}\label{sec4:rep2}
  \Cscr' = \set{A^*y | \langle b,y\rangle\ge 1}.
\end{equation}
Since $\rho^{-1}(0)=\set{0}$ and $\rho$ is
closed, we conclude from
Proposition~\ref{sec2:prop1}(v) that $\dom\rho^\circ
= \Xscr$. Hence, \eqref{sec4:rep2} can be seen as a special case of
\eqref{sec4:rep1} with $\sigma = 0$.

These two cases can be combined, and we see that when \eqref{eq:dual-assump} holds, the gauge dual problem
\eqref{g-dual} for \eqref{eq:7} can be expressed as \eqref{g-dual2}.
If the assumptions~\eqref{eq:dual-assump} are not satisfied,
then in view of Proposition~\ref{sec3:cor1}, it still holds that
\eqref{g-dual} is equivalent to
\begin{equation*}
  \minimize{u,y}\quad \polar\kappa(u)
  \quad\st\quad
  u \in \cl\set{A^*y|\innerp y b - \sigma\polar\rho(y) \ge 1}.
\end{equation*}
This optimal value can in general be less than or equal to that of
\eqref{g-dual2}.

\subsubsection{Second approach}

This approach does not rely on assumptions~\eqref{eq:dual-assump}.
Define the function $\xi(x,r,\tau):=\kappa(x)+\delta_{\epi\rho}(r,\tau)$, which is a gauge because $\epi\rho$ is a cone. Then~\eqref{eq:7} can be equivalently reformulated as
\begin{equation}\label{gprimal:alternativet}
  \minimize{x,r,\tau}\quad \xi(x,r,\tau)
  \quad\st\quad
  Ax+r=b,\ \tau=\sigma.
\end{equation}
Invoke Proposition~\ref{sec2:prop2.5} to obtain
\begin{equation*}
\begin{aligned}
  \xi^\circ(z,y,\alpha)
    &=\max\set{\kappa^\circ(z),\ (\delta_{\epi\rho})^\circ(y,\alpha)}
  \\&\overset{(i)}{=}\max\set{\kappa^\circ(z),\ \delta_{(\epi\rho)^\circ}(y,\alpha)}\\
    &\overset{(ii)}{=}\kappa^\circ(z)+\delta_{(\epi\rho)^\circ}(y,\alpha)
    \overset{(iii)}{=}\kappa^\circ(z)+\delta_{\epi(\rho^\circ)}(y,-\alpha),
\end{aligned}
\end{equation*}
where (i) follows from Proposition~\ref{sec2:prop2}(i), (ii) follows from
the definition of indicator function, and (iii) follows from Proposition~\ref{sec2:prop1}(vi).
As \citet[\S2]{freund:1987} shows for gauge programs with linear constraints, the gauge dual is given by
\begin{equation*}\label{gdual:alternativet}
  \minimize{y,\alpha}\quad \xi^\circ(A^*y,y,\alpha)
  \quad\st\quad
  \langle y,b\rangle+\sigma\alpha\ge1,
\end{equation*}
which can be rewritten as
\begin{equation*}\label{gdual:alternativett}
  \minimize{y,\alpha}\quad \kappa^\circ(A^*y)
  \quad\st\quad
  \langle y,b\rangle+\sigma\alpha\ge1,\ \rho^\circ(y)\leq-\alpha.
\end{equation*}
(The gauge dual for problems with linear constraints also follows
directly from Corollary~\ref{cor:rho-axb-antipolar} with
$\Kscr=\set0$.)  Further simplification leads to the gauge dual
program~\eqref{g-dual2}.

Note that the transformation used to derive~\eqref{gprimal:alternativet} is
very flexible. For example, if~\eqref{eq:7} contained the additional
conic constraint $x\in\Kscr$, then $\xi$ could be defined to contain an
additional term given by the indicator of $\Kscr$.

Even though this approach does not require the assumptions~\eqref{eq:dual-assump} used in \S\ref{sec:gauge-duality}, and thus appears to apply more generally, it is important to keep in mind that we have yet to impose conditions that imply strong duality. In fact, as we show in \S\ref{sec:strong-duality}, the assumptions required there imply
\eqref{eq:dual-assump}.

\subsection{Lagrange duality} \label{sec:lagrange-duality}

Our derivation of the Lagrange dual problem~\eqref{l-dual} is standard, and we include here as a counterpoint to the corresponding gauge dual derivation. We begin by
reformulating
\eqref{eq:7} by introducing an artificial variable $r$, and deriving the dual
of the equivalent problem
\begin{equation}\label{rprimal}
  \minimize{x,r}\quad \kappa(x)
  \quad\st\quad
  Ax+r=b,\ \rho(r)\leq\sigma.
\end{equation}
Define the Lagrangian function
\begin{align*}
  L(x,r, y)
  &=\kappa(x)+\innerp{y}{b-Ax-r}.
\end{align*}

The Lagrange dual problem is given by
\begin{equation*}
  \maximize{y}\quad
  \inf_{x,\,\rho(r)\le \sigma}\ L(x,r, y).
\end{equation*}
Consider the (concave) dual function
\begin{align*}
  \ell( y)
  &=\inf_{x,\,\rho(r)\le \sigma} L(x,r, y)\\
  &=\inf_{x,\,\rho(r)\le \sigma}\Bigl\{\innerp{y}{b} - \innerp{y}{r}
  -\bigl(\innerp{A^* y}{x}-\kappa(x)\bigr)\Bigr\}\\
  &=\innerp{y}{b} - \sup_{\rho(r)\le \sigma} \innerp{y}{r}
  -\sup_{x}\Bigl\{\langle A^* y,x\rangle-\kappa(x)\Bigr\}\\
  &=\innerp{y}{b}-\sigma\rho^\circ(y)
  -\delta_{\kappa^\circ(\cdot)\leq1}(A^* y),
\end{align*}
where the first conjugate on the right-hand side follows from Proposition~\ref{sec2:prop1}(iii) when $\sigma > 0$,
and when $\sigma = 0$, it is a direct consequence of the assumption that $\rho^{-1}(0)=\set0$ so that $\dom\rho^\circ=\Xscr$ from
Proposition~\ref{sec2:prop1}(v);
the last conjugate follows from Proposition~\ref{sec2:prop1}(iv). The Lagrange dual problem is obtained by maximizing $\ell$,
leading to \eqref{l-dual}.

Strictly speaking, the Lagrangian primal-dual pair of problems that we
have derived is given by~\eqref{rprimal} and~\eqref{l-dual}, but it is
easy to see that \eqref{eq:7} is equivalent to \eqref{rprimal}.
in the sense that the respective optimal values are
the same, and that solutions to one problem readily lead to solutions
for the other.
Thus, without loss of generality, we refer to~\eqref{l-dual} as the Lagrange dual to the primal
problem~\eqref{eq:7}.

\section{Strong duality} \label{sec:strong-duality}

Freund's \citeyear{freund:1987} analysis of the gauge dual pair is mainly based on the classical separation theorem. It relies %
on the ray-like property of the constraint set $\Cscr$. Our study of the gauge dual pairs allows us to relax the ray-like assumption. By establishing connections with the Fenchel duality framework, we can develop strong duality conditions that are analogous to those required for Lagrange duality theory.

The Fenchel dual \cite[\S31]{Roc70} of \eqref{g-primal} is given by
\begin{equation}
  \label{sec4:eq1}
  \maximize{y} \quad -\sigma_{\Cscr}(-y) \textt{subject to} \kappa^\circ(y)\le 1,
\end{equation}
where we use $(\delta_\Cscr)^*=\sigma_\Cscr$ and Proposition~\ref{sec2:prop1}(iv) to obtain $\kappa^*=\delta_{[\kappa^\circ \le 1]}$.
Let $v_p$, $v_g$, and $v_f$, respectively, denote the optimal values of \eqref{g-primal}, \eqref{g-dual} and \eqref{sec4:eq1}.
The following result relates their optimal values and dual solutions.

\begin{theorem}[Weak duality]\label{thm:weak-duality}
  Suppose that $\dom\kappa^\circ\cap \Cscr'\neq \emptyset$.
  Then
  \[
  v_p\ge v_{f} = 1/v_g > 0. %
  \]
  Furthermore,
  \begin{enumerate}[{\rm (i)}]
  \item if $y^*$ solves \eqref{sec4:eq1}, then $y^*\in \cone\Cscr'$
  and $y^*/v_f$ solves \eqref{g-dual};
  \item if $y^*$ solves \eqref{g-dual} and $v_g > 0$, then $v_f y^*$ solves \eqref{sec4:eq1}.
  \end{enumerate}
\end{theorem}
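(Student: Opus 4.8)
The plan is to split the statement into three pieces: the inequality $v_p\ge v_f$, the reciprocal equality $v_f=1/v_g$ with positivity, and then parts (i)--(ii). The first piece requires no work, since \eqref{sec4:eq1} is precisely the Fenchel dual of \eqref{g-primal} (set up just above the theorem, using $\kappa^*=\delta_{[\kappa^\circ\le1]}$ and $(\delta_\Cscr)^*=\sigma_\Cscr$), so $v_p\ge v_f$ is immediate from weak Fenchel duality. For the rest, the two facts driving everything are: (a) $-\sigma_\Cscr(-y)=\inf_{x\in\Cscr}\innerp{x}{y}$, so that membership $y\in\Cscr'$ is \emph{exactly} the condition $-\sigma_\Cscr(-y)\ge1$, and on $\Cscr'$ this quantity is finite (because $\Cscr\neq\emptyset$) and lies in $[1,\infty)$; and (b) both $\kappa^\circ$ and $y\mapsto-\sigma_\Cscr(-y)$ are positively homogeneous. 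Thus \eqref{g-dual} minimizes $\kappa^\circ$ over $\{y:-\sigma_\Cscr(-y)\ge1\}$ while \eqref{sec4:eq1} maximizes $-\sigma_\Cscr(-y)$ over $\{y:\kappa^\circ(y)\le1\}$, and I would relate the two by rescaling each feasible point of one program onto the boundary of the other's constraint.

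Concretely, for $v_f\ge1/v_g$ I would fix $y\in\Cscr'$ with $\kappa^\circ(y)<\infty$ --- such a $y$ exists by the hypothesis $\dom\kappa^\circ\cap\Cscr'\neq\emptyset$. If $\kappa^\circ(y)=0$, then every $\mu y$ with $\mu\ge1$ again lies in $\Cscr'$ and is feasible for \eqref{sec4:eq1} with objective $-\sigma_\Cscr(-\mu y)\ge\mu$, so $v_f=+\infty$ and $v_g=0$, which is consistent with $v_f=1/v_g$ in the extended sense. Otherwise $\bar y:=y/\kappa^\circ(y)$ is feasible for \eqref{sec4:eq1} with $-\sigma_\Cscr(-\bar y)\ge1/\kappa^\circ(y)$, and taking the infimum over such $y$ gives $v_f\ge1/v_g$. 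For $v_f\le1/v_g$, given any $y$ with $\kappa^\circ(y)\le1$: either $-\sigma_\Cscr(-y)\le0$, in which case it trivially does not exceed $1/v_g$; or $t:=-\sigma_\Cscr(-y)>0$, and then $y/t\in\Cscr'$ while $\kappa^\circ(y/t)=\kappa^\circ(y)/t\le1/t$, so $v_g\le1/t$, i.e.\ $-\sigma_\Cscr(-y)\le1/v_g$. Combining the two directions, $v_f=1/v_g$; and since $v_g<\infty$ (again by the hypothesis), this common value is positive, giving $v_f=1/v_g>0$.

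Parts (i) and (ii) are the \emph{attained} forms of these two rescalings. If $y^*$ solves \eqref{sec4:eq1}, then $v_f$ is finite and positive, $v_g=1/v_f\in(0,\infty)$, and $-\sigma_\Cscr(-y^*)=v_f>0$; hence $y^*/v_f\in\Cscr'$ (so $y^*=v_f\,(y^*/v_f)\in\cone\Cscr'$) and $\kappa^\circ(y^*/v_f)=\kappa^\circ(y^*)/v_f\le1/v_f=v_g$, which forces equality, so $y^*/v_f$ solves \eqref{g-dual}. Conversely, if $y^*$ solves \eqref{g-dual} and $v_g>0$, then $v_f=1/v_g\in(0,\infty)$, $\kappa^\circ(v_fy^*)=v_fv_g=1$ (so $v_fy^*$ is feasible for \eqref{sec4:eq1}), and $-\sigma_\Cscr(-v_fy^*)=v_f(-\sigma_\Cscr(-y^*))\ge v_f$; feasibility together with the definition of $v_f$ as a supremum then forces $-\sigma_\Cscr(-v_fy^*)=v_f$, so $v_fy^*$ solves \eqref{sec4:eq1}.

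I expect the one genuinely delicate point to be the bookkeeping of degenerate cases: whether $\kappa^\circ$ vanishes somewhere on $\Cscr'$ (forcing $v_g=0$ and $v_f=+\infty$, consistent only under the extended-arithmetic reading of $1/v_g$), and whether the Fenchel objective $-\sigma_\Cscr(-y)$ can be nonpositive (indeed $-\infty$) at feasible points of \eqref{sec4:eq1}. The hypothesis $\dom\kappa^\circ\cap\Cscr'\neq\emptyset$ is exactly what rules out $v_g=+\infty$ (equivalently, guarantees $v_f>0$ and that the displayed relations make sense), while the blanket assumption that $\Cscr$ is nonempty is what keeps $-\sigma_\Cscr(-y)$ finite and $\ge1$ on $\Cscr'$. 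With those two observations in place, every rescaling above is well defined and the argument closes.
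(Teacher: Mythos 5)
Your proof is correct and follows essentially the same route as the paper's: both exploit the positive homogeneity of $\kappa^\circ$ and of $y\mapsto-\sigma_\Cscr(-y)$ to rescale feasible points between \eqref{sec4:eq1} and \eqref{g-dual}, establishing $v_f=1/v_g$ and then reading off (i) and (ii) from the same rescaling applied to optimal points. The only difference is presentational — the paper packages the rescaling as a chain of substitutions inside a single sup/inf identity, whereas you argue the two inequalities separately and are somewhat more explicit about the degenerate cases ($v_g=0$ and nonpositive Fenchel objective values), which the paper handles implicitly.
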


\begin{proof}
  The fact that $v_p\ge v_f$ follows from standard Fenchel duality
  theory. We now show that $v_{f} = 1/v_{g}$.

  Because $\dom\kappa^\circ\cap \Cscr'\neq \emptyset$, there exists $y_0$ such that $\kappa^\circ(y_0)\le 1$
  and $y_0\in \tau\Cscr'$ for some $\tau>0$. In particular, becuse $-\sigma_\Cscr(-y)= \inf_{c\in \Cscr}\innerp{c}{y}$ for all $y$,
  it follows from the definition of $v_f$ that
  \begin{equation}\label{sec4:eq3}
    v_{f} = \sup_y\set{\inf_{c\in \Cscr}\innerp{c}{y}|\kappa^\circ(y)\le 1}\ge \inf_{c\in \Cscr} \innerp{c}{y_0} \ge \tau > 0.
  \end{equation}
  Hence
  \begin{equation}\label{sec4:forcite}
      v_{f} = \sup_{y,\lambda}\set{\lambda | \kappa^\circ(y)\le 1,\ -\sigma_{\Cscr}(-y) \ge \lambda,\
         \lambda > 0}.
  \end{equation}
  From this,
  we have further that
  \[
    \begin{aligned}
      v_{f}
      &= \sup_{y,\lambda}\set{\lambda |  \kappa^\circ(y/\lambda)\le 1/\lambda,\ -\sigma_{\Cscr}(-y/\lambda) \ge 1, 1/\lambda > 0}
    \\&= \sup_{y,\mu}\set{1/\mu | \kappa^\circ(\mu y)\le\mu,\ -\sigma_{\Cscr}(-\mu y) \ge
      1,\ \mu > 0}.
    \end{aligned}
  \]
  Inverting both sides of this equation gives
  \begin{equation}\label{sec4:rel1}
    \begin{aligned}
      1/v_{f}
      &=\inf_{y,\mu}\set{\mu|\kappa^\circ(\mu y)\le\mu,\ -\sigma_{\Cscr}(-\mu y) \ge 1,\ \mu > 0}
    \\&=\inf_{w,\mu}\set{\mu|\kappa^\circ(w)\le\mu,\ -\sigma_{\Cscr}(-w) \ge 1,\ \mu > 0}
    \\&\overset{\rlap{\scriptsize\it(i)}}{=}\inf_{w,\mu}\set{\mu|\kappa^\circ(w)\le\mu,\ w\in \Cscr',\ \mu > 0}
    \\&=\inf_{w,\mu}\set{\mu|\kappa^\circ(w)\le\mu,\ w\in \Cscr'}
    \\&=\inf_{w}\set{\kappa^\circ(w) | w\in \Cscr'} = v_{g},
    \end{aligned}
  \end{equation}
  where equality (i) follows from the definition of $\Cscr'$.  This proves $v_{f} = 1/v_{g}$.

  We now prove item (i). Assume that $y^*$ solves \eqref{sec4:eq1}. Then $v_{f}$ is
  nonzero (by \eqref{sec4:eq3}) and finite, and so is $v_{g} = 1/v_{f}$.  Then $y^*\in\cone \Cscr'$
  because $-\sigma_{\Cscr}(-y^*) = \inf_{c\in \Cscr}\innerp{c}{y^*} = v_{f} > 0$, and we see
  from \eqref{sec4:rel1} that $y^*/v_f$ solves
  \eqref{g-dual}. We now prove item (ii). Note that if $y^*$ solves \eqref{g-dual}
  and $v_{g} > 0$, then $\kappa^\circ(y^*) > 0$. One can then observe
  similarly from \eqref{sec4:rel1} that $y^*/v_g = v_f y^*$ solves
  \eqref{sec4:eq1}. This completes the proof.
\end{proof}

Fenchel duality theory allows us to use Theorem~\ref{thm:weak-duality} to obtain several sufficient
conditions that guarantee strong duality, i.e., $v_pv_{g}=1$, and the attainment of the gauge
dual problem \eqref{g-dual}.
For example, applying \citet[Theorem~31.1]{Roc70} yields the following
corollary.

\begin{corollary}[Strong duality I]\label{sec4:strongduality}
  Suppose that $\dom\kappa^\circ\cap \Cscr'\neq \emptyset$ and $\ri\dom\kappa\cap \ri\Cscr\neq \emptyset$.
  Then $v_pv_{g}=1$ and the gauge dual \eqref{g-dual} attains its optimal value.
\end{corollary}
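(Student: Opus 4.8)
The plan is to reduce everything to the Fenchel-duality identity already assembled in Theorem~\ref{thm:weak-duality} together with one application of the standard Fenchel duality theorem. Since $\dom\kappa^\circ\cap\Cscr'\neq\emptyset$ is assumed, Theorem~\ref{thm:weak-duality} already gives $v_f = 1/v_g > 0$; in particular $v_g$ is finite and positive, so $v_f$ is finite. Thus the corollary will follow the moment we establish strong Fenchel duality $v_p = v_f$ together with attainment of the supremum in \eqref{sec4:eq1}: the equality $v_pv_g = 1$ is then just $v_fv_g = 1$, and a maximizer of \eqref{sec4:eq1} can be pushed through Theorem~\ref{thm:weak-duality}(i) to produce a solution of \eqref{g-dual}.

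Concretely, I would write \eqref{g-primal} as $\inf_x\{\kappa(x) + \delta_\Cscr(x)\}$ and recognize \eqref{sec4:eq1} as its Fenchel dual in the sense of \citet[\S31]{Roc70}: taking $f = \kappa$ and $g = -\delta_\Cscr$, one has $g_*(y) = \inf_x\{\innerp{x}{y} + \delta_\Cscr(x)\} = -\sigma_\Cscr(-y)$ and $f^*(y) = \kappa^*(y) = \delta_{\kappa^\circ(\cdot)\le1}(y)$ by Proposition~\ref{sec2:prop1}(iv), so that the Fenchel dual objective $g_*(y) - f^*(y)$ is exactly $-\sigma_\Cscr(-y)$ restricted to $\kappa^\circ(y)\le1$, with supremum $v_f$. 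Since $\dom g = \Cscr$, the hypothesis $\ri\dom\kappa\cap\ri\Cscr\neq\emptyset$ is precisely the qualification $\ri\dom f\cap\ri\dom g\neq\emptyset$ required by \citet[Theorem~31.1]{Roc70}. That theorem then delivers both $v_p = v_f$ and attainment of the dual supremum, i.e., the existence of $y^*$ with $\kappa^\circ(y^*)\le1$ and $-\sigma_\Cscr(-y^*) = v_f$, so that $y^*$ solves \eqref{sec4:eq1}.

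It then remains only to collect the conclusions: $v_pv_g = v_fv_g = 1$ by Theorem~\ref{thm:weak-duality}, and since $y^*$ solves \eqref{sec4:eq1}, Theorem~\ref{thm:weak-duality}(i) shows that $y^*/v_f \in \Cscr'$ solves \eqref{g-dual}, so \eqref{g-dual} attains its optimal value.

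I do not anticipate a genuine obstacle. The only points needing care are: (a) matching the relative-interior hypothesis of \citet[Theorem~31.1]{Roc70} to the data at hand, which is immediate once one notes $\dom\delta_\Cscr = \Cscr$; and (b) ensuring that Rockafellar's ``$\max$'' genuinely yields an optimal $y^*$ rather than an unattained supremum — this is covered because $v_p \ge 0$ (as $\kappa \ge 0$), $v_p < \infty$ (as $\ri\dom\kappa\cap\ri\Cscr\neq\emptyset$ forces $\Cscr\cap\dom\kappa\neq\emptyset$), and $v_f = v_p$ is therefore finite, so no degenerate $\pm\infty$ branch of Fenchel duality arises.
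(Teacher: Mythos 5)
Your proposal is correct and follows essentially the same route as the paper: the paper's proof likewise invokes \citet[Theorem~31.1]{Roc70} under the hypothesis $\ri\dom\kappa\cap\ri\Cscr\neq\emptyset$ to obtain $v_p=v_f$ with attainment of the Fenchel dual, and then concludes via Theorem~\ref{thm:weak-duality}. Your additional bookkeeping (identifying $g_*(y)=-\sigma_\Cscr(-y)$, $\kappa^*=\delta_{\kappa^\circ(\cdot)\le1}$, and ruling out degenerate $\pm\infty$ cases) is a correct expansion of the steps the paper leaves implicit.
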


\begin{proof}
  From $\ri\dom\kappa\cap \ri\Cscr\neq \emptyset$
  and \citet[Theorem~31.1]{Roc70}, we see that $v_p = v_{f}$ and
  $v_{f}$ is attained. The conclusion of the corollary now follows
  immediately from Theorem~\ref{thm:weak-duality}.
\end{proof}

We would also like to guarantee {\em primal attainment}. Note that the
gauge dual of the gauge dual problem \eqref{g-dual} (i.e., the bidual
of \eqref{g-primal}) is given by
\begin{equation}
  \label{sec4:eq4}
  \minimize{x} \quad \kappa^{\circ\circ}(x) \textt{subject to} x\in\Cscr'',
\end{equation}
which is not the same as \eqref{g-primal} unless $\Cscr$ is ray-like
and $\kappa$ is closed; see Theorem~\ref{thm:bi-antipolar} and Proposition~\ref{sec2:prop1}(ii).  However, we show in the next proposition that
\eqref{sec4:eq4} and \eqref{g-primal} always have the same optimal
value when $\kappa$ is closed (even if $\Cscr$ is not ray-like), and that if the optimal value is
attained in one problem, it is also attained in the other.

\begin{proposition}\label{sec4:prop1}
  Suppose that $\kappa$ is closed. Then the optimal values of
  \eqref{g-primal} and \eqref{sec4:eq4} are the same. Moreover, if the
  optimal value is attained in one problem, it is also attained in the
  other.
\end{proposition}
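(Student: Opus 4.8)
The plan is to reduce both problems to minimizing the \emph{same} objective over two nested feasible sets, and then to exploit positive homogeneity. First I would note that since $\kappa$ is closed, Proposition~\ref{sec2:prop1}(ii) gives $\kappa^{\circ\circ} = \cl\kappa = \kappa$, so the objective of \eqref{sec4:eq4} is just $\kappa$ itself; and Proposition~\ref{sec3:C''} identifies its feasible set as $\Cscr'' = \bigcup_{\lambda \ge 1}\lambda\Cscr \supseteq \Cscr$. Writing $v_p$ and $v''$ for the optimal values of \eqref{g-primal} and \eqref{sec4:eq4} respectively, the inclusion $\Cscr\subseteq\Cscr''$ gives $v'' \le v_p$ at once.

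For the reverse inequality I would take an arbitrary $x\in\Cscr''$, write $x = \lambda c$ with $\lambda\ge1$ and $c\in\Cscr$, and use positive homogeneity and nonnegativity of $\kappa$ to get $\kappa(x) = \lambda\kappa(c)\ge\kappa(c)\ge v_p$; taking the infimum over $x\in\Cscr''$ yields $v''\ge v_p$, hence $v'' = v_p$.

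For attainment, the direction from \eqref{g-primal} to \eqref{sec4:eq4} is immediate: a minimizer $x^*\in\Cscr$ also lies in $\Cscr''$ with $\kappa(x^*) = v_p = v''$. For the converse I would take a minimizer $x^*\in\Cscr''$ of \eqref{sec4:eq4}, write $x^* = \lambda c$ with $\lambda\ge1$ and $c\in\Cscr$, and observe $v_p = v'' = \kappa(x^*) = \lambda\kappa(c)$ while the chain above forces $\lambda\kappa(c)\ge\kappa(c)\ge v_p$; hence $(\lambda-1)\kappa(c) = 0$. If $\lambda = 1$ then $x^* = c\in\Cscr$ already attains \eqref{g-primal}; if instead $\kappa(c) = 0$, then $v_p = \lambda\cdot 0 = 0$ and the point $c\in\Cscr$ with $\kappa(c) = 0 = v_p$ attains \eqref{g-primal}.

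The one step needing care is this last converse-attainment argument, where a minimizer of the bidual may a priori be a strict dilation $\lambda c$ with $\lambda>1$ of a feasible point of \eqref{g-primal}; the resolution is the dichotomy $\lambda = 1$ or $\kappa(c) = 0$, and in the second case $c$ is still handed back as a minimizer of \eqref{g-primal} (necessarily of value zero). Everything else follows directly from $\kappa^{\circ\circ} = \kappa$, the description $\Cscr'' = \bigcup_{\lambda\ge1}\lambda\Cscr$, and the defining properties of a gauge.
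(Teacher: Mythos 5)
Your proposal is correct and follows essentially the same route as the paper: both use $\kappa^{\circ\circ}=\kappa$ and $\Cscr''=\bigcup_{\lambda\ge1}\lambda\Cscr$ to rewrite the bidual as minimizing $\lambda\kappa(c)$ over $c\in\Cscr$, $\lambda\ge1$. The only difference is that you spell out the attainment step (the dichotomy $\lambda=1$ or $\kappa(c)=0$) that the paper dismisses as immediate, which is a fair and correct elaboration.
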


\begin{proof}
  From Proposition~\ref{sec3:C''}, we see that \eqref{sec4:eq4} is equivalent to
  \begin{equation*}\label{sec4:primal2}
    \minimize{\lambda,x} \quad \lambda\kappa(x) \textt{subject to} x\in\Cscr,\ \lambda\ge 1,
  \end{equation*}
  which clearly gives the same optimal value as \eqref{g-primal}. This
  proves the first conclusion.  The second conclusion now also follows
  immediately.
\end{proof}

Hence, we obtain the following corollary, which generalizes
\citet[Theorem~2A]{freund:1987} by dropping the ray-like assumption
on $\Cscr$.

\begin{corollary}[Strong duality II]\label{sec4:corpdzerogap}
  Suppose that $\kappa$ is closed, and that $\ri\dom\kappa\cap
  \ri\Cscr\neq \emptyset$ and $\ri\dom\kappa^\circ\cap \ri\Cscr'\neq
  \emptyset$.  Then $v_pv_{g}=1$ and both values are attained.
  \end{corollary}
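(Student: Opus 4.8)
The plan is to read off $v_pv_g=1$ and dual attainment directly from Corollary~\ref{sec4:strongduality}, and then to obtain primal attainment by running the very same strong-duality argument a second time, now on the dual pair \eqref{g-dual}/\eqref{sec4:eq4}. First I would observe that the hypothesis $\ri\dom\kappa^\circ\cap\ri\Cscr'\neq\emptyset$ in particular forces $\dom\kappa^\circ\cap\Cscr'\neq\emptyset$, so that together with $\ri\dom\kappa\cap\ri\Cscr\neq\emptyset$ the two hypotheses of Corollary~\ref{sec4:strongduality} hold; this immediately yields $v_pv_g=1$ and attainment of the gauge dual \eqref{g-dual}. (Note $v_g\in(0,\infty)$, since $v_pv_g=1$.)

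For the primal attainment I would invoke Proposition~\ref{sec4:prop1}: as $\kappa$ is closed, \eqref{g-primal} attains its value if and only if the bidual \eqref{sec4:eq4} does, so it suffices to prove attainment in \eqref{sec4:eq4}. The key point is that \eqref{sec4:eq4} is precisely the gauge dual of \eqref{g-dual}, and that \eqref{g-dual} is a legitimate instance of \eqref{g-primal}: indeed $\kappa^\circ$ is a closed gauge by Proposition~\ref{sec2:prop1}(i), and $\Cscr'$ is a nonempty (by the separation theorem) closed convex set with $0\notin\Cscr'$ (since $\innerp{0}{x}=0<1$ for $x\in\Cscr$), so the blanket assumptions of \S\ref{sec:background} are met with $(\kappa^\circ,\Cscr')$ in the roles of $(\kappa,\Cscr)$. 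Applying Corollary~\ref{sec4:strongduality} to this pair, its two hypotheses become $\dom\kappa^{\circ\circ}\cap\Cscr''\neq\emptyset$ and $\ri\dom\kappa^\circ\cap\ri\Cscr'\neq\emptyset$. The latter is the standing hypothesis; for the former I would use $\kappa^{\circ\circ}=\cl\kappa\le\kappa$, hence $\dom\kappa\subseteq\dom\kappa^{\circ\circ}$, together with $\Cscr\subseteq\Cscr''$ from Proposition~\ref{sec3:C''}, to get $\dom\kappa^{\circ\circ}\cap\Cscr''\supseteq\dom\kappa\cap\Cscr\supseteq\ri\dom\kappa\cap\ri\Cscr\neq\emptyset$. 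Corollary~\ref{sec4:strongduality} then gives attainment of \eqref{sec4:eq4}, and hence, via Proposition~\ref{sec4:prop1}, attainment of \eqref{g-primal}.

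The only genuine subtlety I anticipate — and the step I would be most careful about — is this ``bootstrapping'': verifying that \eqref{g-dual} and \eqref{sec4:eq4} honestly form a primal/gauge-dual pair satisfying the standing hypotheses of \S\ref{sec:background}, so that Corollary~\ref{sec4:strongduality} (which itself rests on Theorem~\ref{thm:weak-duality} and Rockafellar's Fenchel duality theorem) applies verbatim with $\kappa^\circ,\Cscr'$ in place of $\kappa,\Cscr$. Once that observation is in place, the transposition of the two constraint-qualification conditions is routine, and the proof closes.
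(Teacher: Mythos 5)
Your argument is correct and is essentially the paper's own proof: both apply Corollary~\ref{sec4:strongduality} a second time to the pair \eqref{g-dual}/\eqref{sec4:eq4}, then transfer attainment back to \eqref{g-primal} via Proposition~\ref{sec4:prop1} and the identity $\kappa=\kappa^{\circ\circ}$ for closed gauges. The only (immaterial) difference is how the constraint qualification $\dom\kappa^{\circ\circ}\cap\Cscr''\neq\emptyset$ is verified: you use the inclusions $\dom\kappa\subseteq\dom\kappa^{\circ\circ}$ and $\Cscr\subseteq\Cscr''$, whereas the paper invokes $\ri\Cscr''=\bigcup_{\lambda>1}\lambda\,\ri\Cscr$.
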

  \begin{proof}
    The conclusion follows from Corollary~\ref{sec4:strongduality}, Proposition~\ref{sec4:prop1}, the fact
    that $\kappa = \kappa^{\circ\circ}$ for closed gauge functions,
    and the observation that $\ri\dom\kappa\cap \ri\Cscr\neq \emptyset$
    if and only if $\ri\dom\kappa\cap \ri\Cscr''\neq \emptyset$, since
    $\ri\Cscr'' = \bigcup_{\lambda > 1}\lambda \ri\Cscr$ \citep[p.~50]{Roc70}
    and $\dom\kappa$ is a cone.
  \end{proof}

Before closing this section, we specialize Theorem~\ref{thm:weak-duality} to study the relationship between the Lagrange~\eqref{l-dual} and gauge~\eqref{g-dual2} duals. Let $v_l$ denote the optimal value of \eqref{l-dual}. We use the fact that, for any $y$,
  \begin{equation}\label{sec4:eq}
    -\sigma_\Cscr(-y) = \inf_{c\in \Cscr}\ \innerp{c}{y} \begin{cases}
      >0& {\rm if\ }y\in \cone\Cscr'\backslash\set{0},\\
      \le 0& {\rm otherwise},
    \end{cases}
  \end{equation}
which is directly verifiable using the definition of $\Cscr'$.

\begin{corollary}\label{sec4:cor4.7}
  Suppose that $\Cscr$ is given by~\eqref{sec4:C}, where $0\le\sigma<\rho(b)$, assumption~\eqref{eq:dual-assump} holds,
  and $\dom \kappa^\circ\cap \Cscr'\neq \emptyset$. Then $v_l = v_f > 0$. Moreover,
  \begin{enumerate}[{\rm (i)}]
    \item if $y^*$ solves \eqref{l-dual}, then $y^*/v_l$ solves \eqref{g-dual2};
    \item if $y^*$ solves \eqref{g-dual2}
  and $v_{g}> 0$, then $v_{l}y^*$ solves \eqref{l-dual}.
  \end{enumerate}
\end{corollary}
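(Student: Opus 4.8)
The plan is to reduce Corollary~\ref{sec4:cor4.7} to Theorem~\ref{thm:weak-duality} by identifying the Lagrange dual~\eqref{l-dual} with the Fenchel dual~\eqref{sec4:eq1} (up to the sign/support-function bookkeeping already recorded in~\eqref{sec4:eq}), and identifying the gauge dual~\eqref{g-dual2} with the abstract gauge dual~\eqref{g-dual}. First I would note that, under assumption~\eqref{eq:dual-assump}, the derivation in \S\ref{sec:gauge-duality} shows that~\eqref{g-dual2} is exactly~\eqref{g-dual} for the problem~\eqref{eq:7}, with $\kappa$ replaced by $\kappa\circ A$ and $\Cscr$ given by~\eqref{sec4:C}; consequently $v_g$ as defined before Theorem~\ref{thm:weak-duality} equals the optimal value of~\eqref{g-dual2}. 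Next I would observe that the Fenchel dual~\eqref{sec4:eq1}, whose objective is $-\sigma_\Cscr(-y)=\inf_{c\in\Cscr}\innerp{c}{y}$, can by~\eqref{sec4:eq} be restricted without loss of generality to $y\in\cone\Cscr'$; on that set, writing $y$ in terms of the representation $\Cscr' = \set{A^*w\mid \innerp b w - \sigma\polar\rho(w)\ge1}$ from~\eqref{sec4:rep1}--\eqref{sec4:rep2} and using $\sigma_\Cscr(-y)$ computed from that same representation, the Fenchel problem~\eqref{sec4:eq1} becomes precisely the Lagrange dual~\eqref{l-dual}. Hence $v_l=v_f$.

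With those two identifications in hand, the statement $v_l=v_f>0$ is immediate from Theorem~\ref{thm:weak-duality} (whose hypothesis $\dom\kappa^\circ\cap\Cscr'\ne\emptyset$ is assumed here, reading $\kappa$ as $\kappa\circ A$). For item~(i): if $y^*$ solves~\eqref{l-dual}, then via the identification it solves the Fenchel dual~\eqref{sec4:eq1}, so Theorem~\ref{thm:weak-duality}(i) gives that $y^*/v_f$ solves~\eqref{g-dual}; translating back through the antipolar representation of $\Cscr'$, this says $y^*/v_l$ solves~\eqref{g-dual2}. For item~(ii): if $y^*$ solves~\eqref{g-dual2} and $v_g>0$, then $y^*$ solves~\eqref{g-dual}, so Theorem~\ref{thm:weak-duality}(ii) gives that $v_f y^*$ solves~\eqref{sec4:eq1}, i.e., $v_l y^*$ solves~\eqref{l-dual}.

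The main obstacle I anticipate is the bookkeeping in the translation between a solution $y^*$ of~\eqref{l-dual}/\eqref{g-dual2} phrased in the ``small'' variable and a solution of~\eqref{sec4:eq1}/\eqref{g-dual} phrased in the variable $u=A^*y$ living in $\Cscr'$. One must check that the correspondence $y\mapsto A^*y$ carries optimal solutions to optimal solutions in both directions — in particular that no information is lost in the $A^*$ pre-image, which is where assumption~\eqref{eq:dual-assump} (via Corollary~\ref{cor:rho-antipolar} and Corollary~\ref{cor:rho-axb-antipolar}, which give $\Cscr' = A^*\set{w\mid\innerp b w-\sigma\polar\rho(w)\ge1}$ with the image \emph{closed} and the relevant infima attained) is exactly what is needed. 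A secondary point is handling $\sigma=0$ uniformly with $\sigma>0$; this is already dispatched in \S\ref{sec:gauge-duality} by the remark that $\dom\polar\rho=\Xscr$ when $\rho^{-1}(0)=\set0$, so~\eqref{sec4:rep2} is the $\sigma=0$ instance of~\eqref{sec4:rep1}, and the same unified argument applies. I would also flag explicitly, as in the proof of Theorem~\ref{thm:weak-duality}, that $v_g>0$ forces $\kappa^\circ(A^*y^*)>0$ in item~(ii), which is what legitimizes the scaling by $v_l=1/v_g$.
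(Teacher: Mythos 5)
Your proposal is correct and follows essentially the same route as the paper: identify \eqref{g-dual2} with \eqref{g-dual} via \S\ref{sec:gauge-duality}, identify \eqref{l-dual} with the Fenchel dual \eqref{sec4:eq1} by applying duality to the inner problem $\inf_{c\in\Cscr}\innerp{c}{y}$ for $y=A^*u$ (using \eqref{eq:dual-assump} for strong duality and attainment over the $A^*$-preimage, exactly the point you flag), and then invoke Theorem~\ref{thm:weak-duality}. The only part left schematic is the explicit computation showing $-\sigma_\Cscr(-y)=\sup_{y=A^*u}\{\innerp{b}{u}-\sigma\rho^\circ(u)\}$ together with the complementary bound for $y\notin\cone\Cscr'\setminus\{0\}$, which the paper writes out as \eqref{gref}--\eqref{gle0}, but you have correctly identified both the need for and the source of these facts.
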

\begin{proof}
  From \eqref{sec4:eq}, for any $y\in \cone\Cscr'\backslash\set{0}$,
  we have $-\sigma_\Cscr(-y) = \inf_{c\in \Cscr}\innerp{c}{y} > 0$ and is hence finite.
  Note that $\inf_{c\in \Cscr}\innerp{c}{y} = \inf_{c,r}\set{\innerp{c}{y}| Ac +r = b,\ \rho(r)\le \sigma}$.
  Use this reformulation and proceed as in \S\ref{sec:lagrange-duality} to obtain the dual function
  \begin{align*}
  \ell(u)
  &=\inf_{c,\,\rho(r)\le \sigma}\left\{\innerp{u}{b} - \innerp{u}{r}
  -\bigl(\innerp{A^* u}{c}-\innerp{c}{y}\bigr)\right\}\\
  &=\innerp{b}{u} - \sup_{\rho(r)\le \sigma}\, \innerp{u}{r}
  -\sup_{c}\left\{\langle A^* u - y,c\rangle\right\}\\
  & = \innerp{b}{u} - \sigma\rho^\circ(u) - \delta_{A^*u=y}(u).
  \end{align*}
  The dual problem to $\inf_{c\in \Cscr}\innerp{c}{y}$ is given by maximizing $\ell$ over $u$. Because of assumption~\eqref{eq:dual-assump}
  and the finiteness of $-\sigma_\Cscr(-y)$,
  \begin{equation}\label{gref}
  \inf_{c\in {\cal C}}\ \innerp c y = \sup_{y = A^*u}
  \, \big\{\langle b,u\rangle - \sigma\rho^\circ(u)\big\},
  \end{equation}
  and the supremum is attained, which is a consequence of
  \citet[Corollary~28.2.2 and Theorem~28.4]{Roc70}. On the other hand, for any
  $y\notin \cone\Cscr'\backslash\set{0}$, we have from weak duality
  and~\eqref{sec4:eq} that
  \begin{equation}\label{gle0}
    \sup_{y = A^*u}\left\{\langle b,u\rangle - \sigma\rho^\circ(u)\right\}
    \le \inf_{c\in {\cal C}}\ \langle c,y\rangle \le 0.
  \end{equation}

  Since $\dom \kappa^\circ\cap \Cscr'\neq \emptyset$, we can substitute
  \eqref{gref} into \eqref{sec4:forcite} and obtain
  \begin{equation*}
  \begin{split}
    0<v_{f} &= \sup\set{\lambda |  \kappa^\circ(y)\le 1,\ -\sigma_\Cscr(-y)\ge \lambda > 0} \\
    &=\sup\set{\langle b,u\rangle - \sigma\rho^\circ(u)|  \kappa^\circ(A^*u)\le 1,\ A^*u\in \cone\Cscr'\backslash\set0}\\
    &=\sup\set{\langle b,u\rangle - \sigma\rho^\circ(u)|  \kappa^\circ(A^*u)\le 1}= v_{l},
  \end{split}
  \end{equation*}
  where the last equality follows from \eqref{gref}, \eqref{gle0}, and the
  positivity of $v_f$. This completes the first part of the proof. In
  particular, the Fenchel dual problem \eqref{sec4:eq1} has the same optimal
  value as the Lagrange dual problem \eqref{l-dual}, and $y^* = A^*u^*$ solves
  \eqref{sec4:eq1} if and only if $u^*$ solves
  \eqref{l-dual}. Moreover, since assumption~\eqref{eq:dual-assump} holds, \S\ref{sec:gauge-duality} shows that \eqref{g-dual}
  is equivalent to \eqref{g-dual2}. The conclusion now follows from these and
  Theorem~\ref{thm:weak-duality}.
\end{proof}

We next state a strong duality result concerning the primal-dual gauge pair
\eqref{eq:7} and \eqref{g-dual2}.

\begin{corollary}\label{sec4:cor4.8}
  Suppose that $\Cscr$ and $\Cscr_0$ are given by~\eqref{sec4:C} and~\eqref{sec4:C0}, where $0\le\sigma<\rho(b)$. Suppose also that $\kappa$ is closed,
    \begin{equation}\label{cor5.8CQ}
    \ri\dom\kappa
  \,\cap A^{-1}\ri\Cscr_0\neq \emptyset,\textt{and}\ri\dom\kappa^\circ\cap
  A^*\ri\Cscr_0'\neq \emptyset.
  \end{equation}
  Then the optimal values of \eqref{eq:7} and \eqref{g-dual2} are attained, and their product is $1$.
\end{corollary}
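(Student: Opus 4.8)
The plan is to derive Corollary~\ref{sec4:cor4.8} from the abstract strong-duality statement in Corollary~\ref{sec4:corpdzerogap}, together with the identification of \eqref{g-dual} with \eqref{g-dual2} established in \S\ref{sec:gauge-duality}. The crux is that the constraint qualification~\eqref{cor5.8CQ}, which is phrased in terms of $\Cscr_0$ and $\Cscr_0'$, is precisely the constraint qualification of Corollary~\ref{sec4:corpdzerogap} once it is rewritten in terms of $\Cscr = A^{-1}\Cscr_0$ and its antipolar $\Cscr'$; so the body of the proof amounts to pushing relative interiors through the linear map $A$ and its adjoint.

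First I would record the structural preliminaries: since $0\le\sigma<\rho(b)$ and $\rho$ is a closed gauge, $\Cscr_0$ is a nonempty closed convex set not containing the origin, and the first relation in~\eqref{cor5.8CQ} guarantees $A^{-1}\ri\Cscr_0\neq\emptyset$, whence $A^{-1}\Cscr_0\neq\emptyset$ and $\ri\Cscr_0\cap\range A\neq\emptyset$; together with $\rho(b)>\sigma$ this shows that the blanket assumptions hold for $\Cscr = A^{-1}\Cscr_0$ and that assumption~\eqref{eq:dual-assump} is in force. Next comes the translation of~\eqref{cor5.8CQ}, which I expect to be the only step with real content. Because $A^{-1}\ri\Cscr_0\neq\emptyset$, \citet[Theorem~6.7]{Roc70} gives $\ri\Cscr = \ri(A^{-1}\Cscr_0) = A^{-1}\ri\Cscr_0$, so the first relation in~\eqref{cor5.8CQ} becomes $\ri\dom\kappa\cap\ri\Cscr\neq\emptyset$. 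For the antipolar, Proposition~\ref{sec3:cor2}, applied with $\Cscr_0$ in the role of $\Cscr$ (legitimate since $A^{-1}\Cscr_0\neq\emptyset$ and~\eqref{eq:dual-assump} holds), gives $\Cscr' = (A^{-1}\Cscr_0)' = A^*\Cscr_0'$; and since relative interior commutes with linear images \citep[Theorem~6.6]{Roc70}, $\ri\Cscr' = A^*\ri\Cscr_0'$, so the second relation in~\eqref{cor5.8CQ} becomes $\ri\dom\kappa^\circ\cap\ri\Cscr'\neq\emptyset$. The care needed here is to confirm that $\Cscr_0$ satisfies the hypotheses under which $\Cscr'$ equals $A^*\Cscr_0'$ \emph{without} a closure---which is exactly where~\eqref{eq:dual-assump} enters---and to keep the image and preimage relative-interior identities pointed in the correct directions.

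With $\kappa$ closed, $\ri\dom\kappa\cap\ri\Cscr\neq\emptyset$, and $\ri\dom\kappa^\circ\cap\ri\Cscr'\neq\emptyset$ now in hand, Corollary~\ref{sec4:corpdzerogap} yields $v_pv_g = 1$ with both the value of \eqref{eq:7} (which is \eqref{g-primal} for the present $\Cscr$) and the value of \eqref{g-dual} attained. Finally, since~\eqref{eq:dual-assump} holds, \S\ref{sec:gauge-duality} shows that \eqref{g-dual} is equivalent to \eqref{g-dual2}: the two share the optimal value $v_g$, and, because $\Cscr' = \set{A^*y | \innerp{b}{y} - \sigma\polar\rho(y)\ge 1}$, any minimizer of \eqref{g-dual} lifts to a minimizer of \eqref{g-dual2}. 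Hence \eqref{g-dual2} also attains its optimal value, and the optimal values of \eqref{eq:7} and \eqref{g-dual2} are attained with product $1$, as required.
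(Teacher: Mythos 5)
Your proposal is correct and follows essentially the same route as the paper's own proof: verify that the first condition in~\eqref{cor5.8CQ} implies~\eqref{eq:dual-assump} (hence the equivalence of \eqref{g-dual} and \eqref{g-dual2}), translate the constraint qualification via $\ri\Cscr = A^{-1}\ri\Cscr_0$ and $\ri\Cscr' = A^*\ri\Cscr_0'$ using Rockafellar's Theorems~6.6 and~6.7, and then invoke Corollary~\ref{sec4:corpdzerogap}. Your version merely spells out the intermediate step $\Cscr' = A^*\Cscr_0'$ via Proposition~\ref{sec3:cor2}, which the paper leaves implicit.
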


\begin{proof}
  Since $A^{-1}\ri\Cscr_0\neq \emptyset$, $A$ satisfies the
  assumption in \eqref{eq:dual-assump}. Then \S\ref{sec:gauge-duality} shows that \eqref{g-dual}
  is equivalent to \eqref{g-dual2}.  Moreover, from \citet[Theorem~6.6,
  Theorem~6.7]{Roc70}, we see that $\ri\Cscr = A^{-1}\ri\Cscr_0$ and
  $\ri\Cscr' = A^*\ri\Cscr_0'$. The conclusion now follows from
  Corollary~\ref{sec4:corpdzerogap}.
\end{proof}

This last result also holds if $\Cscr_0$ were polyhedral; in that case, the assumptions \eqref{cor5.8CQ} could be replaced with $\ri\dom\kappa
  \,\cap \Cscr\neq \emptyset$ and $\ri\dom\kappa^\circ\cap
 \Cscr'\neq \emptyset$.

\section{Variational properties of the gauge value function}
\label{sec:sensitivity-analysis}

Thus far, our analysis has focused on the relationship between the
optimal values of the primal-dual pair~\eqref{eq:7}
and~\eqref{g-dual2}. As with Lagrange duality, however, there is also
a fruitful view of dual solutions as providing sensitivity information
on the primal optimal value. Here we provide a corresponding variational
analysis of the gauge optimal-value function with respect to
perturbations in $b$ and $\sigma$.

Sensitivity information is captured in the subdifferential of the
value function
\begin{equation}\label{sec4:value}
  v(h,k) = \inf_x\ f(x,h,k),
\end{equation}
with
\begin{equation}\label{sec4:f}
  f(x,h,k) = \kappa(x) + \delta_{\epi\rho}(b+h-Ax,\sigma+k).
\end{equation}
Following the discussion
in \citet[Section~4]{AravkinBurkeFriedlander:2013}, we start by
computing the conjugate of $f$, which can be done as follows:
\[
  \begin{aligned}
    f^*(z,y,\tau)
    &= \sup_{x,h,k}\set{\langle z,x\rangle+\langle y,h\rangle+\tau k - \kappa(x) - \delta_{\epi\rho}(b+h-Ax,\sigma+k)}
  \\&= \sup_{x,w,\mu}\set{\langle z+A^*y,x\rangle - \kappa(x)+\langle y,w\rangle+\tau\mu - \delta_{\epi\rho}(w,\mu)}-\innerp{b}{y} - \tau \sigma
  \\&= \kappa^*(z+A^*y) + \delta_{\epi\rho}^*(y,\tau)-\innerp{b}{y} - \tau \sigma.
  \end{aligned}
\]
Use Proposition~\ref{sec2:prop1}(iv) and the definition of support
function and convex conjugate to further transform this as
\[
\begin{aligned}
  f^*(z,y,\tau) + \innerp{b}{y} + \tau \sigma
    &= \delta_{\kappa^\circ(\cdot)\le 1}(z+A^*y) + \sigma_{\epi\rho}(y,\tau)\\
    &\overset{(i)}{=} \delta_{\kappa^\circ(\cdot)\le 1}(z+A^*y) + \delta_{(\epi\rho)^\circ}(y,\tau)\\
    &\overset{(ii)}{=} \delta_{\kappa^\circ(\cdot)\le 1}(z+A^*y) + \delta_{\epi(\rho^\circ)}(y,-\tau)\\
    &= \delta_{\kappa^\circ(\cdot)\le 1}(z+A^*y) + \delta_{\rho^\circ(\cdot)\le \cdot}(y,-\tau),
  \end{aligned}
\]
where equality (i) follows from Proposition~\ref{sec2:prop3} and
Proposition~\ref{sec2:prop2}(i), and equality (ii) follows from Proposition~\ref{sec2:prop1}(vi).
Combining this with the definition of the value function
$v(h,k)$,
\begin{equation}\label{sec4:conjv_pre}
  \begin{aligned}
    v^*(y,\tau) &= \sup_{h,k}\set{\innerp{y}{h} + \tau k - v(h,k)}
  \\&= \sup_{x,h,k}\set{\innerp{y}{h} + \tau k - f(x,h,k)}
  \\& = f^*(0,y,\tau)
      = - \innerp{b}{y} - \sigma\tau + \delta_{\kappa^\circ(\cdot)\le 1}(A^*y) + \delta_{\rho^\circ(\cdot)\le \cdot}(y,-\tau).
  \end{aligned}
\end{equation}
In view of \citet[Theorem~11.39]{RoW98}, under a suitable constraint
qualification, the set of subgradients of $v$ is nonempty and is given
by
\begin{equation}\label{sec4:conjv}
\begin{split}
\partial v(0,0) & = \mathop{\rm argmax}_{y,\tau}\set{ - f^*(0,y,\tau)}\\
& = \mathop{\rm argmax}_{y,\tau}\set{\langle b,y\rangle + \sigma \tau| \kappa^\circ(A^*y)\le 1,\ \rho^\circ(y)\le -\tau}\\
& = \Set{\big(y,-\rho^\circ(y)\big)| y \in \mathop{\rm argmax}_{y}\set{\langle b,y\rangle - \sigma \rho^\circ(y)| \kappa^\circ(A^*y)\le 1}},
\end{split}
\end{equation}
in terms of the solution set of \eqref{l-dual} and the corresponding function value of $\rho^\circ(y)$.
We state formally this result, which is a consequence of the above discussion and Corollary~\ref{sec4:cor4.7}.

\begin{proposition}
  For fixed $(b,\sigma)$, define $v$ as in \eqref{sec4:value} and $f$ as in \eqref{sec4:f}. Then
  \[
  \dom f(\cdot,0,0) \neq \emptyset
  \quad \Longleftrightarrow\quad 0
  \in  A\dom\kappa - [\rho(b - \cdot)\le \sigma],
  \]
  and hence
  \[
  (0,0)\in \interior\dom v
  \quad \Longleftrightarrow\quad
  0 \in \interior(A\dom\kappa - [\rho(b - \cdot)< \sigma])
  \]
  If $(0,0)\in \interior\dom v$ and $v(0,0) > 0$, then $\partial v(0,0)\neq \emptyset$ with
  \begin{equation*}
  \begin{split}
  \partial v(0,0)
  &= \Set{(y,-\rho^\circ(y))| y \in \mathop{\rm argmax}_{y}\set{\langle b,y\rangle - \sigma \rho^\circ(y)| \kappa^\circ(A^*y)\le 1}}\\
  &= \Set{v(0,0)\cdot(y,-\rho^\circ(y))| y \in \mathop{\rm argmin}_{y}\set{\kappa^\circ(A^*y)| \langle b,y\rangle - \sigma \rho^\circ(y)\ge 1}}.
  \end{split}
  \end{equation*}
\end{proposition}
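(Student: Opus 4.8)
The plan is to dispatch the three assertions in order: the first by unwinding definitions, the second (the ``hence'') by a relative-interior computation organized around an infimal-convolution function, and the third by combining the conjugate computation already carried out in \eqref{sec4:conjv_pre}--\eqref{sec4:conjv} with Corollary~\ref{sec4:cor4.7}. For the first equivalence, observe that $x\in\dom f(\cdot,0,0)$ if and only if $\kappa(x)<\infty$ and $(b-Ax,\sigma)\in\epi\rho$, i.e.\ $x\in\dom\kappa$ and $\rho(b-Ax)\le\sigma$; such an $x$ exists if and only if $Ax\in[\rho(b-\cdot)\le\sigma]$ for some $x\in\dom\kappa$, which is exactly $0\in A\dom\kappa-[\rho(b-\cdot)\le\sigma]$.

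For the second equivalence I would introduce $\phi(h):=\inf_{x\in\dom\kappa}\rho(b+h-Ax)$, a proper nonnegative convex function with $\dom\phi=A\dom\kappa+\dom\rho-b$. From the definition of $v$, for fixed $h$ the set of admissible $k$ is $[\phi(h)-\sigma,\infty)$ or $(\phi(h)-\sigma,\infty)$, so $\dom v$ is sandwiched between $\{(h,k):k>\phi(h)-\sigma\}$ and $\epi(\phi-\sigma)$; both of these have the same interior, namely $\{(h,k):h\in\interior\dom\phi,\ k>\phi(h)-\sigma\}$ (using continuity of a convex function on the interior of its domain), so $\interior\dom v$ equals it too. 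Hence $(0,0)\in\interior\dom v$ iff $0\in\interior\dom\phi$ and $\phi(0)<\sigma$. On the other side, the substitution $u\mapsto b-w$ gives $A\dom\kappa-[\rho(b-\cdot)<\sigma]=A\dom\kappa+\{w:\rho(w)<\sigma\}-b$, so the right-hand condition reads $b\in\interior(A\dom\kappa+\{w:\rho(w)<\sigma\})$. Since $\{w:\rho(w)<\sigma\}\subseteq\dom\rho$, this last condition clearly implies both $0\in\interior\dom\phi$ and $\phi(0)<\sigma$; for the converse I would exploit that $\phi$, being convex, is bounded above on a ball around $0$, so near-optimal decompositions $b^\sharp=Ax^\sharp+w^\sharp$ with $x^\sharp\in\dom\kappa$ and $\rho(w^\sharp)$ uniformly bounded exist for all $b^\sharp$ near $b$; convex-combining each such decomposition with a fixed witness $\bar x$ satisfying $\rho(b-A\bar x)<\sigma$, with enough (fixed) weight on $\bar x$, lands inside $A\dom\kappa$ and pushes $\rho$ strictly below $\sigma$, exhibiting a full ball around $b$ inside $A\dom\kappa+\{w:\rho(w)<\sigma\}$.

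For the third assertion, the hypothesis $(0,0)\in\interior\dom v$ is precisely the constraint qualification invoked in the discussion preceding the proposition, so \eqref{sec4:conjv} holds and gives the first displayed formula; moreover the absence of a duality gap in \citet[Theorem~11.39]{RoW98} yields $v(0,0)=v_l=v_f$, which is finite because $(0,0)\in\dom v$, and positive by hypothesis. To convert this into the second formula I would appeal to Corollary~\ref{sec4:cor4.7}, verifying its hypotheses under the present assumptions: $\phi(0)<\sigma$ together with $0\in\interior\dom\phi$ yields, via \citet[Theorems~6.6 and~6.7]{Roc70} and a convex-combination argument as in the previous paragraph, a point $x$ with $Ax\in\ri\Cscr_0$, hence assumption~\eqref{eq:dual-assump}; and $v_l>0$ gives $\dom\kappa^\circ\cap\Cscr'\neq\emptyset$, since rescaling any feasible point of \eqref{l-dual} by its (positive) objective value produces a point of $\Cscr'$ lying in $\dom\kappa^\circ$. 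Corollary~\ref{sec4:cor4.7} then shows the solution set of \eqref{g-dual2} is $1/v(0,0)$ times that of \eqref{l-dual}; substituting $y=v(0,0)\tilde y$ in the first formula and invoking the positive homogeneity of $\rho^\circ$ gives the second.

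The step I expect to be the main obstacle is the converse direction of the second equivalence, together with the parallel verification of~\eqref{eq:dual-assump}: propagating the pointwise strict feasibility $\rho(b-A\bar x)<\sigma$ to a whole neighbourhood. This is where the local boundedness of the convex infimal-convolution function $\phi$ near $0$, combined with a fixed, carefully chosen convex-combination weight, carries the argument; everything else is bookkeeping with definitions and previously established results.
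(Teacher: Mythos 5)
Your proposal is correct and follows essentially the same route as the paper: the first subgradient formula comes from the conjugate computation in \eqref{sec4:conjv_pre}--\eqref{sec4:conjv} together with \citet[Theorem~11.39]{RoW98}, and the second from verifying assumption~\eqref{eq:dual-assump}, the identity $v(0,0)=v_l\in(0,\infty)$, and $\dom\kappa^\circ\cap\Cscr'\neq\emptyset$ before invoking Corollary~\ref{sec4:cor4.7}. The only difference is that you supply a detailed verification (via the infimal-projection function $\phi$ and the convex-combination argument) of the domain characterizations that the paper dismisses as routine, and that verification is sound.
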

\begin{proof}
  It is routine to verify the properties of the domain of $f(\cdot,0,0)$ and the interior of the domain of $v$.
  Suppose that $(0,0)\in \interior\dom v$. Then the value function is continuous at $(0,0)$
  and hence $\partial v(0,0)\neq \emptyset$. The first expression of $\partial v(0,0)$
  follows directly from \citet[Theorem~11.39]{RoW98} and the discussions preceding this proposition.

  We next derive the second expression of $\partial v(0,0)$.
  Since $(0,0)\in \interior\dom v$ implies $0\in \interior(A\dom\kappa - [\rho(b - \cdot)< \sigma])$,
  the linear map $A$ satisfies assumption~\ref{eq:dual-assump}. Moreover, as
  another consequence of \citet[Theorem~11.39]{RoW98},
  $(0,0)\in \interior\dom v$ also implies that $v(0,0) = \sup_{y,\tau}\{-f^*(0,y,\tau)\}$, which is just
  the optimal value of the Lagrange dual problem \eqref{l-dual}. Furthermore, $v(0,0)$ being finite and nonzero together
  with the definition of \eqref{l-dual} and \eqref{sec4:rep1} implies that $\dom\kappa^\circ\cap \Cscr'\neq \emptyset$.
  The second expression of $\partial v(0,0)$ now follows from these three observations and Corollary~\ref{sec4:cor4.7}.
\end{proof}

\section{Extensions} \label{sec:extensions}

The following examples illustrate how to extend the canonical
formulation~\eqref{eq:7} to accommodate related problems. It also
provides an illustration of the techniques that can be used to pose
problems in gauge form and how to derive their corresponding gauge
duals.

\subsection{Composition and conic side constraints}
\label{sec:linear-map}

A useful generalization of~\eqref{eq:7} is to allow the gauge
objective to be composed with a linear map, and for the addition of
conic side constraints. The composite objective can be used to
capture, for example, problems such as weighted basis pursuit (e.g.,
\cite{CandesWakinBoyd:2008,FMSY:2012}), or together with the conic
constraint, problems such as nonnegative total variation
\citep{KLY:2007}.

The following result generalizes the canonical primal-dual gauge pair
\eqref{eq:7} and~\eqref{g-dual2}.

\begin{proposition}
 Let $D$ be a linear map and $\Kscr$ be a convex cone. The following
 pair of problems constitute a primal-dual gauge pair:
\begin{subequations}
  \begin{alignat}{4}\label{eq:12}
  &\minimize{x}\quad & &\kappa(Dx) &\quad &\st
  &\quad
  &\rho(b-Ax)\leq\sigma,\ x\in\Kscr,
\\
  \label{eq:12-dual}
  &\minimize{y,\,z} & &\kappa^\circ(z)
  & &\st
  & &\langle y,b\rangle-\sigma\rho^\circ(y)\geq1,\ D^*z-A^*y\in\Kscr^*.
\end{alignat}
\end{subequations}
\end{proposition}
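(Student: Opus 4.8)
The plan is to follow the ``second approach'' of \S\ref{sec:gauge-duality}. First I would lift \eqref{eq:12} to a gauge program with \emph{linear equality} constraints. Since the proposition assumes no constraint qualification, the key device is to introduce \emph{two} auxiliary variables, $r=Dx$ and $s=b-Ax$, so that the lifted objective is a sum of gauges in \emph{decoupled} variable blocks; its polar can then be read off from Proposition~\ref{sec2:prop2.5} alone, with no appeal to Proposition~\ref{sec2:prop1.5} or to a sum/composition calculus (either of which would demand a qualification). Concretely, \eqref{eq:12} is equivalent to
\[
  \minimize{x,r,s,\tau}\ \ \kappa(r)+\delta_{\Kscr}(x)+\delta_{\epi\rho}(s,\tau)
  \quad\st\quad
  Dx-r=0,\ \ Ax+s=b,\ \ \tau=\sigma,
\]
whose objective, call it $\Xi$, is a gauge because $\Kscr$ and $\epi\rho$ are cones.

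Next I would compute $\Xi^\circ$. Applying Proposition~\ref{sec2:prop2.5} to the decoupled blocks $r$, $x$, $(s,\tau)$ gives $\Xi^\circ(z_x,z_r,z_s,z_\tau)=\max\set{\kappa^\circ(z_r),\,\delta_{\Kscr}^\circ(z_x),\,\delta_{\epi\rho}^\circ(z_s,z_\tau)}$. Here $\delta_{\Kscr}^\circ=\delta_{\Kscr^\circ}=\delta_{-\Kscr^*}$ by Proposition~\ref{sec2:prop2}(i) together with the identity $\Kscr^\circ=-\Kscr^*$ (cf.\ \S\ref{sec:polar}); and, exactly as in the $\xi^\circ$ computation of \S\ref{sec:gauge-duality}, $\delta_{\epi\rho}^\circ(z_s,z_\tau)=\delta_{(\epi\rho)^\circ}(z_s,z_\tau)=\delta_{\epi(\rho^\circ)}(z_s,-z_\tau)$ via Propositions~\ref{sec2:prop2}(i) and \ref{sec2:prop1}(vi). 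Hence $\Xi^\circ$ is finite precisely when $z_x\in-\Kscr^*$ and $\rho^\circ(z_s)\le-z_\tau$, in which case it equals $\kappa^\circ(z_r)$.

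The lifted constraint set has the form $\set{w\mid \bar Aw=\bar b}$ with $\bar A(x,r,s,\tau)=(Dx-r,\,Ax+s,\,\tau)$ and $\bar b=(0,b,\sigma)$; the standing assumption that $0$ is not feasible forces $\rho(b)>\sigma\ge0$, hence $\bar b\neq0$. I would then apply Corollary~\ref{cor:rho-axb-antipolar} with the cone $\set0$ to obtain the antipolar of this set as $\set{\bar A^*(\zeta,y,\alpha)\mid \innerp{b}{y}+\sigma\alpha\ge1}$, where $\bar A^*(\zeta,y,\alpha)=(D^*\zeta+A^*y,\,-\zeta,\,y,\,\alpha)$. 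By the explicit form of the gauge dual \eqref{g-dual} for gauge programs with linear equality constraints (\citet[\S2]{freund:1987}, or Corollary~\ref{cor:rho-axb-antipolar} itself), the gauge dual of the lifted problem is the minimization of $\Xi^\circ(D^*\zeta+A^*y,\,-\zeta,\,y,\,\alpha)$ over $(\zeta,y,\alpha)$ subject to $\innerp{b}{y}+\sigma\alpha\ge1$. Substituting the formula for $\Xi^\circ$ and setting $z=-\zeta$, this becomes the minimization of $\kappa^\circ(z)$ over $(y,z,\alpha)$ subject to $D^*z-A^*y\in\Kscr^*$, $\rho^\circ(y)\le-\alpha$, and $\innerp{b}{y}+\sigma\alpha\ge1$.

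Finally I would eliminate the scalar $\alpha$, which does not enter the objective. For $\sigma>0$, the choice $\alpha=-\rho^\circ(y)$ is feasible if and only if $\innerp{b}{y}-\sigma\rho^\circ(y)\ge1$; for $\sigma=0$, the standing assumption $\rho^{-1}(0)=\set0$ and Proposition~\ref{sec2:prop1}(v) give $\dom\rho^\circ=\Xscr$, so $\rho^\circ(y)\le-\alpha$ can always be satisfied and the remaining constraint is $\innerp{b}{y}\ge1$; in either case the surviving constraint is $\innerp{b}{y}-\sigma\rho^\circ(y)\ge1$, which gives exactly \eqref{eq:12-dual}. Since \eqref{eq:12-dual} is visibly a gauge program (the objective $(y,z)\mapsto\kappa^\circ(z)$ is a gauge, and its feasible set is closed convex and avoids the origin), the two problems do form a primal-dual gauge pair. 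The argument is essentially bookkeeping; the only things needing care are the use of two auxiliary variables (so that polarizing the objective requires no constraint qualification) and the $\sigma=0$ case in the elimination of $\alpha$.
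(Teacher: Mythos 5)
Your proposal is correct and follows essentially the same route as the paper's own proof: lift \eqref{eq:12} to a linearly constrained gauge program with the objective $\delta_{\Kscr}(x)+\kappa(\cdot)+\delta_{\epi\rho}(\cdot,\cdot)$ on decoupled blocks, polarize it via Propositions~\ref{sec2:prop2.5}, \ref{sec2:prop2}(i) and \ref{sec2:prop1}(vi), take the antipolar of the affine constraint set via Corollary~\ref{cor:rho-axb-antipolar} with $\Kscr=\set0$, and eliminate $\alpha$. The only differences are cosmetic (variable naming and the sign convention $z=-\zeta$), so there is nothing further to add.
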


\begin{proof}
  Reformulate~\eqref{eq:12} as a gauge optimization problem by
  introducing additional variables, and lifting both the cone $\Kscr$
  and the epigraph $\epi\rho$ into the objective by means of their
  indicator functions: use the function
  $f(x,s,r,\tau):=\delta_\Kscr(x)+\kappa(s)+\delta_{\epi\rho}(r,\tau)$
  to define the equivalent gauge optimization problem
\begin{equation*}
  \minimize{x,s,r,\tau} \quad f(x,s,r,\tau)
  \quad\st\quad
  Dx=s,\ Ax+r=b,\ \tau=\sigma.
\end{equation*}
As with \S\ref{sec:gauge-duality}, observe that $f$ is a sum of gauges
on disjoint variables. Thus, we invoke Proposition~\ref{sec2:prop2.5}
to deduce the polar of the above objective:
\begin{equation*}
\begin{aligned}
    f^\circ(u,z,y,\alpha)
    &=\max\set{\delta^\circ_{\Kscr}(u),\ \kappa^\circ(z),\ \delta^\circ_{\epi\rho}(y,\alpha)}\\
    &\overset{(i)}{=}\max\set{\delta_{\Kscr^\circ}(u),\ \kappa^\circ(z),\ \delta_{(\epi\rho)^\circ}(y,\alpha)}\\
    &\overset{(ii)}{=}\max\set{\delta_{\Kscr^*}(-u),\ \kappa^\circ(z),\ \delta_{\epi(\rho^\circ)}(y,-\alpha)}\\
    &\overset{(iii)}{=}\delta_{\Kscr^*}(-u)+\kappa^\circ(z)+\delta_{\epi(\rho^\circ)}(y,-\alpha),
  \end{aligned}
\end{equation*}
where (i) follows from Proposition~\ref{sec2:prop2}(i), (ii) follows
from Proposition~\ref{sec2:prop1}(vi), and (iii) follows from the
definition of indicator function. Moreover, use
Corollary~\ref{cor:rho-axb-antipolar} to derive the antipolar of the
linear constraint set $\Cscr=\set{(x,s,r,\tau) | Dx=s,\, Ax+r=b,\,
  \tau=\sigma}$:
\[
\Cscr' = \set{(-D^*z + A^*y,z,y,\alpha)|\langle b,y\rangle+\sigma\alpha\geq1}.
\]
From the above discussion, we obtain the following gauge
program
\begin{equation*}
  \minimize{y,z,\alpha}
  \quad
  \delta_{\Kscr^*}(D^*z-A^*y)
  +\kappa^\circ(z)+\delta_{\epi(\rho^\circ)}(y,-\alpha)
  \quad\st\quad
  \langle b,y\rangle+\sigma\alpha\geq1.
\end{equation*}
Bringing the indicator functions down to the constraints leads to
\begin{equation*}
  \minimize{y,z,\alpha} \quad \kappa^\circ(z)
  \quad\st\quad
  \langle y,b\rangle+\sigma\alpha\geq1,\
  \rho^\circ(y)\leq-\alpha,\
  D^*z-A^*y\in\Kscr^*;
\end{equation*}
further simplification by eliminating $\alpha$ yields
the gauge dual problem~\eqref{eq:12-dual}.
\end{proof}

\subsection{Nonnegative conic optimization}

Conic optimization subsumes a large class of convex optimization
problems that ranges from linear, to second-order, to semidefinite
programming, among others. Example~\ref{example:conic-gauge} describes
how a general conic optimization problem can be reformulated as an
equivalent gauge problem; see~\eqref{eq:8}.

We can easily accommodate a generalization of~\eqref{eq:8} by embedding
it within the formulation defined by~\eqref{eq:2}, and define
\begin{equation}
  \label{eq:6}
  \minimize{x} \quad \innerp{c}{x} + \indicator\Kscr(x) \quad\st\quad \rho(b-Ax)\le\sigma,
\end{equation}
with $c\in\Kscr^{*}$, as the conic gauge optimization problem.  The
following result describes its gauge dual.

\begin{proposition} \label{thm:gauge-plus-cone}
  Suppose that
   $\Kscr\subset\Xscr$ is a convex cone and $c\in\Kscr^*$. Then the
   gauge
   \[
   \kappa(x)=\innerp{c}{x}+\indicator{\Kscr}(x)
   \]
   has the polar
   \begin{equation}
     \label{eq:kappa-polar-cone}
     \polar\kappa(u) = \inf\set{\alpha\ge0 | \alpha c\in\Kscr^{*}+u},
   \end{equation}
   with $\dom\kappa^\circ= \vspan\{c\} - \Kscr^*$.
   If $\Kscr$ is closed and $c\in \interior \Kscr^*$, then
   $\kappa$ has compact level sets, and $\dom\kappa^\circ=\Xscr$.
\end{proposition}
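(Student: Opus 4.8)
The statement has three parts: (a) the formula for $\polar\kappa$, (b) the identification of $\dom\kappa^\circ$, and (c) the claims under the stronger hypothesis $c\in\interior\Kscr^*$. I would treat $\kappa=\innerp c\cdot+\delta_\Kscr$ as a sum of two gauges on the \emph{same} variable, so the polar is a natural infimal convolution. The cleanest route is to write $\kappa=\gamma_1+\gamma_2$ with $\gamma_1(x)=\innerp cx$ and $\gamma_2=\delta_\Kscr$, note that $\gamma_1$ has polar $\gamma_1^\circ=\delta_{\{u\,:\,\innerp ux\le\innerp cx\ \forall x\}}$; since $\gamma_1$ is linear this level set is the singleton-shifted cone, in fact $\gamma_1^\circ=\delta_{c-\Xscr}$ is wrong — rather one computes directly from \eqref{eq:polardef} that $\gamma_1^\circ(u)=\inf\set{\mu>0\mid\innerp xu\le\mu\innerp cx\ \forall x}$, which is $0$ if $u\preceq c$ in the sense $c-u/\mu$ annihilates everything, i.e. $\gamma_1^\circ(u)=\delta_{\{c\}}$ after rescaling... the careful statement is $\gamma_1^\circ(u)=\delta_{\R_+ c}(u)\cdot(\text{the scalar } t \text{ with } u=tc)$; it is cleaner to instead use that $\gamma_2^\circ=\delta_\Kscr^\circ=\delta_{\Kscr^\circ}=\delta_{-\Kscr^*}$ by Proposition~\ref{sec2:prop2}(i) and the cone identity $\polar{(\Kscr)}=-\Kscr^*$, and apply the infimal-convolution formula for polars of sums on a common variable (which follows from Proposition~\ref{sec2:prop1}(iv): $\kappa^*=\gamma_1^*\mathbin\square\gamma_2^*$ on the conjugate side, or more directly Proposition~\ref{sec2:prop1.5}/its Corollary with the diagonal map).

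\textbf{Step 1 (the polar formula).} Using Proposition~\ref{sec2:prop1}(iii), $\kappa^\circ(u)=\sup\set{\innerp ux\mid \innerp cx+\delta_\Kscr(x)\le 1}=\sup\set{\innerp ux\mid x\in\Kscr,\ \innerp cx\le 1}$. I would parametrize $x=\lambda w$ with $w\in\Kscr$, $\innerp cw\le 1$, $\lambda\ge 0$, and rewrite this as a conic linear program; its value is finite iff $\innerp uw\le \alpha\innerp cw$ for some $\alpha\ge 0$ and all $w\in\Kscr$, i.e. iff $\alpha c-u\in\Kscr^*$ for some $\alpha\ge 0$, and then the supremum equals the least such $\alpha$. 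That is exactly \eqref{eq:kappa-polar-cone}. Equivalently, one can obtain this from Proposition~\ref{sec2:prop1}(iv) by computing $\kappa^*(u)=\delta_{\{x\,:\,\kappa^\circ(x)\le 1\}}(u)$: since $\kappa^*(u)=\sup_x\set{\innerp ux-\innerp cx-\delta_\Kscr(x)}=\sup_{x\in\Kscr}\innerp{u-c}x=\delta_{-\Kscr^*}(u-c)=\delta_{c-\Kscr^*}(u)$, the sublevel set $\set{u\mid\kappa^\circ(u)\le 1}=c-\Kscr^*$, and since $\kappa^\circ$ is a gauge it is the Minkowski function of this set, which gives $\kappa^\circ(u)=\inf\set{\alpha\ge 0\mid u\in\alpha(c-\Kscr^*)}=\inf\set{\alpha\ge 0\mid \alpha c-u\in\alpha\Kscr^*}$; using that $\Kscr^*$ is a cone this is $\inf\set{\alpha\ge 0\mid \alpha c-u\in\Kscr^*}$. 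I prefer this conjugate-based derivation since it is short and directly yields $\dom\kappa^\circ$.

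\textbf{Step 2 ($\dom\kappa^\circ$).} From the Minkowski-function description, $u\in\dom\kappa^\circ$ iff $\alpha c-u\in\Kscr^*$ for some $\alpha\ge 0$. Splitting $\alpha$ into sign: any real multiple $tc$ with $t\in\R$ can be reached by absorbing a large positive $\alpha$ and a compensating element of $\Kscr^*$ only when $tc$ itself... more carefully, $\set{u\mid \exists\alpha\ge 0:\ \alpha c-u\in\Kscr^*}=\bigcup_{\alpha\ge 0}(\alpha c-\Kscr^*)=\cone(c-\Kscr^*)=\R_+c-\Kscr^*$. This already shows $\dom\kappa^\circ\supseteq -\Kscr^*$ and $\supseteq\R_+c$; to get the full span of $c$ one uses $c\in\Kscr^*$ (the standing hypothesis), so $-c\in-\Kscr^*\subseteq\dom\kappa^\circ$, hence $\vspan\{c\}\subseteq\dom\kappa^\circ$, and since the set is convex and a cone, $\dom\kappa^\circ=\R_+c-\Kscr^*=\vspan\{c\}-\Kscr^*$ as claimed. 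This is the step where the hypothesis $c\in\Kscr^*$ is genuinely used, and I expect it to be the only slightly delicate bookkeeping in the proof.

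\textbf{Step 3 (the interior case).} If $\Kscr$ is closed and $c\in\interior\Kscr^*$, then $-\Kscr^*$ has $-c$ in its interior, so $\vspan\{c\}-\Kscr^*\supseteq\{-c\}+\interior(-\Kscr^*) \ni 0$ with $0$ interior; being a convex cone containing a neighborhood of $0$ it must be all of $\Xscr$, giving $\dom\kappa^\circ=\Xscr$. For compact level sets of $\kappa$: by Proposition~\ref{sec2:prop1}(ii)(v) and item~(ii) of Proposition~\ref{sec2:prop1}, $\dom\kappa^\circ=\Xscr$ together with closedness of $\kappa$ forces the level sets $\set{x\mid\kappa(x)\le 1}$ to be bounded — indeed a closed gauge with unbounded unit level set has a nonzero recession direction $d$ with $\kappa(d)=0$, on which every $u\in\dom\kappa^\circ$ satisfies $\innerp ud\le 0$ by \eqref{eq:4}, contradicting $\dom\kappa^\circ=\Xscr$; closedness then gives compactness. (Alternatively, directly: $c\in\interior\Kscr^*$ means $\innerp cx\ge\epsilon\|x\|$ for $x\in\Kscr$ and some $\epsilon>0$, so $\set{x\in\Kscr\mid\innerp cx\le 1}$ is bounded, and closed since $\Kscr$ is closed.) I would present this last direct argument as the main line and relegate the recession-cone argument to a remark.

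\textbf{Main obstacle.} The only real subtlety is keeping the scaling of $\alpha$ straight when passing between $\set{u\mid\kappa^\circ(u)\le 1}=c-\Kscr^*$ and the Minkowski-function formula — one must use that $\Kscr^*$ is a cone to simplify $\alpha c-u\in\alpha\Kscr^*$ to $\alpha c-u\in\Kscr^*$, and one must invoke $c\in\Kscr^*$ exactly once to upgrade $\R_+c$ to $\vspan\{c\}$ in the domain. Everything else is routine application of Propositions~\ref{sec2:prop1}, \ref{sec2:prop2}, and the polar-of-a-cone identity.
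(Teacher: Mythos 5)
Your proof is correct, but your main line for the polar formula is genuinely different from the paper's. The paper starts from Proposition~\ref{sec2:prop1}(iii), writes $\kappa^\circ(u)=\sup\{\innerp{u}{x}\mid \innerp{c}{x}\le 1,\ x\in\Kscr\}$, and passes to $\inf\{\alpha\ge 0\mid \alpha c-u\in\Kscr^*\}$ by Lagrangian strong duality, invoking Slater's condition together with \citet[Corollary~28.2.2 and Theorem~28.4]{Roc70} (and weak duality when $u\notin\dom\kappa^\circ$). Your preferred route avoids duality entirely: compute $\kappa^*=\delta_{c-\Kscr^*}$ directly, use Proposition~\ref{sec2:prop1}(iv) to identify $\{u\mid\kappa^\circ(u)\le 1\}=c-\Kscr^*$, and recover $\kappa^\circ$ as the Minkowski function of that set. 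This is shorter, needs no constraint qualification, and (as you note) hands you $\dom\kappa^\circ$ for free; the price is the small bookkeeping at $\alpha=0$ when simplifying $\alpha c-u\in\alpha\Kscr^*$ to $\alpha c-u\in\Kscr^*$, which you handle correctly by using that $c\in\Kscr^*$ and $\Kscr^*$ is a convex cone. Your Step~2 is a more explicit version of what the paper merely asserts, and your direct boundedness argument in Step~3 is essentially the paper's (the paper extracts $\gamma>0$ by minimizing $\innerp{c}{\cdot}$ over the compact set $\{x\in\Kscr\mid\|x\|=1\}$, and then deduces $\dom\kappa^\circ=\Xscr$ simply because a sup over a compact set is finite, rather than via the cone-with-interior argument).

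Two small blemishes, neither fatal. First, your alternative sup-based derivation in Step~1 asserts that ``the supremum equals the least such $\alpha$'' — that is exactly the strong-duality step the paper has to justify via Slater, so if you kept that route you would owe a citation; since you designate the conjugate route as the main line, this does not matter. Second, in Step~3 the set $\{-c\}+\interior(-\Kscr^*)$ contains $0$ only if $-c\in\interior\Kscr^*$, which is not assumed; the correct open neighborhood of the origin inside $\vspan\{c\}-\Kscr^*$ is $c-\interior\Kscr^*$. With that sign fixed, your argument that a convex cone containing a neighborhood of the origin equals $\Xscr$ goes through.
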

\begin{proof}
  From Proposition~\ref{sec2:prop1}, we have that
  \begin{align}
    \kappa^\circ(u)
      &=\sup\left\{\innerp{u}{x}\,\left|\,\kappa(x)\leq1\right.\right\}\notag\\
      &=\sup\left\{\innerp{u}{x}\,\left|\,\innerp{c}{x}\leq1\text{ and }x\in\Kscr\right.\right\}\label{eq:primal-prob11}\\
      &=\inf\left\{\alpha\geq0\,\left|\,\alpha c-u\in\Kscr^*\right.\right\},\notag
  \end{align}
  where the strong (Lagrangian) duality relationship in the last
  equality stems from the following argument. First consider the case
  where $u\in\dom\kappa^\circ$. Because the maximization problem
  in~\eqref{eq:primal-prob11} satisfies Slater's condition, equality
  follows from \citet[Corollary~28.2.2 and Theorem~28.4]{Roc70}.
  Next, consider the case where $u\notin\dom\kappa^\circ$, where
  $\kappa^{\circ}(u)=\infty$. The last equality then follows from weak
  duality. For the domain, note that the minimization problem is feasible
  if and only if $u\in\vspan\{c\}-\Kscr^{*}$; hence
  $\dom\kappa^\circ=\vspan\{c\}-\Kscr^*$.

  To prove compactness of the level sets of $\kappa$ when $\Kscr$ is
  closed and $c\in\interior\Kscr^*,$ define
  $\gamma:=\inf_x\set{\innerp{c}{x} | \norm{x}=1,\ x\in\Kscr}$ and
  observe that compactness of the feasible set in this minimization
  implies that the infimum is attained and that $\gamma>0.$ Thus, for
  any $x\in\Kscr\setminus\{0\}$, $\innerp c x\ge\gamma\norm{x}>0$ and,
  consequently, that
  $\set{x\in\Xscr|\kappa(x)\le\alpha}=\set{x\in\Kscr|\innerp c
    x\le\alpha}\subset\set{x\in\Xscr|\norm{x}\le\alpha/\gamma}$. This
  guarantees that the level sets of $\kappa$ are bounded, which
  establishes their compactness. From this and
  Proposition~\ref{sec2:prop1}(iii), we see that $\kappa^\circ(u)$ is
  finite for any $u\in \Xscr$.
\end{proof}

\begin{remark}\label{remark:polarsdp}
  Note that even though the polar gauge in~\eqref{eq:kappa-polar-cone}
  is closed, it is not necessarily the case that it has a closed
  domain. For example, let $\Kscr$ be the cone of PSD $2$-by-$2$
  matrices, and define
  \[
    c=\begin{pmatrix}1&0\\0&0\end{pmatrix}
    \textt{and}
    u_n=\begin{pmatrix}0&\phantom{-}1\\1&-\frac{1}{n}\end{pmatrix},
  \]
  for each $n=1,2,\ldots$. Use the
  expression~\eqref{eq:kappa-polar-cone} to obtain that
  $\kappa^\circ(u_n)=n$. Hence, $u_{n}\in\dom\kappa^{\circ}$, but
  $\lim_{n\to\infty}u_{n}\not\in\dom\kappa^{\circ}$.

  This is an example of a more general result described by
  \citet[Lemma 2.2]{ramana:1997}, which shows that the cone of PSD
  matrices is \emph{devious} (i.e., for every nontrivial proper face
  $F$ of $\Kscr$, $\vspan F+\Kscr$ is not closed). The concept of a
  devious cone seems to be intimately related to the closedness of the
  domain of polar gauges such as~\eqref{eq:kappa-polar-cone} because
  $\vspan\{c\}-\Kscr^*=-(\vspan F+\Kscr^*),$ where $F\subseteq\Kscr^*$
  is the smallest face of $\Kscr^*$ that contains $c$; see
  \cite[Proposition~3.2]{TunWolk:2012}.

  With that in mind, it is interesting to derive a representation for
  the closure of the domain of~\eqref{eq:kappa-polar-cone}.  It
  follows from \citet[Corollary 16.4.2]{Roc70} that
  $\cl\dom\kappa^\circ =\cl\left(\vspan\{c\}-\Kscr^*\right)
  =\big(\{c\}^\perp\cap\cl\Kscr\big)^\circ$.
\end{remark}

\subsubsection{Semidefinite conic gauge optimization}
\label{sec:SDP-gauge}

Here we give a concrete example of how to derive the gauge dual of a
semidefinite conic gauge optimization problem. Consider the feasible
semidefinite program
\begin{equation}\label{eq:non-neg-sdp}
\minimize{X}\quad\innerp{C}{X}\quad\st\quad \Ascr X=b,\ X\succeq0,
\end{equation}
where $C\succeq 0$, and $\Ascr:\Sscr^{n}\to\R^{m}$ is a linear map
from symmetric $n$-by-$n$ matrices to $m$-vectors. Define the gauge
objective $ \kappa(X) = \innerp{C}{X} + \delta_{\cdot\succeq0}(X)$,
set $\sigma=0$, and let $\rho=\norm{\cdot}$, i.e., the constraint set
is $\Cscr=\set{X|\Ascr X=b}$. Proposition~\ref{thm:gauge-plus-cone},
with $\Kscr$ equal to the (self-dual) PSD cone, gives the
gauge polar
\[
\begin{aligned}
  \polar\kappa(U)
  = \inf\set{\alpha\ge0|\alpha C\succeq U}.%
\end{aligned}
\]
The gauge dual~\eqref{g-dual2} then specializes to
\[
\minimize{\alpha,y}\quad\alpha
\quad\st\quad
\alpha\geq0,\ \innerp b y \ge 1,\ \alpha C\succeq\Ascr^*y,
\]
which is valid for all $C\succeq 0$.
This dual can be simplified by noting that
\[
\polar\kappa(U) = \max\{0,\ \lambda\submax\left(U,C\right)\},
\]
where $\lambda\submax\left(U,C\right)$ is the largest generalized
eigenvalue corresponding to the eigenvalue problem $Ux=\lambda
Cx$ (which might be $+\infty$ as in the example given in Remark~\ref{remark:polarsdp}).
Together with Corollary~\ref{cor:rho-axb-antipolar}, which gives
the antipolar of $\Cscr$,  and Theorem~\ref{thm:weak-duality}, which asserts
that the optimal dual value is positive, the gauge dual problem can then be written as
\[
\minimize{y}\quad\lambda\submax(\Ascr^{*}y,\ C)
\quad\st\quad
\innerp b y \ge 1.
\]
The lifted formulation of phase retrieval \citep{candes2012phaselift}
is an example of the
conic gauge optimization problem \eqref{eq:non-neg-sdp} with $C=I$. In
that case, \eqref{eq:non-neg-sdp} is the problem of minimizing the
trace of a PSD matrix that satisfies a set of
linear constraints.
The gauge
dual problem above is simply minimizing the maximum eigenvalue of
$\Ascr^{*}y$ over a single linear constraint.

\section{Discussion} \label{sec:discussion}

Our focus in this paper has been mainly on the duality aspects of
gauge optimization. The structure particular to gauge optimization
allows for an alternative to the usual Lagrange duality, which may be
useful for providing new avenues of exploration for modeling and
algorithm development. Depending on the particular application, it may
prove computationally convenient or more efficient to use existing
algorithms to solve the gauge dual rather than the Lagrange dual
problem. For example, some variation of the projected subgradient
method might be used to exploit the relative simplicity of the gauge
dual constraints in \eqref{g-dual2}.  As with methods that solve the
Lagrange dual problem, a procedure would be needed to recover the
primal solution. Although this is difficult to do in general, for
specific problems it is possible to develop a primal-from-dual
recovery procedure via the optimality conditions.

More generally, an important question left unanswered is if there
exists a class of algorithms that can leverage this special
structure. We are intrigued by the possibility of developing a
primal-dual algorithm specific to the primal-dual gauge pair.

The sensitivity analysis presented in \S\ref{sec:sensitivity-analysis} relied on
existing results from Lagrange duality. We would prefer, however, to
develop a line of analysis that is self-contained and based entirely
on gauge duality theory and some form of ``gauge multipliers''.
In this regard, if we define the value function as
$\tilde v(b,\sigma) = \inf_x\set{\kappa(x) + \delta_{\epi\rho}(b-Ax,\sigma)}$, then $\tilde v$
 is a gauge. It is
conceivable that sensitivity analysis could be carried out based on
studying its polar, given by
\begin{equation*}
    \tilde v^\circ(y,\tau) = \inf\set{\mu\ge 0| (y,\tau)\in \mu \Dscr}
    = \kappa^\circ(A^*y) + \delta_{\rho^\circ(\cdot)\le \cdot}(y,-\tau),
\end{equation*}
where $\Dscr = \set{(y,\tau)|
  \kappa^\circ(A^*y)\le 1, \rho^\circ(y)\le -\tau}$. This formula follows from Proposition~\ref{sec2:prop1}(iv)
and a computation of $\tilde v^*$ similar to the one leading to
\eqref{sec4:conjv_pre}. This approach would be in contrast to the usual
sensitivity analysis, which is based on studying a certain (convex)
value function and its conjugate.

\section*{Acknowledgments}
The authors are grateful to two anonymous referees for their
exceptionally careful reading of this paper, and for many helplful
suggestions and corrections. In particular, we are grateful to the
referees for generalizing a previous version of
Example~\ref{example:conic-gauge}

\bibliographystyle{my-plainnat}
\bibliography{friedlander,master,Pong}

\providecommand\noopsort[2]{#2}\DeclareRobustCommand{\VAN}[3]{#2}\providecommand{\noopsort}[1]{}\DeclareRobustCommand{\VAN}[3]{#3}\providecommand{\noopsort}[1]{}
\begin{thebibliography}{22}
\providecommand{\natexlab}[1]{#1}
\providecommand{\url}[1]{\texttt{#1}}
\expandafter\ifx\csname urlstyle\endcsname\relax
  \providecommand{\doi}[1]{doi: #1}\else
  \providecommand{\doi}{doi: \begingroup \urlstyle{rm}\Url}\fi

\bibitem[Aravkin et~al.(2013)Aravkin, Burke, and
  Friedlander]{AravkinBurkeFriedlander:2013}
A.~Y. Aravkin, J.~Burke, and M.~P. Friedlander.
\newblock Variational properties of value functions.
\newblock \emph{{SIAM} J. Optim.}, 2013.
\newblock To appear.

\bibitem[Auslender and Teboulle(2003)]{AuT03}
A.~Auslender and M.~Teboulle.
\newblock \emph{Asymptotic Cones and Functions in Optimization and Variational
  Inequalities}.
\newblock Springer Monographs in Mathematics. Springer-Verlag, New York, 2003.

\bibitem[Bach(2011)]{bach2011learning}
F.~Bach.
\newblock Learning with submodular functions: A convex optimization
  perspective.
\newblock \emph{arXiv preprint arXiv:1111.6453}, 2011.

\bibitem[van~den Berg(2009)]{Berg:2009}
E.~van~den Berg.
\newblock \emph{Convex optimization for generalized sparse recovery}.
\newblock PhD thesis, University of British Columbia, December 2009.

\bibitem[van~den Berg and Friedlander(2011)]{BergFriedlander:2011}
E.~van~den Berg and M.~P. Friedlander.
\newblock Sparse optimization with least-squares constraints.
\newblock \emph{{SIAM} J. Optim.}, 21\penalty0 (4):\penalty0 1201--1229, 2011.

\bibitem[Berman(1973)]{Berman:1973}
A.~Berman.
\newblock \emph{Cones, Matrices and Mathematical Programming}, volume~79 of
  \emph{Lecture Notes in Economics and Mathematical Systems}.
\newblock Springer-Verlag, 1973.

\bibitem[Cand\'es et~al.(2008)Cand\'es, Wakin, and Boyd]{CandesWakinBoyd:2008}
E.~J. Cand\'es, M.~B. Wakin, and S.~P. Boyd.
\newblock Enhancing sparsity by reweighted {L1} minimization.
\newblock \emph{J. Fourier Anal. Appl.}, 14\penalty0 (5):\penalty0 877--905,
  2008.
\newblock \doi{10.1007/s00041-008-9045-x}.

\bibitem[Cand\`es et~al.(2012)Cand\`es, Strohmer, and
  Voroninski]{candes2012phaselift}
E.~J. Cand\`es, T.~Strohmer, and V.~Voroninski.
\newblock Phaselift: Exact and stable signal recovery from magnitude
  measurements via convex programming.
\newblock \emph{Commun. Pur. Appl. Ana.}, 2012.

\bibitem[Chandrasekaran et~al.(2012)Chandrasekaran, Recht, Parrilo, and
  Willsky]{Chan:2012}
V.~Chandrasekaran, B.~Recht, P.~Parrilo, and A.~Willsky.
\newblock The convex geometry of linear inverse problems.
\newblock \emph{Found. Comput. Math.}, 12\penalty0 (6):\penalty0 805--849,
  2012.
\newblock \doi{10.1007/s10208-012-9135-7}.

\bibitem[Freund(1987)]{freund:1987}
R.~M. Freund.
\newblock Dual gauge programs, with applications to quadratic programming and
  the minimum-norm problem.
\newblock \emph{Math. Program.}, 38\penalty0 (1):\penalty0 47--67, 1987.
\newblock \doi{10.1007/BF02591851}.

\bibitem[Friedlander et~al.(2012)Friedlander, Mansour, Saab, and
  Yilmaz]{FMSY:2012}
M.~P. Friedlander, H.~Mansour, R.~Saab, and O.~Yilmaz.
\newblock Recovering compressively sampled signals using partial support
  information.
\newblock \emph{IEEE Trans. Inform. Theory}, 58\penalty0 (2):\penalty0
  1122--1134, 2012.

\bibitem[Goemans and Williamson(1995)]{Goemans:1995}
M.~X. Goemans and D.~P. Williamson.
\newblock Improved approximation algorithms for maximum cut and satisfiability
  problems using semidefinite programming.
\newblock \emph{J. ACM}, 42\penalty0 (6):\penalty0 1115--1145, November 1995.
\newblock \doi{10.1145/227683.227684}.

\bibitem[Jaggi(2013)]{jaggi:2013}
M.~Jaggi.
\newblock Revisiting {Frank-Wolfe}: Projection-free sparse convex optimization.
\newblock In \emph{Proc. 30th Intern. Conf. Machine Learning (ICML-13)}, pages
  427--435, 2013.

\bibitem[Krishnan et~al.(2007)Krishnan, Lin, and Yip]{KLY:2007}
D.~Krishnan, P.~Lin, and A.~M. Yip.
\newblock A primal-dual active-set method for non-negativity constrained total
  variation deblurring problems.
\newblock \emph{{IEEE} Trans. Image Process.}, 16\penalty0 (11):\penalty0
  2766--2777, 2007.

\bibitem[Lov\`asz(1983)]{Lovasz:1983}
L.~Lov\`asz.
\newblock Submodular functions and convexity.
\newblock In A.~Bachem, B.~Korte, and M.~Grötschel, editors,
  \emph{Mathematical Programming The State of the Art}, pages 235--257.
  Springer, 1983.
\newblock \doi{10.1007/978-3-642-68874-4_10}.

\bibitem[McLinden(1978)]{McLinden:1978}
L.~McLinden.
\newblock Symmetric duality for structured convex programs.
\newblock \emph{Trans. Amer. Math. Soc.}, 245:\penalty0 147--181, 1978.

\bibitem[Ramana et~al.(1997)Ramana, Tun{\c{c}}el, and Wolkowicz]{ramana:1997}
M.~V. Ramana, L.~Tun{\c{c}}el, and H.~Wolkowicz.
\newblock Strong duality for semidefinite programming.
\newblock \emph{SIAM J. Optim.}, 7\penalty0 (3):\penalty0 641--662, 1997.
\newblock ISSN 1052-6234.
\newblock \doi{10.1137/S1052623495288350}.
\newblock URL \url{http://dx.doi.org/10.1137/S1052623495288350}.

\bibitem[Rockafellar(1970)]{Roc70}
R.~T. Rockafellar.
\newblock \emph{Convex Analysis}.
\newblock Princeton University Press, Princeton, 1970.

\bibitem[Rockafellar and Wets(1998)]{RoW98}
R.~T. Rockafellar and R.~J.-B. Wets.
\newblock \emph{Variational Analysis}.
\newblock Springer-Verlag, New York, 1998.

\bibitem[Ruys and Weddepohl(1979)]{RuysW79}
P.~H. Ruys and H.~M. Weddepohl.
\newblock Economic theory and duality.
\newblock In J.~Kriens, editor, \emph{Convex analysis and mathematical
  economics}, pages 1--72. Springer, Berlin, 1979.

\bibitem[Teuber et~al.(2013)Teuber, Steidl, and Chan]{TeuS:2013}
T.~Teuber, G.~Steidl, and R.~H. Chan.
\newblock Minimization and parameter estimation for seminorm regularization
  models with {I}-divergence constraints.
\newblock \emph{Inverse Probl.}, 29\penalty0 (3), 2013.

\bibitem[Tun\c{c}el and Wolkowicz(2012)]{TunWolk:2012}
L.~Tun\c{c}el and H.~Wolkowicz.
\newblock Strong duality and minimal representations for cone optimization.
\newblock \emph{Computational Optimization and Applications}, 53\penalty0
  (2):\penalty0 619--648, 2012.
\newblock ISSN 0926-6003.
\newblock \doi{10.1007/s10589-012-9480-0}.
\newblock URL \url{http://dx.doi.org/10.1007/s10589-012-9480-0}.

\end{thebibliography}

\end{document}